\DeclareMathOperator{\ord}{ord}
\newcommand{\bc}{\mathbf{c}}
\newcommand{\cA}{\mathcal{A}}
\newcommand{\cB}{\mathcal{B}}
\newcommand{\cC}{\mathcal{C}}
\newcommand{\cD}{\mathcal{D}}
\newcommand{\cE}{\mathcal{E}}
\newcommand{\cG}{\mathcal{G}}
\newcommand{\cH}{\mathcal{H}}
\newcommand{\cO}{\mathcal{O}}
\newcommand{\cX}{\mathcal{X}}
\newcommand{\cM}{\mathcal{M}}
\newcommand{\bbN}{\mathbb{N}}
\newcommand{\bbZ}{\mathbb{Z}}
\newcommand{\bbC}{\mathbb{C}}
\newcommand{\bbK}{\mathbb{K}}
\newcommand{\bbL}{\mathbb{L}}
\newcommand{\bbP}{\mathbb{P}}
\newcommand{\bbQ}{\mathbb{Q}}
\newcommand{\coC}{\mathbf{C}}
\newcommand{\coeff}{\operatorname{coeff}}
\newcommand{\centr}{\mathcal{Z}}
\newcommand{\rank}{{\rm rank}}
\newcommand{\gd}{{\rm GD}}
\newcommand{\hgd}{{\rm hGD}}
\newtheorem{thm}{Theorem}[section]
\newtheorem{theorem}[thm]{Theorem}
\newtheorem{proposition}[thm]{Proposition}
\newtheorem{lemma}[thm]{Lemma}
\newtheorem{cor}{Corollary}[thm]
\theoremstyle{definition}
\newtheorem{definition}[thm]{Definition}
\newtheorem{example}[thm]{Example}
\theoremstyle{remark}
\newtheorem{rem}[thm]{Remark}
\title{Computing centralizer of ODOs}
\title{Effective computation of centralizers of ODOs}
\author[a]{Antonio Jiménez-Pastor}
\author[a]{Sonia L. Rueda}
\affil[a]{Universidad Politécnica de Madrid}
\date{}
\begin{document}

\maketitle

\color{black}
\begin{abstract}
This work is devoted to computing the centralizer $\centr (L)$ of an ordinary differential operator (ODO) in the ring of differential operators.
Non-trivial centralizers are known to be coordinate rings of spectral curves and contain the ring of polynomials $\coC [L]$, with coefficients in the field of constants $\coC$ of $L$.

We give an algorithm to compute a basis of $\centr (L)$ as a $\coC [L]$-module. Our approach combines results by K. Goodearl in 1985 with solving the systems of equations of the stationary Gelfand-Dickey hierarchy, which after substituting the coefficients of $L$ become linear, and whose solution sets form a flag of constants. We are assuming that the coefficients of $L$ belong to a differential algebraic extension $K$ of $\coC$.
In addition, by considering parametric coefficients we develop an algorithm to generate families of ODOs with non trivial centralizer, in particular algebro-geometric, whose coefficients are solutions in $K$ of systems of the stationary GD hierarchy.
\end{abstract}

 \textsc{\href{https://zbmath.org/classification/}{MSC[2020]}:}  13N10, 13P15, 12H05

\textit{Keywords:}
Differential polynomial, filtration, flag of constants, computable differential field, level algebraic variety.


\section{Introduction}\label{sec:introduction}

The centralizer $\centr (L)$ of an ordinary differential operator (ODO) with coefficients in a differential field $\Sigma$ is a maximal commutative ring and has been of importance to differential operator theorists and ring theorists since the beginning of the twentieth century \cite{Good}. The ring $\coC [L]$ of polynomials in $L$, with coefficients in the field of constants $\coC$ of $L$ is contained in $\centr (L)$. In most cases, $\centr (L)$ coincides with $\coC [L]$, but of special interest are the cases where $\coC [L]\subsetneq \centr (L)$, and in this situation we will say that the centralizer is nontrivial \cite{MRZ2}, \cite{RZ2024}. If in addition $\centr (L)$ contains two operators of coprime orders then the operator $L$ is classically called algebro-geometric
\cite{Gri}, \cite{We}.

The correspondence between commutative rings of ordinary differential operators  and algebraic curves has been extensively and deeply studied since the seminal work by Burchnall and Chaundy in 1923 \cite{BC}, \cite{GGKM}, \cite{Krichever}, \cite{Mu2}, \cite{Zheglov}. We now see the possibility of making this correspondence computationally effective using differential algebra and symbolic computation \cite{Previato2019}, \cite{MRZ1}, \cite{MRZ2}, \cite{RZ2024}.
This paper originates from the need to generate families of differential operators with non-trivial centralizers,  for ODOs of arbitrary order \cite{R2025arx}.
It is a non trivial problem to determine if a given ODO
$L$ has a trivial centralizer and there are no algorithms for this task. The efforts concentrated in generating pairs of commuting operators of arbitrary rank, the greatest common divisor of their orders, \cite{DGU}, \cite{Mi1}, \cite{Mokhov_1990}, \cite{Og2018},\cite{PZ}, but the computation of the whole centralizer was not considered.

\medskip

The ultimate purpose of this paper is to design algorithms that can be implemented in a computer algebra system, for two main goals:
\begin{enumerate}[label=(G\arabic*)]
    \item\label{goal:1} Compute a finite set of generators of the centralizer of $L$ in the ring $\cD$ of ordinary differential operators  with coefficients in $\Sigma$
\[\centr (L)=\{A\in \cD\mid [L,A]=0\}.\]

    \item\label{goal:2} Generate families of ODOs with non trivial centralizers for operators $L$ of arbitrary order.
\end{enumerate}

We combine results on $\centr (L)$ that were developed throughout the twentieth century and have a long history.
On one hand one can consider a larger centralizer $\centr ((L))$, the ring of pseudo-differential operators that commute with $L$, and use it to compute $\centr (L)$ since
\[\centr (L)=\centr ((L))\cap \cD.\]
A result first stated in 1904 by I. Schur  \cite{Sch}, evolved to say that for an operator of order $n$, the $n$-th root $Q$ of $L$ determines $\centr ((L))$, since the powers $\{Q^j\}$ form an infinite basis as a $\coC$-vector space \cite{Good}.
In 1985, G. Wilson \cite{Wilson}
defined the set of differential operators that almost commute with $L$. In fact this is the set $\centr((L))_+$ of all positive parts of the operators in $\centr ((L))$, the $\coC$-vector space with infinite basis $\{(Q^j)_+\}$ of almost commuting operators \cite{JRZHD2025}.

\medskip

For a formal differential operator
$L_n$ of order $n$ in normal form (the term of order $n-1$ is zero), whose coefficients are differential variables over $\coC$,
the computation of the basis of almost commuting operators and of the systems of differential polynomials of the Gelfand-Dickey (GD) hierarchies \cite{Dikii} (that include KdV, Boussinesq, KN,...) was algorithmically described in \cite{JRZHD2025} and implemented in the computer algebra system SAGE. For each positive integer $m$, from the commutator of $L_n$ with an almost commuting operator of order $m$, the system $\gd_{n,m}$ of the stationary GD hierarchy is obtained.

\medskip

On the other hand, it was proven by K. Goodearl in 1985 \cite{Good} that $\centr (L)$ has a finite basis $\{1, A_1,\ldots ,A_{d-1}\}$ as a $\coC [L]$-module. Consequently
\[\centr (L)=\coC [L, A_1,\ldots ,A_{d-1}].\]
We combine this result with solving
systems of equations of the stationary GD hierarchy to achieve \ref{goal:1}.
The departure assumptions are that the coefficients of $L$ belong to a differential algebraic extension $\coC \langle \eta \rangle$ of $\coC$ and that the level $M$ of $L$ is known, being $M$ the smallest order of the ODOs in $\centr (L)\backslash \coC [L]$.  After evaluating $\gd_{n,m}$ in the coefficients of $L$, these systems become systems of linear equations with coefficients in $\Sigma=\coC \langle \eta \rangle$, and their sets of constant solutions can be computed.

Considering parametric coefficients in a differential field extension $\bbC(\Theta)\langle \eta \rangle$ of the field of complex numbers $\bbC$, determined by a finite set of parameters $\Theta$, we compute families of algebro-geometric ODOs and achieve \ref{goal:2}. In other words, we compute families of solutions of the systems $\gd_{n,M}$ of the stationary GD hierarchy in $\coC\langle\eta\rangle$, fixing $\eta$  equal to $x$, $\cosh (x)$ and the Weierstrass $\wp$ function, with derivation $d/dx$. For any fixed values of $n$ and $M$, we design an algorithm to describe the so called $M$-th level variety, as a zero set of a polynomial ideal in $\coC [\Theta]$. The problem now becomes solving a nonlinear system of polynomial equations in $\coC[\Theta]$. Our experimental results include  families of algebro-geometric operators of orders $n=3$ and $n=5$, furthermore they allow to detect new theoretical and computational challanges related with the generation of solutions of $\gd_{n,M}$ that we leave as a conjecture in Section \ref{sec-levelvar}: If $(n,M)=1$ finite families of algebro-geometric ODOs can be computed, otherwise infinite families of ODOs with centralizer of rank $(n,M)$ are determined.

\medskip

{\bf The paper is organized as follows.} In Section \ref{sec-centralizer} we describe a basis of $\centr (L)$ as a $\coC [L]$-module and the rank of   $\centr (L)$ in terms of the cyclic group of orders of the ODOs in  $\centr (L)$. We extend K. Goodearl's theorem to describe a $\coC [L]$-basis of the maximal submodule $\cM_{\rho}$ of $\centr (L)$ of rank $\rho=(n,M)$, in Theorem \ref{thm-Maxbasis}. Section \ref{sec-AlmostCommuting} is devoted to presenting the basis of almost commuting operators and the systems $\gd_{n,m}$ of the stationary Gelfand-Dickey hierarchy.

In Section \ref{sec:gradedbasis} we define an specialization map $\cE_L$ to evaluate the systems of the GD hierarchy at the coefficients of $L$ obtaining systems $\gd_{n,m}(L)$ of linear equations with coefficients in $\Sigma$. We describe their sets of constant solutions, that form a flag of $\coC$-vector spaces. We use the natural filtration of $\centr (L)$ defined by the order of ODOs to optimize the search algorithm for the so called filtered basis of $\centr (L)$ as $\coC[L]$-module, that corresponds to unique solutions of optimized systems of the GD hierarchy. The main algorithm of this paper is Algorithm \ref{alg:graded_basis}, \texttt{Filtered\_basis} whose input is $L$ and the level $M$ of its centralizer, and output is the filtered basis of $\centr (L)$ if $(n,M)=1$, or $\cM_{\rho}$ in general.

In Section \ref{sec:computable} we restrict to computable differential fields $\coC\langle \eta\rangle$. The sets of constant solutions of the linear system $\gd_{n,m}(L)$ with coefficients in $\coC\langle \eta\rangle$ are computed by building an extended linear system with coefficients in $\coC$. As a consequence we design Algorithm \ref{alg:goodearl_basis},  \texttt{Filtered\_basis\_computable}.

Sections \ref{sec-levelvar} and \ref{sec:families} are devoted to \ref{goal:2}. We include a set of parameters $\Theta$ in the coefficient field $\coC(\Theta)\langle \eta\rangle$ of a differential operator $\bbL$. We fix a level $M$ and force $\bbL$ to have level $M$, obtaining level varieties, as cero sets of level ideals generated by polynomials in $\coC[\Theta]$, in Algorithm \ref{alg:get_ideal},  \texttt{Level\_Variety}. We obtain families $\bbL_M$ of algebro-geometric ODOs of level $M$ in Section \ref{sec:families}, with coefficients in $\coC\langle \eta\rangle$ for $\eta$ equal to $x$, $\cosh (x)$ and the Weierstrass $\wp$ function, with derivation $d/dx$. In Section \ref{sec:implementation}, we include implementation details.

\section{Preliminaries}

Let $\bbN$ be the set of positive integers including zero. Denote by $\bbZ_n$ the cyclic group $\bbZ/n\bbZ$ and by $[m]_n$ the class $m+n\bbZ$. We will write $m_1\equiv_n m_2$ to indicate $m_1-m_2\in n\bbZ$.

\medskip

For concepts in differential algebra, related to differential polynomials, we refer to \cite{Kolchin, Ritt}.  A differential ring $(R, \partial )$ is a ring $R$ (assumed associative, with identity) equipped with a specified
derivation $\partial$. The elements $r\in R$ such that $\partial(r) = 0$ are called constants of $R$, and the
collection of these constants forms a subring of $R$. A differential field is a differential ring that is a field. Given the field of complex numbers $\bbC$, we have differential field extensions $\bbC(x)$ or $\bbC\langle\cosh(x)\rangle$, for the derivation $d/dx$.

Given a differential variable $y$ with respect to $R$, we define $y^{(i)}$ as $\partial^i (y)$, for $y\in\bbN$, $i\geq 1$ and $y^{(0)}$ as $y$. The ring of differential polynomials over $R$ in the differential variable $y$ is the polynomial ring
\[R\{y\} =  R[y^{(i)}\mid i \in \bbN].\]
The notation $y'$, $y''$, $y'''$ will also be used for $y^{(i)}$, $i=1,2,3$.
Observe that~$R\{y\}$ is also a differential ring with the natural extended derivation, that we also call~$\partial$ abusing notation.
We can iterate this construction, adding several differential variables.

\medskip

Let us denote by $R((\partial^{-1}))$ the ring of pseudo-differential operators in $\partial$ with coefficients in the differential ring $R$, defined as in~\cite{Good}, see also \cite{Olver} and \cite{Dikii},
\[
R((\partial^{-1}))=\left\{\sum_{-\infty< i\leq n} a_i\partial^i\mid a_i\in  R , n\in\bbZ\right\},
\]
where $\partial^{-1}$ is the inverse of $\partial$ in $R((\partial^{-1}))$, $\partial^{-1}\partial=\partial\partial^{-1}=1$.

Given $A\in R((\partial^{-1}))$, with $a_n\neq 0$ then $n$ is called the \emph{order} of $A$, denoted by $\ord(A)$ and $a_n$ is called the \emph{leading coefficient} of $A$. By convention $\ord (0)=-\infty$.
The \emph{positive part} of $A$ is the differential operator
\begin{equation}
   A_+= \sum_{i=0}^n a_i\partial^i\in R[\partial] \ \textrm{ if } n\geq 0 \ , \ \textrm{ and } A_+=0 \ \textrm{ otherwise  }.
\end{equation}
As it is usual, we denote by $A_- = A - A_+$. Given $A,B\in R((\partial^{-1}))$ of orders $m$ and $n$ respectively, it is well known that their commutator $[A,B]=AB-BA$ has order $ \leq n+m-1$.

\medskip

For any differential ring $(R,\partial)$, we denote by $R[\partial]$ the non-commutative ring of linear differential operators. Observe that \[R[\partial]=\{A \in R((\partial^{-1}))\ |\ A_- = 0\}.\] 

Any nonzero operator $L\in R [\partial ]$ can be uniquely written in the form
\begin{equation}\label{eq-L-general}
    L= a_0 +a_1 \partial + \cdots +a_n \partial^n , \textrm{with } a_i \in R \ \textrm{and } a_n \not=0 .
\end{equation}
If $a_n=1$ then $L$ is called {\sf monic}. A monic operator with $a_{n-1}=0$ is said to be in {\sf normal form}.
For nonzero $Q_1 , \dots , Q_\ell \in R[\partial ]$, the following inequality holds:
 \begin{equation}\label{eq-ord-max}
        \ord (Q_1 +\cdots +Q_\ell )\leq \max \{ \ord (Q_i ) \ | \ 1\leq i \leq \ell \},
    \end{equation}
and assuming $\ord (Q_i ) \not= \ord (Q_j )$,  whenever $i\not=j$,
    \begin{equation}\label{eq-ord-null}
         Q_1 +\cdots +Q_\ell =0  \Rightarrow Q_i =0,\quad   \forall i.
    \end{equation}

\section{Centralizers of ODOs}\label{sec-centralizer}

Given a monic differential operator $L$ in $R[\partial]$, let us denote by $\centr((L))$ its centralizer in $R((\partial^{-1}))$
\begin{equation}
    \centr((L))=\{A\in R((\partial^{-1}))\mid [L,A]=0\}.
\end{equation}
By a famous theorem of I. Schur in \cite{Sch}, stated for analytic coefficients, if $\ord(L)=n$ then $\centr((L))$ is determined by the unique monic $n$-th root $L^{1/n}$ of $L$.
For coefficients in a differential ring $R$, whose ring of constants is a field $\coC$, this result was proved in \cite[Theorem~3.1]{Good}
\begin{equation}\label{eq-SchurThm}
    \centr((L))=\left\{ \sum_{-\infty<j\leq m} c_j (L^{1/n})^j\mid c_j\in {\bf C}, m\in\bbZ\right\}.
\end{equation}
As a consequence, $\centr((L))$ is a commutative ring. To review the long history of this theorem, see ~\cite[Sections~3 and~4]{Good}.

\medskip

We say that $A \in R[\partial]$ {\sf almost commutes with $L$} if $\ord([L,A]) \leq n-2$, see \cite{Wilson1985} and \cite{JRZHD2025}. We denote by $W(L)$ the set of all differential operators that almost commute with $L$. Assuming that $L$ is monic,  it was proved in \cite{JRZHD2025}, Theorem 3.5, that
\begin{equation}\label{eq-WL}
   W(L)=\centr ((L))_+
\end{equation}
the set of the positive parts of the pseudo-differential operators in $\centr ((L))$ and
\begin{equation}\label{eq:wilson_basis}
    \cB(L)=\{B_j=(L^{j/n})_+\mid  j\in \bbN\}
\end{equation}
is a basis of $W(L)$ as a $\coC$-vector space.

\subsection{In differential operator rings}

In this paper, we will compute centralizers of differential operators whose coefficients belong to a differential field $(\Sigma,\partial)$, whose field of constants $\coC$ has zero characteristic. Given $L\in \Sigma [\partial]$, the centralizer of $L$ in $\Sigma [\partial]$
\[\centr (L)=\{A\in \Sigma[\partial]\mid [L,A]=0\}.\]
By \cite{Good}, Corollary 4.4, $\centr (L)$ is commutative, since
\begin{equation}
\centr (L)=\centr ((L))\cap \Sigma [\partial].
\end{equation}

\medskip

Our algorithm to compute $\centr (L)$ relies on the important fact that $\centr (L)$ is $\coC$-vector subspace of $W(L)$ by $\eqref{eq-WL}$,
\begin{equation}\label{eq-Schur2}
    \centr (L) \subset W(L).
\end{equation}
There is a natural filtration of $W(L)$  determined by the order of an operator, namely $W(L)=\cup_{m\in \bbN} W_m(L)$ where $W_m(L)$ is the vector subspace of almost commuting operators with order less than or equal to m.
The centralizer $\centr (L)$ inherits this filtration
\begin{equation}\label{eq-filtration}
    \centr (L)=\cup_{m\in \bbN} \centr_m (L),\,\,\, \centr_m (L)=W_m(L)\cap \centr (L).
\end{equation}

Observe that $\centr (L)$ contains $\coC [L]$, the ring of all polynomials in $L$ with coefficients in $\coC$.

\begin{definition}\label{def-level}
Let us consider $L\in \Sigma [\partial]\backslash \coC[\partial]$ of order $n$:
\begin{enumerate}
\item  We say that $\centr (L)$ is {\sf trivial} if $\centr (L)=\coC[L]$ and  {\sf trivial at level $m$} if $\centr_m (L)\subset \coC[L]$.

\item We define the {\sf level} of $L$ to be the smallest $M$ such that $\centr(L)$ is not trivial at level $M$, that is $\centr_M (L)$ is not included in $\coC [L]$.
\end{enumerate}
\end{definition}

\medskip

It is important to note that  $\centr (L)$ is a domain, which immediately follows from 1 in the next lemma, proved in \cite{Good}, Lemma 1.1.
\begin{lemma}\label{lem-PQ}
    Given nonzero operators $P,Q\in\centr(L)$, then:
    \begin{enumerate}
        \item $P\cdot Q\neq 0$ and $\ord(PQ)=\ord(P)+\ord(Q)$.

        \item If $\ord(P)=\ord(Q)=m$ then there exists a nonzero $\alpha\in\Sigma$ such that $Q-\alpha P$ has order strictly smaller than $m$.
    \end{enumerate}
\end{lemma}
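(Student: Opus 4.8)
The plan is to reduce both statements to the elementary behaviour of leading terms under composition of differential operators over the differential field $\Sigma$. Note that the only property of $P,Q$ genuinely used is that they are nonzero elements of $\Sigma[\partial]$; membership in $\centr(L)$ plays no role. The essential input is that $\Sigma$ is a field, hence an integral domain with no zero divisors, so that a product of nonzero leading coefficients cannot vanish.

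For part 1, I would first record the generalized Leibniz rule in $\Sigma[\partial]$: for any $a\in\Sigma$ and $k\in\bbN$,
\[\partial^k\, a=\sum_{i=0}^{k}\binom{k}{i}a^{(i)}\partial^{k-i},\]
which follows by induction from $\partial a=a\partial+a'$. Writing $P=p_a\partial^a+(\text{lower order})$ and $Q=q_b\partial^b+(\text{lower order})$ with $p_a,q_b\neq 0$, where $a=\ord(P)$ and $b=\ord(Q)$, the only contribution of order $a+b$ in the product $PQ$ comes from $p_a\partial^a\cdot q_b\partial^b$, and by the Leibniz rule this equals $p_aq_b\partial^{a+b}+(\text{lower order})$. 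Every other pairing of terms of $P$ and $Q$ produces something of order strictly below $a+b$. Hence the leading coefficient of $PQ$ is $p_aq_b$. Since $\Sigma$ is a field and $p_a,q_b\neq 0$, we get $p_aq_b\neq 0$, so $PQ\neq 0$ and $\ord(PQ)=a+b=\ord(P)+\ord(Q)$.

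For part 2, with $\ord(P)=\ord(Q)=m$, I would write $P=p_m\partial^m+(\text{lower order})$ and $Q=q_m\partial^m+(\text{lower order})$ with $p_m,q_m\neq 0$. Since $\Sigma$ is a field I may set $\alpha=q_mp_m^{-1}\in\Sigma$, which is nonzero because $q_m\neq 0$. Then the coefficient of $\partial^m$ in $Q-\alpha P$ is $q_m-\alpha p_m=q_m-q_mp_m^{-1}p_m=0$, so all terms of order $m$ cancel and $\ord(Q-\alpha P)<m$, as required.

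There is no real obstacle here: the whole content is the leading-term formula for a composition, together with the absence of zero divisors in $\Sigma$, which guarantees that the product $p_aq_b$ of the two leading coefficients cannot be zero. The only step demanding any attention is verifying, via the Leibniz rule, that no lower-order terms of $P$ or $Q$ can contribute to the top degree $a+b$ in part 1; once this is checked, both conclusions follow immediately.
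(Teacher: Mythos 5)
Your proof is correct for the statement as written, but it takes a genuinely different route from the paper: the paper offers no argument at all, instead citing Lemma~1.1 of \cite{Good}. Goodearl's lemma is proved in the more general setting of a differential ring $R$ whose constants form a field, where $R$ need not be a domain; there, membership in the centralizer of a monic $L$ is essential even for part~1, since what rescues products of leading coefficients from vanishing is that leading coefficients of operators commuting with $L$ are \emph{invertible constants}. In the present paper $\Sigma$ is a field, so your elementary leading-term computation is a valid and more self-contained substitute, and it correctly isolates the only point needing care (no lower-order terms reach degree $a+b$).

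One caveat: your remark that ``membership in $\centr(L)$ plays no role'' holds only for the literal statement with $\alpha\in\Sigma$. When the paper later applies part~2 --- in the proofs of Lemma~\ref{lem-CL} and Theorem~\ref{thm-Maxbasis} --- it takes $\alpha$ to be a nonzero element of $\coC$, and that refinement genuinely uses centralizer membership: for $Q=q_m\partial^m+\cdots\in\centr(L)$ with $L$ monic of order $n\geq 1$, the coefficient of $\partial^{m+n-1}$ in $[L,Q]$ is $n\,\partial(q_m)$, so $[L,Q]=0$ forces $q_m\in\coC$; then for $P,Q\in\centr(L)$ of equal order the ratio $\alpha=q_mp_m^{-1}$ lies in $\coC$ because the constants form a field. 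Your argument does not deliver this constancy, so if one adopts your proof, the downstream invocations of the lemma require this extra observation --- which is precisely the content of Goodearl's original lemma that the citation is carrying.
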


We state next K. Goodearl's \cite[Theorem~1.2]{Good} in the context that will be used in this paper, for ODOs in $\Sigma [\partial]$. This result describes a basis of $\centr (L)$ as a $\coC [L]$-module. The proof was reviewed in \cite[Appendix~A]{RZ2024} and in this occasion we decompose the proof to extract further conclusions.

\begin{lemma}\label{lem-CL}
    Let us consider $L\in \Sigma [\partial]$ of order $n$. Then
    \begin{equation}
        \coC[L]=\{Q\in \centr (L)\mid \ord (Q)\in [0]_n\}.
    \end{equation}
\end{lemma}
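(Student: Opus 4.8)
The plan is to prove the two inclusions separately; the forward one is routine and the reverse one carries all the content.

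For $\coC[L]\subseteq\{Q\in\centr(L)\mid\ord(Q)\in[0]_n\}$, note first that every polynomial in $L$ commutes with $L$, so only the order has to be checked. Writing $Q=\sum_{i=0}^{k}c_iL^i$ with $c_i\in\coC$ and $c_k\neq0$, and using that $L$ is monic of order $n$ so that each $L^i$ is monic of order $in$, the summands have pairwise distinct orders; by \eqref{eq-ord-null} the leading term cannot be cancelled, so $\ord(Q)=kn\in[0]_n$.

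For the reverse inclusion I would induct on $\ord(Q)=kn$. The engine is the fact that \emph{the leading coefficient of any element of $\centr(L)$ is a constant}: if $\ord(Q)=m$ and $\alpha=\lc(Q)$, expanding $[L,Q]=LQ-QL$ shows that the coefficient of $\partial^{\,n+m-1}$ in $[L,Q]$ equals $n\alpha'$ — the terms involving the subleading coefficient of $L$ cancel between $LQ$ and $QL$ — so $[L,Q]=0$ together with $\operatorname{char}\coC=0$ forces $\alpha'=0$, i.e. $\alpha\in\coC$. The base case $k=0$ is then immediate, since an order-zero operator equals its (constant) leading coefficient and hence lies in $\coC\subseteq\coC[L]$. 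For $k\geq1$ the constant $\alpha=\lc(Q)$ is nonzero and $L^k$ is monic of order $kn$, so $Q-\alpha L^k\in\centr(L)$ has order strictly below $kn$; this is precisely the reduction of Lemma \ref{lem-PQ}(2), now with the extra information that the multiplier is a constant. Applying the inductive hypothesis to $Q-\alpha L^k$ and adding back $\alpha L^k\in\coC[L]$ would yield $Q\in\coC[L]$.

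The step I expect to be the main obstacle is closing this induction. Lemma \ref{lem-PQ}(2) only guarantees that $\ord(Q-\alpha L^k)<kn$, not that it is again a multiple of $n$, so the reduced operator may sit in an intermediate residue class modulo $n$ to which the inductive hypothesis does not directly apply. To control this I would instead work through Schur's description \eqref{eq-SchurThm}: write $Q=\sum_{j\leq kn}c_j\,(L^{1/n})^j$ with $c_j\in\coC$, observe that $\coC[L]$ is exactly the span of the $(L^{1/n})^j$ with $n\mid j$, and reduce the claim to showing that the coefficients $c_j$ with $n\nmid j$ vanish whenever $Q$ is a genuine operator of order $\equiv0\pmod n$. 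Proving that vanishing — equivalently, that no component in the intermediate residue classes can survive — is the crux, and it is here that one must use more than the order filtration of $\centr(L)$ alone.
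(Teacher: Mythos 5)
Your diagnosis of where the difficulty lies is exactly right, and the gap you flag is genuine --- but it cannot be closed, because the statement itself is false whenever $\centr(L)\neq\coC[L]$. Pick any $A\in\centr(L)\setminus\coC[L]$. If $\ord(A)\in n\bbZ$, then $A$ already lies in the right-hand side but not in $\coC[L]$; otherwise choose $k$ with $kn>\ord(A)$ and set $Q=L^k+A$. Then $Q\in\centr(L)$ and $\ord(Q)=kn\in[0]_n$, yet $Q\notin\coC[L]$, since otherwise $A=Q-L^k\in\coC[L]$. The paper's own Example \ref{exm:centralizer:3:4:hyperbolic} gives a concrete instance: for $L=\partial^3+6\cosh(x)^{-2}\partial$ and $G_1^*$ of order $4$, the operator $Q=L^2+G_1^*$ has order $6\in[0]_3$ and commutes with $L$ but is not a polynomial in $L$. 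For the same reason, the Schur-coefficient vanishing you propose to reduce to --- $c_j=0$ for $n\nmid j$ whenever $Q\in\centr(L)$ has order in $[0]_n$ --- is equivalent to the lemma and fails on the same $Q$: its expansion \eqref{eq-SchurThm} contains that of $A$, which must involve a nonzero $c_j$ with $n\nmid j$ (otherwise $A$ would itself be a polynomial in $L$). So no argument can supply the ``crux'' you leave open; what is needed is an extra hypothesis, e.g.\ that the group of orders $\cO$ of Lemma \ref{lem-O} is trivial, in which case your induction closes verbatim because every reduction then stays in the class $[0]_n$.

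It is worth knowing that the paper's proof makes precisely the move you declined to make: it cites Lemma \ref{lem-PQ} to assert that $Q-\alpha L^{m/n}$ ``has order divisible by $n$ and smaller than $m$,'' but Lemma \ref{lem-PQ}(2) controls only the drop in order, not the residue class of the new order; this unjustified clause is exactly your obstacle, and it is where the paper's induction breaks. (Your attempt is in fact more careful on a second point as well: Lemma \ref{lem-PQ}(2) only gives $\alpha\in\Sigma$, and your computation that $\lc(Q)$ is constant is the missing justification for the paper's ``$\alpha\in\coC$.'') What survives, and what the paper actually needs later, is only the reduction step: an element of $\centr(L)$ of order in $n\bbZ$ can be reduced, modulo a constant multiple of a power of $L$, to an element of strictly smaller but otherwise arbitrary order; in Theorems \ref{thm-cenbasis} and \ref{thm-Maxbasis} that drop is absorbed by a strong induction running over all residue classes at once, with the other basis elements $G_k$ handling the classes that $[0]_n$ does not cover. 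If you want a correct statement in the spirit of this lemma, prove instead: if every nonzero element of $\centr(L)$ has order in $n\bbZ$, then $\centr(L)=\coC[L]$ --- there your argument goes through unchanged.
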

\begin{proof}
Let $\upsilon$ be the leading coefficient of $L$.
Given $Q\in\centr (L)$, with order $m \in n\bbZ$, let us use induction on $m$.
If $m=0$, $Q=\sigma\in \Sigma$, the commutator $LQ-QL$ is a  differential operator of order $n-1$, whose leading coefficient $n \upsilon \partial(\sigma)$ must be zero in $\Sigma$.
Thus $Q=\sigma\in \coC$.
If $m>0$, by Lemma \ref{lem-PQ} there exists a nonzero $\alpha\in \coC$ such that $Q-\alpha L^{m/n}$ has order divisible by $n$ and smaller than $m$. By induction hypothesis $Q-\alpha L^{m/n}$ belongs to $\coC [L]$,  and so does $Q$.
\end{proof}

\begin{lemma}\label{lem-li}
    Any finite family $\{G_k\}$ of differential operators in $\centr(L)\backslash \coC [L]$, with $\ord (G_i)\not\equiv_n \ord (G_j)$, for $i\neq j$, is $\coC [L]$-linearly independent.
\end{lemma}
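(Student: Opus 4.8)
The plan is to prove linear independence over $\coC[L]$ by a degree/order argument, exploiting the fact that elements of $\coC[L]$ have order in $[0]_n$ (Lemma \ref{lem-CL}) while the generators $G_k$ are chosen to lie in distinct residue classes modulo $n$. Suppose, for contradiction, that we have a nontrivial relation
\begin{equation}\label{eq-relation}
    \sum_{k} p_k(L)\, G_k = 0, \qquad p_k(L)\in\coC[L],
\end{equation}
with not all $p_k(L)$ equal to zero. Since $\centr(L)$ is a domain (Lemma \ref{lem-PQ}, part 1), each nonzero product $p_k(L)\,G_k$ is itself a nonzero element of $\centr(L)$, and its order is $\ord(p_k(L))+\ord(G_k)$.

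First I would analyze the order of each term $p_k(L)\,G_k$ modulo $n$. Because $p_k(L)$ is a polynomial in $L$ with constant coefficients, its order is a multiple of $n$ whenever it is nonzero; this is precisely the content of Lemma \ref{lem-CL}, which identifies $\coC[L]$ with the operators in $\centr(L)$ of order in $[0]_n$. Hence $\ord(p_k(L)\,G_k)\equiv_n \ord(G_k)$ for every index $k$ with $p_k(L)\neq 0$. By the hypothesis that the $\ord(G_i)$ lie in pairwise distinct classes modulo $n$, the nonzero terms in \eqref{eq-relation} all have orders lying in distinct residue classes modulo $n$, and in particular these orders are pairwise distinct as integers.

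The key step is then to invoke \eqref{eq-ord-null}: a sum of operators with pairwise distinct orders can only vanish if each summand vanishes. Applying this to the nonzero terms of \eqref{eq-relation} forces $p_k(L)\,G_k = 0$ for every $k$. Since $\centr(L)$ is a domain and each $G_k\neq 0$, we conclude $p_k(L)=0$ for all $k$, contradicting the assumption that the relation was nontrivial. Therefore the family $\{G_k\}$ is $\coC[L]$-linearly independent.

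I expect the only genuine subtlety to be the bookkeeping at the level of residue classes: one must verify that distinct classes of $\ord(G_k)$ together with the multiple-of-$n$ shift coming from $\ord(p_k(L))$ really do yield pairwise distinct integer orders among the surviving terms, so that \eqref{eq-ord-null} applies cleanly. This is immediate from $\ord(p_k(L)\,G_k)\equiv_n\ord(G_k)$, but it is the hinge of the argument and deserves to be stated explicitly. Everything else reduces to the domain property and the order-additivity already recorded in Lemma \ref{lem-PQ}.
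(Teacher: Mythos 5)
Your proof is correct and takes essentially the same route as the paper's: both apply \eqref{eq-ord-null} to the summands $p_k(L)G_k$ after observing that $\ord(p_k(L)G_k)\equiv_n\ord(G_k)$ places the nonzero terms at pairwise distinct orders, and both then use the domain property of $\centr(L)$ to conclude $p_k(L)=0$. The only difference is presentational: you argue by contradiction and spell out the residue-class bookkeeping that the paper's two-line proof leaves implicit.
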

\begin{proof}
For a finite sum $\sum_k p_k(L) G_k $, $p_k(L)\in \coC[L]$ to be zero, it is necessary that all summands are zero, due to \eqref{eq-ord-null} applied to $p_k(L) G_k$. But then $p_k(L) =0$, since $\centr (L)$ is a domain. Consequently $\{G_k\}$ is a finite family of $\coC[L]$-linearly independent differential operators
\end{proof}

\begin{definition}
Consider $L \in \Sigma [\partial]$ of order $n$.
    Given $G\in \centr (L)$ we say that $G$ is {\sf order minimal in $\centr (L)$} if $\ord(G)$ is minimal with respect to the orders of all operators $Q$ in $\centr (L)$ with $\ord(Q)\equiv_n \ord(G)$.
\end{definition}

\begin{lemma}\label{lem-O}
     Given a differential operator $L \in \Sigma [\partial]$ of order $n$, then the set
     \begin{equation}\label{eq-O}
       \cO=\{[\ord(Q)]_n\mid Q\in\centr(L), Q\neq 0\}
     \end{equation}
is a cyclic subgroup of $\bbZ_n$, whose order $d=|\cO|$ divides $n$. We call $\cO$ {\sf the group of orders of $\centr (L)$}.
\end{lemma}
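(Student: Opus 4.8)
The plan is to show that $\cO$ is closed under the group operation of $\bbZ_n$ and contains inverses, which together with nonemptiness establishes it as a subgroup; the classification of subgroups of the cyclic group $\bbZ_n$ then immediately gives that $\cO$ is cyclic of order $d \mid n$. First I would note that $\cO$ is nonempty, since $L \in \centr(L)$ contributes the class $[n]_n = [0]_n$, so the identity lies in $\cO$.

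For closure, suppose $[\ord(P)]_n, [\ord(Q)]_n \in \cO$ for nonzero $P, Q \in \centr(L)$. Since $\centr(L)$ is commutative and closed under products (being a ring, and in particular a domain by Lemma~\ref{lem-PQ}), the product $PQ$ lies in $\centr(L)$ and is nonzero. By part~1 of Lemma~\ref{lem-PQ} we have $\ord(PQ) = \ord(P) + \ord(Q)$, hence
\[
[\ord(PQ)]_n = [\ord(P)]_n + [\ord(Q)]_n \in \cO,
\]
which gives closure under addition in $\bbZ_n$. This is the crux of the argument: the multiplicativity of the order on the domain $\centr(L)$ is exactly what translates the ring structure into the additive group structure on $\bbZ_n$.

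For inverses, the cleanest route is to observe that $\cO$ is a nonempty subset of the \emph{finite} group $\bbZ_n$ that is closed under addition; any such subset is automatically a subgroup, since closure forces each element's cyclic subgroup to lie inside $\cO$, and finite monoids inside a group contain inverses. Concretely, given $[\ord(P)]_n \in \cO$, the powers $P, P^2, P^3, \dots$ all lie in $\centr(L)$, so the classes $k\,[\ord(P)]_n$ for $k \geq 1$ all lie in $\cO$; since $\bbZ_n$ is finite these classes repeat, and the element preceding a repetition supplies the additive inverse of $[\ord(P)]_n$. Thus $\cO$ is a subgroup of $\bbZ_n$.

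Finally, every subgroup of the cyclic group $\bbZ_n$ is itself cyclic and has order dividing $n$ by Lagrange's theorem, so setting $d = |\cO|$ we obtain $d \mid n$ and $\cO$ cyclic, as claimed. I expect no serious obstacle here; the only point requiring care is invoking part~1 of Lemma~\ref{lem-PQ} to justify that products stay nonzero and that orders add, which is precisely where the hypothesis that $\centr(L)$ is a domain does its work. The subgroup axioms then follow formally.
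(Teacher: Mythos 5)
Your proof is correct and follows essentially the same route as the paper: closure of $\cO$ under addition via part~1 of Lemma~\ref{lem-PQ}, and then the standard fact that a nonempty additively closed subset of the finite cyclic group $\bbZ_n$ is a (cyclic) subgroup of order dividing $n$. You simply spell out the details (nonemptiness via $L$ itself, and the finiteness argument producing inverses) that the paper compresses into an appeal to elementary group theory.
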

\begin{proof}
The set of orders $\{\ord(Q)\mid Q\in\centr(L), Q\neq 0\}$ is closed under addition by Lemma \ref{lem-PQ}, 1. Therefore $\cO$ is an additively closed subgroup of the cyclic group $\bbZ_n$. By any introduction to group theory, $\cO$ is a cyclic subgroup of $\bbZ_n$ and its order $d$ is a divisor of $n$.
\end{proof}

Observe that having a non-trivial centralizer $\centr (L)\neq \coC [L]$
is equivalent to having a non-trivial group of orders $\cO\neq \{[0]_n\}$ .

\begin{theorem}[Goodearl's Theorem]\label{thm-cenbasis}
Let us consider $L \in \Sigma [\partial]$ of order $n$ and the group $\cO$ of orders of $\centr (L)$, as in \eqref{eq-O}.
Then a basis of $\centr (L)$ as a $\coC[L]$-module consists of $d=|\cO|$ operators
   \begin{equation}
     \cG(L):=\{ G_0=1, G_1,\ldots,G_{d-1}\}
   \end{equation}
satisfying the following conditions:
\begin{enumerate}
 \item Each $G_k$ is order minimal in $ \centr (L)$.

\item $\cO=\{[\ord(G_k)]_n\mid k=0,1,\ldots ,d-1\}$.
\end{enumerate}
\end{theorem}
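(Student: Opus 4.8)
The plan is to construct the family $\cG(L)$ explicitly, verify conditions 1 and 2 by construction, and then establish the two defining properties of a module basis, namely $\coC[L]$-linear independence and spanning. To build the family, recall that by Lemma~\ref{lem-O} the group $\cO$ is cyclic of order $d$, so it consists of exactly $d$ distinct classes of $\bbZ_n$, one of which is $[0]_n$. For each class $o\in\cO$ the set of orders $\{\ord(Q)\mid Q\in\centr(L),\ Q\neq 0,\ [\ord(Q)]_n=o\}$ is a nonempty subset of $\bbN$ and hence has a least element; I would choose an operator attaining it, which is order minimal by definition. For $o=[0]_n$ this minimum is $0$, attained by $G_0=1$. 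Indexing the choices as $G_0=1,G_1,\ldots,G_{d-1}$ yields a family satisfying condition 1 by selection and condition 2 because exactly one representative is taken per class. Note that for $k\geq 1$ the class $[\ord(G_k)]_n$ is nonzero, so $G_k\notin\coC[L]$ by Lemma~\ref{lem-CL}.

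For linear independence, suppose $\sum_k p_k(L)G_k=0$ with $p_k(L)\in\coC[L]$. By Lemma~\ref{lem-PQ}(1) each nonzero summand has order $\ord(p_k(L))+\ord(G_k)\equiv_n\ord(G_k)$, and these residues are pairwise distinct; hence \eqref{eq-ord-null} forces every summand to vanish, and since $\centr(L)$ is a domain each $p_k(L)=0$. This is exactly the reasoning of Lemma~\ref{lem-li}, which applies verbatim once $G_0=1$ is included, since the orders mod $n$ remain distinct.

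The spanning property is the heart of the argument and I would carry it out by induction on $m=\ord(Q)$ for nonzero $Q\in\centr(L)$. The class $[m]_n$ lies in $\cO$, so it equals $[\ord(G_k)]_n$ for a unique $k$; writing $m_k=\ord(G_k)$, order minimality gives $m_k\leq m$ and $m-m_k=\ell n$ with $\ell\geq 0$, so $L^{\ell}G_k\in\centr(L)$ has order $m$. The decisive point is that the leading term can be cancelled using a \emph{constant}: by \eqref{eq-SchurThm} every element of $\centr(L)\subset\centr((L))$ is a $\coC$-combination of powers of the root $L^{1/n}$, whence the leading coefficient of an order-$m$ element is the leading coefficient of $(L^{1/n})^m$ scaled by an element of $\coC$. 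In particular the leading coefficients of $Q$ and of $L^{\ell}G_k$ differ by a factor $c\in\coC$, and $Q-cL^{\ell}G_k\in\centr(L)$ has order strictly less than $m$. Since $cL^{\ell}\in\coC[L]$, the induction hypothesis writes $Q-cL^{\ell}G_k$ as a $\coC[L]$-combination of the $G_j$, and hence so is $Q$; the induction bottoms out at $m=0$, where $Q\in\coC=\coC[L]\,G_0$.

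The main obstacle is precisely this last step: Lemma~\ref{lem-PQ}(2) only supplies a reducing coefficient $\alpha\in\Sigma$, whereas a basis over $\coC[L]$ demands a genuine constant. Upgrading $\alpha$ to $c\in\coC$ amounts to knowing that the leading coefficients of equal-order centralizer elements differ by an element of $\coC$, and this is exactly the content that Schur--Goodearl's description \eqref{eq-SchurThm} provides; once it is in hand, everything else is bookkeeping with orders and residues modulo $n$.
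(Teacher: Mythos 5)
Your proof is correct, and its overall architecture matches the paper's: order-minimal representatives per class of $\cO$, linear independence by the argument of Lemma~\ref{lem-li}, and spanning by induction on the order with reduction against $L^{\ell}G_k$. The paper itself does not carry out the spanning step inside Theorem~\ref{thm-cenbasis}; it defers it to the extension Theorem~\ref{thm-Maxbasis}, whose induction is exactly the one you wrote.

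The genuinely valuable difference is in how the reducing coefficient is justified. In the proof of Theorem~\ref{thm-Maxbasis} the paper asserts that ``by Lemma~\ref{lem-PQ}, 2 there exists a nonzero constant $\alpha\in\coC$'' --- but Lemma~\ref{lem-PQ}(2) as stated only supplies $\alpha\in\Sigma$, so the paper's citation does not literally deliver the constant it needs (Goodearl's original Lemma~1.1 does give a constant, which is presumably what is meant). You identified this as the crux and repaired it: since $\centr(L)\subset\centr((L))$ and, by \eqref{eq-SchurThm}, every element of $\centr((L))$ is a $\coC$-combination of powers of the monic root $L^{1/n}$, the leading coefficient of any element of $\centr(L)$ is a constant, so equal-order elements can be cancelled with a factor in $\coC$. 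This makes your write-up self-contained where the paper's is not. Two caveats worth noting: \eqref{eq-SchurThm} requires $L$ monic, which is the paper's standing assumption (normal form) but is not stated in the theorem itself --- for non-monic $L$ over a field one instead checks directly, from the coefficient of $\partial^{n+m-1}$ in $[L,P]=0$, that the ratio of leading coefficients of equal-order centralizer elements is constant; and Lemma~\ref{lem-li} formally applies only to families in $\centr(L)\backslash\coC[L]$, so including $G_0=1$ needs the one-line rerun of its argument that you supplied.
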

\begin{proof}
By lemmas \ref{lem-O} and \ref{lem-li}, a set $\cG(L)$ verifying these conditions is a linearly independent set. It only remains to prove that it is a set of generators
    \begin{equation}
        \centr (L)=\bigoplus_{k=0}^{d-1} \coC[L] G_k=\coC[L,G_1,\ldots , G_{d-1}].
    \end{equation}
Instead of repeating here the proof given in \cite[Theorem~1.2]{Good}, we will prove an extension of this result in Theorem \ref{thm-Maxbasis}.
\end{proof}

\medskip

The rank of a set of differential operators is defined to be the greatest common divisor of all orders, see \cite{PRZ2019}. Thus
the {\sf rank of the centralizer} is
\begin{equation}
    \rank (\centr (L))=\gcd \{\ord(Q)\mid Q\in \centr (L)\}.
\end{equation}

\begin{cor}
    Let us consider $L \in \Sigma [\partial]$ of order $n$ and let $d=|\cO|$. Then
    \begin{equation}
        \rank (\centr (L))=\frac{n}{d}.
    \end{equation}
\end{cor}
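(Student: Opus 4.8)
The plan is to reduce the statement to an elementary computation in the cyclic group $\bbZ_n$, exploiting the additive (semigroup) structure of the set of orders. Write $g=\rank(\centr(L))=\gcd\{\ord(Q)\mid Q\in\centr(L)\}$ and set $S=\{\ord(Q)\mid Q\in\centr(L),\,Q\neq 0\}\subseteq\bbN$. By Lemma~\ref{lem-PQ}, part~1, the set $S$ is closed under addition, and by the definition \eqref{eq-O} its image under the projection $\bbN\to\bbZ_n$ is exactly $\cO$. By Lemma~\ref{lem-O}, $\cO$ is the cyclic subgroup of $\bbZ_n$ of order $d$; since $\bbZ_n$ has a unique subgroup of each order dividing $n$, this forces $\cO=\langle[n/d]_n\rangle=\{[k\,n/d]_n\mid 0\le k<d\}$. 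The goal is then reduced to showing $g=n/d$, which I would do by two divisibilities.

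First I would prove $n/d\mid g$. Any $s\in S$ satisfies $[s]_n\in\cO$, so $s\equiv_n k\,(n/d)$ for some integer $k$; since $n=d\,(n/d)$ is itself a multiple of $n/d$, the element $s$ is a genuine multiple of $n/d$. Hence $n/d$ is a common divisor of every element of $S$, giving $n/d\mid g$. For the reverse divisibility I would use that $[n/d]_n$ generates $\cO$: there exists $Q_1\in\centr(L)$ with $\ord(Q_1)\equiv_n n/d$, and also $\ord(L)=n\in S$. Then $g$ divides both $\ord(Q_1)$ and $n$, so $g\mid\gcd(\ord(Q_1),n)$; writing $\ord(Q_1)=n/d+tn$ and using $n/d\mid n$ yields $\gcd(\ord(Q_1),n)=\gcd(n/d,n)=n/d$, whence $g\mid n/d$. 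Combining the two gives $g=n/d$.

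The argument is essentially number-theoretic once Lemmas~\ref{lem-PQ} and~\ref{lem-O} are in hand, so I do not expect a serious obstacle. The only point requiring care is the distinction between the actual set of orders $S\subseteq\bbN$ and its reduction $\cO\subseteq\bbZ_n$: one must check that membership of $s$ in the class $[k\,n/d]_n$ forces $s$ to be an honest multiple of $n/d$, which is precisely where $n/d\mid n$ enters. As an alternative phrasing, one could work with the subgroup $\widetilde{\cO}=\{m\in\bbZ\mid[m]_n\in\cO\}$ of $\bbZ$: it is a subgroup of $\bbZ$ containing $n\bbZ$ with index $d$, hence $\widetilde{\cO}=(n/d)\bbZ$, while the subgroup of $\bbZ$ generated by $S$ is $g\bbZ$ and, using $n\in S$ together with an element of $S$ lying in the class $[n/d]_n$, coincides with all of $\widetilde{\cO}$; this makes $g=n/d$ transparent.
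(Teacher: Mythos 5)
Your proof is correct and follows essentially the same route as the paper: both rest on identifying $\cO$, via Lemma~\ref{lem-O}, with the unique cyclic subgroup of $\bbZ_n$ of order $d$, generated by $[n/d]_n$. The paper compresses the remaining step into the phrase ``which proves this result,'' and your two divisibility arguments ($n/d \mid g$ because every order lies in a class $[k\,n/d]_n$ and $n/d \mid n$; $g \mid n/d$ using $n\in S$ and an operator of order $\equiv_n n/d$) are exactly the details being left implicit there.
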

\begin{proof}
Since the order of $\cO$ is $d$ then $\cO$ is the cyclic subgroup of $\bbZ_n$ generated by $[n/d]_n$, which proves this result.
\end{proof}

\medskip

For $n\geq 2$, the definition of algebro-geometric operator  given in \cite{We} is equivalent to the existence of $A\in \centr(L)\backslash \coC [L]$ of order coprime with $n$. We can give the following equivalent definition.

\begin{definition}\label{def-ag}
    A differential operator $L\in\Sigma [\partial]\backslash \coC[\partial]$ is {\sf algebro-geometric} if $\rank(\centr (L))=1$.
\end{definition}

By Theorem \ref{thm-cenbasis} and the observation that
\[\rank(\centr (L))=1\Leftrightarrow n=|\cO|,\]
the following corollary follows.

\begin{cor}\label{thm-cenbasis_prime}
    Let $L \in \Sigma [\partial]\backslash \coC[\partial]$ be algebro-geometric.  Then a basis of $\centr (L)$ as a $\coC[L]$-module is formed by $n$ operators $A_i \in \Sigma[\partial]$, order minimal in $\centr (L)$ and  such that $\ord(A_i) \equiv_n i$, $i=0,\ldots ,n-1$.
\end{cor}

\begin{rem}\label{rem-Zheglov}
If $n=\ord(L)$ is prime and $\centr (L)$ is non-trivial then $L$ is algebro-geometric, it has a centralizer of rank $1$.

There are also algebro-geometric operators whose order is not prime.
Consider the order $4$ operator $L$ given in ~\cite[Example 22]{PZ}, included in Example \ref{exm:Zheglov} of this paper.

The centralizer of $L$  was computed with the algorithm developed in this paper, Algorithm \ref{alg:graded_basis} and
is $\centr (L)=\coC [L_4,A_1,A_2,A_3]$
with
$\ord(A_2)=6$, $\ord(A_3)=7$ and $\ord(A_1)=9$, see Example \ref{exm:Zheglov}. So even if $\rank(L,A_2)=2$ the centralizer $\centr (L)$ is of rank $1$. Computing a basis of $\centr (L)$ allowed us to determine that $L$ is algebro-geometric.
\end{rem}

\subsection{Maximal submodule of the cetralizer}

The algorithms developed in this paper combine \eqref{eq-Schur2} with Theorem \ref{thm-cenbasis} to compute centralizers.
The inputs will be a  differential operator $L$ of order $n$ in normal form and its level $M$.
If $n$ and $M$ are coprime then the rank of the centralizer is $1$ and our algorithm computes a basis of the whole centralizer. In the case
$$\rho=(n,M)> 1$$ our algorithm is guaranteed to compute a subalgebra of rank $\rho$ of the centralizer, in fact the {\sf maximal commutative subalgebra of $\centr (L)$ of rank $\rho$}
\begin{equation}
    \cM_{\rho}=\{Q\in \centr (L)\mid \rho | \ord(Q)\},
\end{equation}
with group of orders
\begin{equation}\label{eq-oM}
    \cO_{\rho}=\{[\ord(Q)]_n \mid Q\in\cM (L), Q\neq 0\}.
\end{equation}
We have the following chain of $\coC [L]$-modules
\[\coC[L]\subseteq \cM_{\rho} \subseteq \centr (L),\]
and an upper bound of the rank of the centralizer
\[\rank (\centr (L))\leq \rank (\cM_{\rho})=\rho.\]
Note that $\centr (L)=\cM_{\rho}$ if and only if $(n,M)=1$ is the rank of the centralizer.

\begin{lemma}\label{lem-Grho}
Consider $L \in \Sigma [\partial]$ of order $n$ with $\centr (L)\neq \coC[L]$.
Given $G\in \centr (L)\backslash \coC [L]$, let $\rho=(n,\ord(G))$ and $d=n/\rho$. Then there exists
\begin{equation}
    \cG_{\rho}=\{G_k \in \centr (L )\mid k=0,\ldots ,d-1\}
\end{equation}
such that each $G_k$ is order minimal in $\centr (L)$, with $\ord(G_k)\equiv_n k \rho$ and
\[\ord(G_k)\leq (d-1) \ord(G).\]
\end{lemma}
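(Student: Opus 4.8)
The plan is to exploit the cyclic structure of the group of orders together with the multiplicativity of orders in the domain $\centr(L)$ guaranteed by Lemma \ref{lem-PQ}. Write $m=\ord(G)$, so that $\rho=(n,m)$ and $d=n/\rho$. The first step is to pin down the $d$ target residue classes: the subgroup of $\bbZ_n$ generated by $[m]_n$ coincides with the subgroup generated by $[\rho]_n$ and has exactly $d$ elements, namely $\{[0]_n,[\rho]_n,\ldots,[(d-1)\rho]_n\}$. This is elementary, since $\gcd(m,n)=\rho$.

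The second step is to realize each of these classes by an explicit operator of controlled order. Because $\centr(L)$ is a commutative domain, the powers $G^0=1,G^1,\ldots,G^{d-1}$ all lie in $\centr(L)$, and by repeated application of Lemma \ref{lem-PQ}(1) one has $\ord(G^j)=j\,\ord(G)=jm$ (and $G^j\neq 0$). Writing $m=\rho m'$ and $n=\rho d$ with $\gcd(m',d)=1$, the class $[jm]_n$ equals $[\rho\,(jm'\bmod d)]_n$; since $m'$ is invertible modulo $d$, the map $j\mapsto jm'\bmod d$ permutes $\{0,\ldots,d-1\}$. Hence, as $j$ ranges over $0,\ldots,d-1$, the classes $[\ord(G^j)]_n$ run exactly once through $\{[k\rho]_n\mid k=0,\ldots,d-1\}$.

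Finally, for each $k\in\{0,\ldots,d-1\}$ I would take $G_k$ to be an operator of least order among those operators of $\centr(L)$ whose order lies in the class $[k\rho]_n$; this set is nonempty by the previous step, so the minimum exists by well-ordering of $\bbN$, and the resulting $G_k$ is by construction order minimal in $\centr(L)$ with $\ord(G_k)\equiv_n k\rho$. For the bound, let $G^{j}$ be the unique power representing $[k\rho]_n$; then order minimality gives $\ord(G_k)\le \ord(G^{j})=jm\le (d-1)m=(d-1)\ord(G)$, since $j\le d-1$, and taking $G_0=1$ covers the class $[0]_n$. The only genuinely nontrivial ingredient is the counting argument of the second step showing that the $d$ powers of $G$ exhaust the classes $[k\rho]_n$, which rests on $\gcd(m/\rho,n/\rho)=1$; everything else is a direct consequence of the domain property in Lemma \ref{lem-PQ} and the definition of order minimality, so I do not anticipate any serious obstacle.
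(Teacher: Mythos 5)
Your proof is correct and takes essentially the same route as the paper: the paper's proof also considers the powers $G^k$, $k=0,\ldots,d-1$, which lie in $\centr(L)\backslash\coC[L]$ with $\ord(G^k)=k\,\ord(G)$ by Lemma \ref{lem-PQ}, and then invokes order minimality in each residue class to obtain the bound $(d-1)\ord(G)$. The only difference is that you make explicit the counting step — that $j\mapsto jm'\bmod d$ is a bijection, so the powers of $G$ hit each class $[k\rho]_n$ exactly once — which the paper leaves implicit; this is a useful clarification (it explains why the bound is $(d-1)\ord(G)$ rather than $k\,\ord(G)$ for the operator indexed by $k$) but not a different method.
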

\begin{proof}
Let $m=\ord(G)$. We know that $G^k$ belong to $\centr (L)\backslash \coC [L]$ and $\ord(G^k)\equiv_n  k m$, $k=2,\ldots ,d-1$. Thus there exists $G_k$ order minimal in $\centr (L)$ for each $k$, with
\[\ord(G_k)\leq k m \leq (d-1)m.\]
\end{proof}

The previous lemma proves existence of a $\coC[L]$-linearly independent set of ODOs in the centralizer, that will be proved to be a basis of $\cM_{\rho}$ in the next theorem.

\begin{theorem}\label{thm-Maxbasis}
Consider $L \in \Sigma [\partial]$ of order $n$ and level $M$, with $\rho=(n,M)$.
Then a basis of $\cM_{\rho}$ as a $\coC[L]$-module consists of $d=n/\rho$ operators,
   \begin{equation}
     \cG_{\rho}:=\{ G_0=1, G_1,\ldots,G_{d-1}\}
   \end{equation}
satisfying the following conditions:
\begin{enumerate}
\item Each $G_k$ is order minimal in $\centr (L)$.

\item The set of orders $\cO_{\rho}$, as in \eqref{eq-oM}, is an additive group isomorphic to $\bbZ_d$, $$\cO_{\rho}=\{[\ord(G_k)]_n\mid k=0,1,\ldots ,d-1\}.$$
\item $\ord (G_k)< (d-1) M$.
\end{enumerate}
\end{theorem}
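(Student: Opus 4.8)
The plan is to adapt the proof of Goodearl's Theorem~\ref{thm-cenbasis} from the full centralizer to the subalgebra $\cM_\rho$, supplying the extra bookkeeping forced by the constraint $\rho=(n,M)$. First I would exhibit the candidate basis. Since $M$ is the level of $L$, Definition~\ref{def-level} provides an operator $G\in\centr(L)\setminus\coC[L]$ with $\ord(G)=M$, and then $(n,\ord(G))=(n,M)=\rho$. Feeding this $G$ to Lemma~\ref{lem-Grho} produces $\cG_\rho=\{G_0=1,G_1,\ldots,G_{d-1}\}$ with $d=n/\rho$, each $G_k$ order minimal in $\centr(L)$, $\ord(G_k)\equiv_n k\rho$ and $\ord(G_k)\le(d-1)M$; this is already conditions~(1) and~(3). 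For~(2) I would note that, as $\rho\mid n$ and $d=n/\rho$, the residues $[k\rho]_n$ ($k=0,\ldots,d-1$) are pairwise distinct and fill the cyclic subgroup $\langle[\rho]_n\rangle\cong\bbZ_d$ of $\bbZ_n$; since every operator of $\cM_\rho$ has order divisible by $\rho$, this subgroup is exactly $\cO_\rho$ from~\eqref{eq-oM}.

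Linear independence is immediate from Lemma~\ref{lem-li}, because the orders $\ord(G_k)$ are pairwise incongruent modulo $n$. It remains to prove that $\cG_\rho$ generates $\cM_\rho$, that is, $\cM_\rho=\bigoplus_{k=0}^{d-1}\coC[L]G_k$. The inclusion $\bigoplus_k\coC[L]G_k\subseteq\cM_\rho$ is routine: $L$ and each $G_k$ lie in $\cM_\rho$, and a nonzero combination $\sum_k p_k(L)G_k$ has a unique top-order summand (distinct residues modulo $n$), so its order is a multiple of $\rho$. For the reverse inclusion I would argue by induction on $\ord(Q)$, $Q\in\cM_\rho$. Here the decisive input is Schur's description~\eqref{eq-SchurThm}: as $L$ is monic, every operator of $\centr(L)$ has its leading coefficient in $\coC$. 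If $\ord(Q)=s$ then $\rho\mid s$ forces $[s]_n=[\ord(G_k)]_n$ for a unique $k$; order minimality gives $s-\ord(G_k)\in n\bbN$, so $L^{(s-\ord(G_k))/n}G_k$ has order $s$ as well, and cancelling the two constant leading coefficients yields $\alpha\in\coC$ with $\ord(Q-\alpha L^{(s-\ord(G_k))/n}G_k)<s$.

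The step I expect to be the main obstacle is showing that this remainder remains inside $\cM_\rho$, so that the induction can be iterated without escaping the residue set $\cO_\rho$. Within a subalgebra of rank $\rho$ every order is divisible by $\rho$, so $Q-\alpha L^{(s-\ord(G_k))/n}G_k$, being a difference of two elements of $\cM_\rho$, again has order in $\cO_\rho$ and can be matched to a generator $G_{k'}$; the process then terminates at $\coC[L]$. The real content is thus the closure of $\cM_\rho$ under subtraction, that is, that $\bigoplus_k\coC[L]G_k$ is already a ring of rank exactly $\rho$ with no component of order outside $\cO_\rho$. This fails for the ambient centralizer, which is precisely why the rank hypothesis is essential, and it is the place where the maximality defining $\cM_\rho$ (together with $L$ in normal form, controlling the sub-leading behaviour of the $G_k$) has to be invoked. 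Finally, maximality of $\cM_\rho$ itself is recorded by running the same reduction inside an arbitrary commutative subalgebra of rank $\rho$: all its orders lie in $\cO_\rho$, so every operator reduces into $\bigoplus_k\coC[L]G_k$, whence that module is the maximal rank-$\rho$ subalgebra.
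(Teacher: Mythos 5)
Your proposal follows the same route as the paper's own proof: existence of the candidate set from Lemma~\ref{lem-Grho} applied to an operator of order $M$, $\coC[L]$-linear independence from Lemma~\ref{lem-li}, and generation by induction on the order, subtracting $\alpha L^{q}G_k$ (the paper's $T_Q$) after matching residues modulo $n$ and cancelling constant leading coefficients (you via Schur's description~\eqref{eq-SchurThm}, the paper via Lemma~\ref{lem-PQ}). One small discrepancy: you obtain $\ord(G_k)\leq (d-1)M$, which is what Lemma~\ref{lem-Grho} actually delivers, whereas condition~(3) of the theorem asserts strict inequality; neither argument yields strictness, and in Example~\ref{exm:Zheglov} ($n=4$, $M=6$, $d=2$, minimal order $6$ in the class $[2]_4$) equality actually occurs.

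The genuine gap is the one you yourself flag: that the remainder $Q-\alpha L^{q}G_k$ stays in $\cM_\rho$, so the induction can be iterated. Your justification --- being a difference of two elements of $\cM_\rho$, it again has order in $\cO_\rho$ --- presupposes that $\cM_\rho$ is closed under subtraction, which is exactly the point at issue, and you concede it without supplying an argument. The concession is warranted, because with the definition $\cM_{\rho}=\{Q\in \centr (L)\mid \rho\,|\,\ord(Q)\}$ read literally the claim is false: in Example~\ref{exm:Zheglov} (where $\rho=2$ and, with the notation of Remark~\ref{rem-Zheglov}, $\centr(L)$ contains $A_2$ of order $6$ and $A_3$ of order $7$), the operators $L^2A_2+A_3$ and $L^2A_2$ both have order $14$, yet their difference $A_3$ has order $7$; hence $\cM_2$ is not closed under subtraction, is not a $\coC[L]$-module, and contains elements outside $\coC[L]\oplus\coC[L]A_2$ that no induction of this shape can reach. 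The statement is only coherent when $\cM_\rho$ is interpreted as the submodule $W=\bigoplus_k\coC[L]G_k$ (equivalently, those $Q$ whose components in the Goodearl decomposition of Theorem~\ref{thm-cenbasis} all have order divisible by $\rho$), and then generation holds essentially by construction. For the comparison you asked for: the paper's proof contains precisely the same unjustified assertion (``$Q-\alpha T_Q$ is an operator of order $<m$ in $\cM_\rho$''), so your diagnosis of where the difficulty sits is exactly right; but neither your appeal to maximality and to the normal form of $L$, nor your closing maximality argument (where the remainder of $Q\in\cA$ lies in $\cA+W$, not in $\cA$, so its order need not be divisible by $\rho$), actually closes it.
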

\begin{proof}
By Lemma \ref{lem-Grho} there exists a set $\cG_{\rho}$ verifying the given conditions.
By Lemma \ref{lem-li} any set $\cG_{\rho}$ verifying these conditions is a $\coC [L]$-linearly independent set and $\cM_{\rho}$ contains the free $\coC [L]$-submodule
\begin{equation*}
    W:= \bigoplus_{k=0}^{d-1} \coC [L]G_k.
\end{equation*}
Let $Q$ be a differential operator in $ \cM_{\rho}$ of order $m$. By Lemma \ref{lem-CL}, we can restrict to $m\notin n\bbZ$. We will proceed by induction on $m$ to show that $Q$ is in $W$.

The operator of smallest order divisible by $\rho$ in $\centr (L)\backslash \coC[L]$ is $G_1$. Let us now consider an operator $Q$ of order $m>M$ in $\cM_{\rho}$. There exists $G_i$ such that $\ord(Q)\equiv_n \ord (G_i)$ and we can define $T_Q :=L^q G_{i}$ of order $m$ for some $q$. Consequently, by Lemma \ref{lem-PQ}, 2 there exists a nonzero constant $\alpha\in\coC$ such that $Q-\alpha  T_Q$ is an operator of order $<m$ in $\cM_{\rho}$. According to the induction hypothesis, $Q-\alpha T_Q$ belongs to $W$ and also does $Q$. Consequently $\cM_{\rho}=W$.
\end{proof}

\section{Almost commuting bases and Gelfand-Dickey hierarchies}\label{sec-AlmostCommuting}

Let us consider the field of constants $\coC$ with respect to $\partial$, and assume that $\coC$ has zero characteristic, thus $\coC$ contains (a field isomorphic to) the field of rational numbers $\bbQ$.
Let $L_n$ be a formal differential operator in normal form
\begin{equation}\label{def-L-formal-U}
 L_n = \partial^n + u_2 \partial^{n-2} + \ldots + u_{n-1}\partial + u_n,
\end{equation}
which belongs to $\bbQ\{U\}[\partial ]\subset \coC\{U\}[\partial ]$, with a set $U = \{u_2,\ldots, u_n\}$ of differential variables with respect to the derivation $\partial$. In the ring of differential polynomials $\coC\{U\}$,  we consider the extended derivation
\begin{equation}\label{def-w-cR}
    \partial^k (u_\ell ) =u_\ell^{(k)} , \quad k\in \bbN.
\end{equation}

See \cite[Chapter 1, \S 7]{Kolchin}, for a definition of differential grading of $\coC\{U\}$.
We define a weight function $w$ on  $\coC\{U\}[\partial]$ by setting
\begin{equation}\label{eq-weight}
w(u_\ell^{(k)})=\ell+k , \ k\in \bbN \mbox{ and }w(\partial) = 1.
\end{equation}
From the homogeneity of the commutation rule $\partial p= p\partial +\partial (p)$, for every $p\in \coC\{U\}$ it follows that the product of two operators that are homogeneous of weights $r$ and $s$ is homogeneous of weight $r + s$. As in \cite[Proposition~4.2]{SW}, we call an operator $P\in \coC\{U\}[\partial]$ {\sf homogeneous of weight $r$ with respect to $w$} if the coefficient of $\partial^i$ in $P$ has weight $r-i$.

Let $Q=L^{1/n}$ be the unique monic $n$-th root of $L$ in the ring of pseudo-differential operators $\coC\{U\}((\partial^{-1}))$ and let
$P_m=(Q^m)_+$ be the positive part of $Q^m$.
By \cite[Theorem~3.6]{JRZHD2025}, which is based on \cite[Proposition~2.4]{Wilson1985},
\begin{equation}
    \cH(L)=\{P_m=(Q^m)_+ \mid\ m \in \bbN\}
\end{equation}
is the unique basis of  $W(L)$ for which each $P_m\in \cH(L)$ is homogeneous of weight $m$. To be more precise
\begin{equation}
    P_m=\partial^m+ q_2(U) \partial^{m-2}+\cdots +q_{m-1}(U)\partial +q_m(U), \mbox{ with } w(q_j)=j.
\end{equation}
We call $\cH(L)$ the {\sf homogeneous basis} of $W(L)$.

\medskip

In addition, let us denote by $\overline{H}_m=(H_{0,m},\ldots, H_{n-2,m})$ the vector of differential polynomials such that
    \[[P_m,L_n] = H_{0,m}(U) + H_{1,m}(U)\partial + \ldots + H_{n-2,m}(U)\partial^{n-2}.\]
Then each $H_{k,m}(U) \in \bbQ\{U\}$ is a homogeneous differential polynomial of weight $m+n-k$, see for instance \cite[Section~5]{JRZHD2025}.

\begin{rem}
    The computation of the operators $P_m$ and the vectors of differential polynomials $\overline{H}_m$ has been implemented in the SageMath package \texttt{dalgebra} based on the algorithms designed in \cite{JRZHD2025}.
\end{rem}

Let $L_t$ denote the differential operator obtained from $L_n$ by computing the partial derivative of its coefficients with respect to a variable $t$. Given a monic almost commuting operator $A_m$ of order $m$, that is $A_m\in W_m(L)\backslash W_{m-1}(L)$,  the Lax equation for $L_n$ and $A_m$ is
 \begin{equation}\label{eq-Lax}
     L_t = [A_m , L_n] \ , \ \textrm{ with } t=t_m \ .
 \end{equation}
If the differential indeterminates in $U$ are assumed to be constant for the derivation with respect to $t$ we obtain the {\sf stationary Lax equation}
\begin{equation}\label{eq-Lax-est0}
     0=[A_m , L_n] \ .
 \end{equation}

The equation \eqref{eq-Lax-est0} can be written in terms of the elements of the homogeneous
$\coC$-basis $\cH(L)$  of $W(L)$. In fact,
\begin{equation}\label{eq-Lax-A-basic}
    A_m =P_m +\xi_{m-1}P_{m-1}+\dots + \xi_{0}P_{0} ,\,\,\, \xi=(\xi_0,\ldots ,\xi_{m-1}) \in \coC^m.
\end{equation}
Observe that if $m\in n\bbZ$ then $P_m=L^{m/n}$ is an integer power of $L$, thus $[P_m,L_n]=0$. Then, we can always write
\begin{equation}\label{eq-Lax-est}
     0=[A_m , L_n]=[A^*_m , L_n] \ ,
 \end{equation}
where we define
\begin{equation}\label{eq-Lax-A}
    A^*_m =P_m +\sum_{j\in J_{m-1}} \xi_{j} P_j,\mbox{ with } J_{n,m-1}=\bbZ_{m-1}\backslash [0]_n.
\end{equation}

The Lax equations \eqref{eq-Lax-est} provide a system of   differential equations defined by non linear differential polynomials in the  differential variables $U$, obtained from the coefficient of $\partial^{k}$ in $[A^*_m,L_n]$
\begin{equation}\label{eq-GD}
    0 = H_{k,m}(U) +\sum_{j\in J_{n,m-1}} c_{j}  H_{k,j}(U) \ , \textrm{for } \  k = 0, \dots , n-2,\,\,\, m\geq 2.
\end{equation}
Let us write these systems in row vector form.

For a fixed $n$, the family \eqref{eq-GD} is called the {\sf  stationary Gelfand-Dickey (GD) hierarchy of $L_n$}, see \cite{Dikii, DrinfeldSokolov1985, Wilson1985} and references therein. For a fixed $m\notin n\bbZ$,  {\sf the $(n,m)$-system of the Gelfand-Dickey hierarchy} is denoted by
\begin{equation}\label{eq-Hnm}
   \gd_{n,m}: \overline{H}_m+\sum_{j\in J_{n,m-1}} c_{j}  \overline{H}_j= \overline{0}
\end{equation}
where $\overline{H}_j(U)$ is the row vector $(H_{0,j}(U),\ldots ,H_{n-2,j}(U))$ and
\[\cC_m=\{c_j,\ j\in J_{n,m-1}\}  \]
is a set of algebraic variables.
We will also say that $ \gd_{n,m}$  is the {\sf GD hierarchy of $L_n$ at level $m$}.

Some of these hierarchies have been given specific names. For
$n=2$, the GD hierarchy of $L_2=\partial^2+u_2$ is the Korteweg-de Vries (KdV) hierarchy
\begin{equation}\label{eq-KdV}
    \gd_{2,m}:\, H_{0,m}(U) +\sum_{j\in J_{2,m-1}} c_{j}  H_{0,j}(U)=0\ ,\,\,\, m\geq 2, m\notin 2\bbZ.
\end{equation}
For $n=3$, the Boussinesq hierarchy is obtained.
The GD hierarchy of $L_3= \partial^3 + u_2\partial + u_3$ consists of systems of two nonlinear equations. For each $m\geq 2$, $m\notin 3\bbZ$ and $U=\{u_2,u_3\}$, the system in row vector form is
\begin{equation}
    \gd_{3,m}:\, (H_{0,m}(U), H_{1,m}(U)) +\sum_{j\in J_{3,m-1}} c_{j}  (H_{0,j}(U),H_{1,j}(U))=(0,0).
\end{equation}

\begin{example}\label{exm:hierarchy:3:4}
    Let us write the $(3,4)$-sytem of the Boussinesq hierarchy. Following  ~\cite{JRZHD2025}, from the formal differential operator $L_3 = \partial^3 + u_2 \partial + u_3$ we compute the homogeneous basis of almost-commuting operators $\{P_1,P_2,P_3, P_4\}$ of $W_4(L_3)$, where $P_3=L_3$,
    \begin{align*}
    &P_1 = \partial,\,\,\, P_2 = \partial^2 + \frac{2}{3}u_2 \mbox{ and }\\
    &P_4 = \partial^4 + \left(\frac{4}{3}{u_2}\right)\partial^2 + \left(\frac{2}{3}{u_2}' + \frac{4}{3}{u_3}\right)\partial + \left(\frac{2}{9}{u_2}^{2} + \frac{2}{9}{u_2}'' + \frac{2}{3}{u_3}'\right).
\end{align*}
The $(3,4)$-system of the stationary GD hierarchy, $\gd_{3,4}$ is the linear system in $c_1$ and $c_2$ defined by
    \[\gd_{3,4}:\left\{\begin{array}{rl}
        c_1 H_{0,1} + c_2 H_{0,2} + H_{0,4} & = 0,\\
        c_1 H_{1,1} + c_2 H_{1,2} + H_{1,4} & = 0.\\
    \end{array}\right.\]
    where $H_{k,j}$ are the differential polynomials in $\coC\{u_2,u_3\}$ that will be used to construct illustrating examples throughout this paper,
   \begin{align*}
     &\overline{H}_1 = (-u_3', -u_2'),\,\,\, \overline{H}_2=((2/3){u_2}{u_2'} + (2/3){u_2}''' - {u_3}'',{u_2}'' - 2 {u_3}' ) \mbox{ and }\\
     &\overline{H}_4 = (H_{0,4}, H_{1,4})\mbox{ where }\\
      &      H_{0,4} ={} (2/3){u_2}{u_2}''' - (2/3){u_2}{u_3}'' + (4/9){u_2}^{2}{u_2}' + (4/3){u_2}'{u_2}'' - (2/3){u_2}'{u_3}' \\ & + (2/9){u_2}^{(5)} - (4/3){u_3}{u_3}' - (1/3){u_3}^{(4)},\\
      &H_{1,4} ={} (2/3){u_2}{u_2}'' -(4/3){u_2}{u_3}' - (4/3){u_2}'{u_3} + (2/3){u'_2}^{2} + (1/3){u_2}_{4} -(2/3){u_3}'''.
    \end{align*}
\end{example}

\section{The filtered basis of the centralizer}\label{sec:gradedbasis}

Every $\coC [L]$-basis of the centralizer verifies the conditions of Theorem \ref{thm-cenbasis}. In this section, we will optimize the procedure to search for a $\coC [L]$-basis of the centralizer and describe a basis uniquely determined by the natural filtration of $\centr (L)$ defined by the order of differential operators \eqref{eq-filtration}.

\medskip

Let us consider a differential field $(\Sigma,\partial)$, whose field of constants $\coC$ has zero characteristic.
Let $L$ be a  differential operator in normal form
\begin{equation}\label{def-L}
 L = \partial^n + \upsilon_2 \partial^{n-2} + \ldots + \upsilon_{n-1}\partial + \upsilon_n, \mbox{ with }\upsilon_j\in \Sigma.
\end{equation}
Given the coefficient vector $\upsilon_L=(\upsilon_2,\ldots ,\upsilon_n)$ of $L$, let us consider the specialization map
\begin{equation}\label{eq:cE}
    \cE_{L}: \coC\{U\}\rightarrow \coC\{\upsilon_L\}\subset \Sigma, \mbox{ defined by } \cE_{L}(u_i)=\upsilon_i, i=2,\ldots ,n.
\end{equation}
It is a homomorphism of differential rings and it extends naturally to an specialization of differential operators $\coC\{U\}[\partial]\rightarrow \coC\{\upsilon_L\}[\partial]$. Observe that $L=\cE_L(L_n)$ is the specialization of the formal differential operator in \eqref{def-L-formal-U}.

For each $j\in J_{n,m}=\bbZ_m\backslash [0]_n$ let us define the specialization of the differential polynomials that define the $(n,m)$-system of the GD hierarchy
\begin{equation}\label{eq-sigmakj}
    \sigma_{k,j}:=\cE_L(H_{k,j}),\ k=0,\ldots , n-2,\, j\in J_{n,m}.
\end{equation}
By specializing the differential operators of the homogeneous basis $\cH(L_n)$ we obtain a basis $\cB(L):=\{B_j\mid j\in\bbN\}$ of $W(L)$ where
\begin{align}\label{eq-ACbasis}
B_j:=\cE_L(P_j),\ j\in \bbN.
\end{align}

\begin{definition}
Given $m\geq 2$ and not in $[0]_n$,
the {\sf $(n,m)$-system of the GD hierarchy for $L$} is the result of specializing the differential polynomials in \eqref{eq-Hnm}
\begin{equation}\label{equ:gd}
    \gd_{n,m} (L): \overline{\sigma}_m+\sum_{j\in J_{n,m-1}} c_{j} \overline{\sigma}_j=\overline{0},\mbox{ with } \overline{\sigma}_j=(\sigma_{0,j},\ldots ,\sigma_{n-2,j}),
\end{equation}
a system of $n-1$ linear equations in the set of indeterminates $\cC_m=\{c_{j}\mid j\in J_{n,m-1}\}$, with coefficients in $\Sigma$.
We will also define the {\sf homogeneous $(n,m)$-system of the GD-hierarchy for $L$} by
\begin{equation}\label{equ:hgd}
    \hgd_{n,m} (L): \sum_{j\in J_{n,m}} c_{j} \overline{\sigma}_j=\overline{0}.
\end{equation}
\end{definition}

\begin{example}[$n=3$, $m=4$]\label{exm:hierarchy:3:4:eval}
We define the corresponding specialization map $\cE_L: \coC\{u_2,u_3\}\rightarrow \Sigma$.
By specializing the formal differential operators in Example \ref{exm:hierarchy:3:4} we obtain a basis of $W_4(L)$, formed by the differential operators $B_j=\cE_L(P_j)$, $j=1,\ldots ,4$. By specializing the differential polynomials in Example \ref{exm:hierarchy:3:4}, we obtain $\sigma_{k,j}=\cE_L(H_{k,j})$. Then the homogeneous $(3,4)$-system of the GD-hierarchy for $L$ is a linear system in  $c_1,c_2$ and $c_4$
\[
\hgd_{3,4}(\bbL):
\begin{cases}
c_1\sigma_{0,1}+c_2\sigma_{0,2}+c_4\sigma_{0,4}=0,\\
c_1\sigma_{1,1}+c_2\sigma_{1,2}+c_4\sigma_{1,4}=0.
\end{cases}
\]
\begin{enumerate}
\item \textbf{(Rational)} Consider the differential field $\Sigma = \bbC(x)$ with the standard derivation $\partial = \frac{d}{dx}$ and the differential operator
    $$L = \partial^3 - \frac{6}{x^2}\partial + \frac{12}{x^3}.$$
    The specialization map $\cE_L$ is defined by $\cE_L(u_2) = -6/x^2$ and $\cE_L(u_3) = 12/x^3$.
In this case  $\cE_L(\overline{H}_4) = (0,0)$ and
    \[\hgd_{3,4}(L) : \left\{\begin{array}{rl}
        \frac{36}{x^4} c_1  -\frac{96}{x^5} c_2 &= 0,\\ & \\
        -\frac{12}{x^3} c_1 + \frac{36}{x^4} c_2 &= 0.
    \end{array}\right.\]

    \item \textbf{(Hyperbolic)} Consider the differential field $\Sigma=\bbC \langle\cosh(x)\rangle$ with derivation $\partial=d/dx$ and
    the differential operator
    $$L = \partial^3 +\frac{6}{\cosh(x)^2}\partial.$$
    The specialization map $\cE_L$ is defined by $\cE_L(u_2)  = \frac{6}{\cosh(x)^2}$, $\cE_L(u_3) = 0$. Denoting $\eta=\cosh(x)$
\[\hspace{-\dimexpr\oddsidemargin-0.1in}%
    \hgd_{3,4}(L) : \left\{\begin{array}{rcrcrcl}
       && \frac{\left(48 - 32 \eta^2 \right) \eta'}{ \eta^5} c_2 & + &\frac{\left(-128 \eta^2 + 192\right) \eta'}{3 \eta^5}c_4 &=& 0,\\
        \frac{12 \eta'}{ \eta^3} c_1 & + &\frac{24 \eta^2 - 36}{ \eta^4} c_2 & + &\frac{32  \eta^2 - 48}{ \eta^4}c_4&=& 0.%
    \end{array}\right.\]
\end{enumerate}
\end{example}

\subsection{Flag of constants of the GD hierarchies}\label{sec:flag}

We describe next the $\coC$-vector spaces of constant solutions of the linear systems $\gd_{n,m}(L)$, whose coefficients belong to $\Sigma$ and whose set of indeterminates is
\[\cC_m=\{c_j\mid j\in J_{n,m-1}\}, \mbox{ with } J_{n,m-1}=\bbZ_{m-1}\backslash [0]_n.\]
The results in this section are a generalization of \cite{MRZ1}, Section 4.1, where the flag of constants is described only for the KdV hierarchy.

\medskip

Observe that the linear system $\gd_{n,m}(L)$ in the unknowns $\cC_m$
determines an affine subspace of $\Sigma^{|J_{n,m-1}|}$. Let $F_m$ be its intersection with $\coC^{|J_{n,m-1}|}$
\begin{equation}\label{eq-Fm}
    F_m:=\{\xi\in \coC^{|J_{n,m-1}|}\mid \xi \mbox{ is a solution of } \gd_{n,m}(L)\}.
\end{equation}
In addition, consider $\coC$-vector spaces
\begin{equation}\label{eq-Vm}
    V_m:=\{\xi\in \coC^{|J_{n,m}|}\mid \xi \mbox{ is a solution of } \hgd_{n,m}(L)\}.
\end{equation}

Recall the filtration of $\centr (L)= \cup_m \centr_m (L)$ in \eqref{eq-filtration}.
For every $m\in\bbN$, let us denote by $\coC_m[L]$ the $\coC$-vector space of polynomials in $L$ of order less than or equal to $m$,
which is a $\coC$-vector subspace of $\centr_m (L)$.

\begin{proposition}\label{prop-Vm}
Let $L$ be a differential operator in $\Sigma[\partial]$, of order $n$ and in normal form.
For every $m\in\bbN$, there is an isomorphism of $\coC$-vector spaces
\begin{equation}\label{eq-iso}
    V_m\simeq \frac{\centr_m(L)}{\coC_m [L]}.
\end{equation}
\end{proposition}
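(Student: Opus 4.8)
The plan is to build an explicit $\coC$-linear map $\Phi\colon \centr_m(L)\to V_m$ out of the specialized homogeneous basis $\cB(L)=\{B_j\}$ of $W(L)$, and then show that $\Phi$ is surjective with kernel exactly $\coC_m[L]$, so that the first isomorphism theorem delivers \eqref{eq-iso}. First I would record the expansion underlying everything: since $\cB(L)$ is a $\coC$-basis of $W(L)$ and each $B_j=\cE_L(P_j)$ is monic of order $j$, operators of distinct index have distinct orders, so by \eqref{eq-ord-null} an operator $A\in W(L)$ lies in $W_m(L)$ if and only if $A=\sum_{j=0}^m \xi_j B_j$ with uniquely determined $\xi_j\in\coC$. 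I would then split the index range $\{0,\dots,m\}$ into the multiples of $n$ and the set $J_{n,m}=\{j:1\le j\le m,\ n\nmid j\}$. For $j\in[0]_n$ we have $P_j=L^{j/n}$, hence $B_j=\cE_L(L^{j/n})=L^{j/n}$, so that $\mathrm{span}_\coC\{B_j:j\in[0]_n,\ 0\le j\le m\}=\coC_m[L]$.

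Next I would compute the commutator. Because $\cE_L$ is a homomorphism of differential rings extended to operators, it commutes with products and therefore with commutators, so $[B_j,L]=\cE_L([P_j,L_n])=\sum_{k=0}^{n-2}\sigma_{k,j}\partial^k$. Consequently, for $A=\sum_{j=0}^m\xi_jB_j$ one has
\[
[A,L]=\sum_{k=0}^{n-2}\Big(\sum_{j=0}^m \xi_j\,\sigma_{k,j}\Big)\partial^k.
\]
The crucial observation is that $\overline{\sigma}_j=\overline{0}$ whenever $n\mid j$, since then $[P_j,L_n]=0$; hence only the indices $j\in J_{n,m}$ contribute. Therefore $A\in\centr(L)$ if and only if $\sum_{j\in J_{n,m}}\xi_j\,\overline{\sigma}_j=\overline{0}$, i.e. if and only if the tuple $(\xi_j)_{j\in J_{n,m}}$ solves $\hgd_{n,m}(L)$, which by \eqref{eq-Vm} means precisely $(\xi_j)_{j\in J_{n,m}}\in V_m$.

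This would immediately yield the isomorphism. I would define $\Phi(A)=(\xi_j)_{j\in J_{n,m}}$; it is $\coC$-linear, and the previous step shows its image lies in $V_m$. Given $\xi\in V_m$, the operator $A=\sum_{j\in J_{n,m}}\xi_jB_j$ lies in $\centr_m(L)$ and satisfies $\Phi(A)=\xi$, so $\Phi$ is surjective. Finally, $\Phi(A)=0$ means $\xi_j=0$ for all $j\in J_{n,m}$, i.e. $A\in\mathrm{span}_\coC\{B_j:j\in[0]_n,\ 0\le j\le m\}=\coC_m[L]$; conversely every element of $\coC_m[L]$ lies in $\centr_m(L)$ and maps to $0$, so $\ker\Phi=\coC_m[L]$. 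The first isomorphism theorem then gives $\centr_m(L)/\coC_m[L]\simeq V_m$.

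The point requiring the most care, rather than any deep difficulty, is the bookkeeping of which indices are \emph{trivial}: one must verify that $\overline{\sigma}_j=\overline{0}$ exactly on the multiples of $n$ and that the span of the corresponding $B_j$ is precisely $\coC_m[L]$ (using $B_{kn}=L^k$), so that the variables of $\hgd_{n,m}(L)$, which are indexed by $J_{n,m}$, correspond bijectively to a complement of $\coC_m[L]$ inside $\centr_m(L)$. One should also confirm that $\cE_L$, being a differential-ring homomorphism extended to operators, genuinely commutes with the commutator, as this is what identifies the coefficients of $[B_j,L]$ with the specialized differential polynomials $\sigma_{k,j}$ and thereby ties the centralizer condition to the system $\hgd_{n,m}(L)$.
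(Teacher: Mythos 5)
Your proof is correct and follows essentially the same route as the paper: both rest on expanding elements of $W_m(L)$ in the basis $\{B_j\}$, identifying $\coC_m[L]$ with the span of the $B_j$ for $n\mid j$, and translating the commuting condition into membership in $V_m$ via the system $\hgd_{n,m}(L)$. The only difference is cosmetic: you map $\centr_m(L)\to V_m$ and invoke the first isomorphism theorem, while the paper restricts the isomorphism $\coC^{|J_{n,m}|}\simeq W_m(L)/\coC_m[L]$ to $V_m$; your write-up is somewhat more explicit about why $\overline{\sigma}_j=\overline{0}$ exactly when $n\mid j$ and why $\cE_L$ respects commutators, details the paper leaves implicit.
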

\begin{proof}
Observe that, if $m\in n\bbZ$ then $V_m=V_{m-1}$, with $m-1\notin n\bbZ$. Thus in this case $\centr_m(L)/\coC_m [L]$ is equal to $\centr_{m-1}(L)/\coC_{m-1} [L]$.

Given $m\notin n\bbZ$, there is a natural
isomorphism of $\coC$-vector spaces between $\coC^{m+1}$ and $W_m(L)$. Taking into consideration \eqref{eq-Lax-A}, we obtain the
isomorphism
\begin{equation}\label{equ:isomorphism}  \coC^{|J_{n,m}|}\simeq \frac{W_m(L)}{\coC_m[L]}, \mbox{ defined by } \xi\mapsto \sum_{j\in J_{n,m}} \xi_j B_j+\coC_m[L],
\end{equation}
that restricted to $V_m$ becomes the desired isomorphism, since
\[\sum_{j\in J_{n,m}} \xi_j B_j\in\centr_m(L)=W_m(L)\cap \centr(L).\]
\end{proof}

If $V_m$ is zero then $\hgd_{m,n}(L)$ has only the zero solution, and by Proposition \ref{prop-Vm}, then $\centr_m(L)=\coC_m[L]$. The following conclusion follows inmediately.

\begin{cor}
Let $L$ be a differential operator in $\Sigma[\partial]$, of order $n$ and in normal form.
    If for every positive integer $m\notin [0]_n$ the system $\hgd_{n,m}(L)$ has only the zero solution then  $\centr (L)=\coC [L]$.
\end{cor}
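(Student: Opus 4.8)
The plan is to prove the contrapositive together with an induction across the filtration, reducing the global statement $\centr(L)=\coC[L]$ to the fiber-wise vanishing statements $\centr_m(L)=\coC_m[L]$ supplied by Proposition~\ref{prop-Vm}. Concretely, I would argue as follows. First, observe that by the filtration~\eqref{eq-filtration} we have $\centr(L)=\bigcup_{m\in\bbN}\centr_m(L)$, so it suffices to show $\centr_m(L)=\coC_m[L]$ for every $m$, since taking the union over $m$ of these equalities, together with $\bigcup_m \coC_m[L]=\coC[L]$, yields $\centr(L)=\coC[L]$ (the reverse inclusion $\coC[L]\subseteq\centr(L)$ being automatic).

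The core step is then to establish $\centr_m(L)=\coC_m[L]$ for each fixed $m$. I would split on the residue of $m$ modulo $n$. If $m\in n\bbZ$, then as noted in the proof of Proposition~\ref{prop-Vm} we have $V_m=V_{m-1}$ and $\centr_m(L)/\coC_m[L]=\centr_{m-1}(L)/\coC_{m-1}[L]$, so this case reduces to the case $m-1\notin n\bbZ$. For $m\notin[0]_n$, I invoke the hypothesis: $\hgd_{n,m}(L)$ has only the zero solution, which says exactly that $V_m=0$. By the isomorphism~\eqref{eq-iso} of Proposition~\ref{prop-Vm}, namely $V_m\simeq \centr_m(L)/\coC_m[L]$, the vanishing of $V_m$ forces $\centr_m(L)/\coC_m[L]=0$, i.e.\ $\centr_m(L)=\coC_m[L]$. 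This is precisely the observation already recorded just before the corollary statement, so the remaining work is merely to assemble these per-level equalities into the global conclusion.

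I do not expect a genuine obstacle here, since the corollary is an immediate consequence of Proposition~\ref{prop-Vm}: the only point requiring any care is the bookkeeping of the union over $m$. Specifically, I would make explicit that every element $A\in\centr(L)$ has some finite order $m=\ord(A)$ and hence lies in $\centr_m(L)=\coC_m[L]\subseteq\coC[L]$, which gives $\centr(L)\subseteq\coC[L]$ cleanly without needing to commute the union with anything delicate. The handling of the exponents $m\in n\bbZ$ is the one place where one must not simply quote ``$V_m=0$'', because the hypothesis is only stated for $m\notin[0]_n$; the reduction $V_m=V_{m-1}$ from the proof of Proposition~\ref{prop-Vm} covers this gap, and I would cite it rather than reprove it.

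In summary, the proof is a two-line deduction: for every $m\notin[0]_n$, $V_m=0$ gives $\centr_m(L)=\coC_m[L]$ via~\eqref{eq-iso}, the divisible exponents reduce to these by the $V_m=V_{m-1}$ identity, and taking the union over all $m$ (equivalently, testing membership order by order) yields $\centr(L)=\coC[L]$.
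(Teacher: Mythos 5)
Your proof is correct and follows exactly the paper's route: the paper likewise derives the corollary immediately from Proposition~\ref{prop-Vm}, noting that $V_m=0$ forces $\centr_m(L)=\coC_m[L]$ and letting the filtration~\eqref{eq-filtration} do the rest. Your additional bookkeeping (the reduction $V_m=V_{m-1}$ for $m\in n\bbZ$ and testing membership order by order) just makes explicit what the paper leaves as ``follows immediately.''
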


We can now rephrase the definition of the level of an operator $L$, Definition \ref{def-level}, as the  smallest positive integer $M\notin [0]_n$ such that $\gd_{n,M}(L)$ has a solution in $\coC^{|J_{n,M-1}|}$. That is the smallest positive integer $M$, such that $F_M\neq \emptyset$.

\begin{lemma}\label{lem-flag}
Let $L$ be a differential operator in $\Sigma[\partial]$, of order $n$ and in normal form. Let $M$ be the level of $L$.
\begin{enumerate}
    \item
 Identifying $V_m$ with its natural embedding in $V_{m+1}$,  there is an infinite flag of $\coC$-vector spaces
    \begin{equation}
        V_{M} \subseteq \cdots \subset V_{m} \subseteq V_{m+1} \subseteq \cdots,
    \end{equation}
    we call it {\sf the flag of constants of $L$}.

    \item $V_m=V_{m-1}\oplus T_m$ with $\dim T_m\leq 1$. If $\dim T_m=1$, let  $T_m=\langle (\bc_m,1)\rangle$.

    \item $F_m=\emptyset$ or $F_m=\{\bc_m\}+V_{m-1}$ is the affine space with associated vector space $V_{m-1}$. In particular $F_{M}=\{\bc_M\}$ and $V_{M}=\langle (\bc_M,1)\rangle$.
\end{enumerate}
\end{lemma}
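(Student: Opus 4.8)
The plan is to reduce all three statements to elementary linear algebra over $\coC$ by exploiting one dictionary between the inhomogeneous and homogeneous systems. Since $m\notin[0]_n$ gives $J_{n,m}=J_{n,m-1}\cup\{m\}$, a tuple $\xi\in\coC^{|J_{n,m-1}|}$ solves $\gd_{n,m}(L)$ if and only if $(\xi,1)\in\coC^{|J_{n,m}|}$ solves $\hgd_{n,m}(L)$, because the term $\overline{\sigma}_m$ of $\gd_{n,m}(L)$ is exactly the column of $\hgd_{n,m}(L)$ with its unknown set to $1$. Thus
\[
F_m=\{\xi\mid (\xi,1)\in V_m\}.
\]
Dually, padding any solution of $\hgd_{n,m}(L)$ with a zero in the $c_{m+1}$-slot yields a solution of $\hgd_{n,m+1}(L)$, since the new column $\overline{\sigma}_{m+1}$ then carries coefficient $0$. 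I would record these two facts first, and note that if $m\in[0]_n$ then $J_{n,m}=J_{n,m-1}$, so $V_m=V_{m-1}$ and all three claims hold trivially; henceforth assume $m\notin[0]_n$.

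For part 1, the padding map $\coC^{|J_{n,m}|}\to\coC^{|J_{n,m+1}|}$ inserting zeros in the new coordinates is an injective $\coC$-linear map carrying $V_m$ into $V_{m+1}$ by the observation above, which is exactly the asserted flag. The normalization that it begins at $V_M$ comes from Proposition \ref{prop-Vm}: since $V_m\simeq\centr_m(L)/\coC_m[L]$ and $M$ is by Definition \ref{def-level} the least index at which the centralizer ceases to be trivial, one has $V_m=0$ for all $m<M$.

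For part 2, I would consider the $\coC$-linear projection $\pi\colon V_m\to\coC$ onto the $c_m$-coordinate. Its kernel is the set of solutions of $\hgd_{n,m}(L)$ with $c_m=0$, which is precisely the system $\hgd_{n,m-1}(L)$, i.e. $\ker\pi=V_{m-1}$. Since $\operatorname{im}\pi$ is a subspace of the line $\coC$, it is either $0$ or all of $\coC$. In the first case $V_m=V_{m-1}$ and $T_m=0$; in the second there is a solution with $c_m=1$, namely a vector $(\bc_m,1)\in V_m$, and rank--nullity gives $V_m=V_{m-1}\oplus\langle(\bc_m,1)\rangle$ with $\dim T_m=1$. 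This projection is the conceptual heart of the argument, and essentially the only nontrivial step.

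For part 3, I would intersect $V_m$ with the affine hyperplane $c_m=1$ and invoke part 2: if $\dim T_m=0$ no solution has $c_m=1$, whence $F_m=\emptyset$; if $\dim T_m=1$, every solution with $c_m=1$ is $(\bc_m,1)$ plus an element of $\ker\pi=V_{m-1}$, and forgetting the constant last coordinate gives $F_m=\{\bc_m\}+V_{m-1}$, an affine space with direction $V_{m-1}$. The ``in particular'' statement is then immediate: by part 1, $V_{M-1}=0$, while $F_M\neq\emptyset$ by the definition of $M$, so the second case applies with $V_{M-1}=0$, yielding $F_M=\{\bc_M\}$ and $V_M=\langle(\bc_M,1)\rangle$. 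The only genuine obstacle is bookkeeping: keeping the index sets $J_{n,m}$ and $J_{n,m-1}$ straight, distinguishing the two embeddings (padding with $0$ for the flag versus fixing $c_m=1$ for the affine slice), and disposing of the case $m\in[0]_n$ cleanly; once the dictionary $F_m\leftrightarrow\{c_m=1\text{ slice of }V_m\}$ is fixed, each step is a one-line application of rank--nullity over $\coC$.
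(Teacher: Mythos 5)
Your proof is correct and follows essentially the same route as the paper: part 1 comes from the padding embedding $\xi\mapsto(\xi,0)$, and parts 2--3 from the observation that solutions of $\hgd_{n,m}(L)$ whose $c_m$-coordinate vanishes are exactly $V_{m-1}$, with part 3 read off as the slice $c_m=1$. Your projection-plus-rank--nullity formulation is simply a cleaner writeup of the paper's argument for part 2, where the same idea appears as choosing $\alpha\in\coC$ so that $\alpha(\bc_m,1)-\xi$ has last coordinate zero.
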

\begin{proof}
\begin{enumerate}
    \item Recall that if $m+1\in n\bbZ$ then $V_m=V_{m+1}$, otherwise there is a natural embedding from $V_m$ to $V_{m+1}$ by $\xi\mapsto (\xi,0)$.

    \item If $m\notin n\bbZ$ then there exists $(\bc_m,1)\in V_m$, with $\bc_m=(\bc_{m,j_1},\ldots ,\bc_{m,j_s})\in \coC^{J_{n,m-1}}$, with $J_{n,m-1}=\{j_1,\ldots ,j_s\}$. Given a nonzero $\xi\in T_m$, there exists $\alpha\in\coC$ such that $\alpha (\bc_m, 1)- \xi\in V_{m-1}\times \{0\}\cap T_m$. Thus $\xi=\alpha (\bc_m, 1)$.

    \item It is an immediate consequence of 2.
\end{enumerate}
\end{proof}

\begin{lemma}\label{lem-quotient}
    Given $m\in\bbN$, if $m\notin [0]_n$ then there is an isomorphism of $\coC$-vector spaces
    \begin{equation}
        \frac{V_m}{V_{m-1}}\simeq \frac{\centr_m(L)}{\centr_{m-1}(L)}.
    \end{equation}
If $m\in [0]_n$ then $V_m=V_{m-1}$.
\end{lemma}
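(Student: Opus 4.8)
The plan is to establish the isomorphism by comparing the two quotients to $V_m / V_{m-1}$ via Proposition \ref{prop-Vm}, which already provides compatible isomorphisms $V_m \simeq \centr_m(L)/\coC_m[L]$ at each level. First I would handle the easy case $m \in [0]_n$: by the observation opening the proof of Proposition \ref{prop-Vm} (namely that $V_m = V_{m-1}$ when $m \in n\bbZ$), this gives the second assertion directly, and on the centralizer side Lemma \ref{lem-CL} shows that any order-$m$ operator with $m \in n\bbZ$ lies in $\coC[L]$, so no new quotient classes appear. The substance is therefore the case $m \notin [0]_n$.

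For $m \notin [0]_n$, the natural approach is to assemble a commutative diagram of $\coC$-vector spaces. Proposition \ref{prop-Vm} gives isomorphisms $\varphi_m : V_m \to \centr_m(L)/\coC_m[L]$ and $\varphi_{m-1} : V_{m-1} \to \centr_{m-1}(L)/\coC_{m-1}[L]$, both induced by $\xi \mapsto \sum_{j \in J_{n,m}} \xi_j B_j$. The key step is to check that $\varphi_m$ carries the embedded copy of $V_{m-1}$ (via $\xi \mapsto (\xi,0)$ from Lemma \ref{lem-flag}, part 1) into the image of $\centr_{m-1}(L)$ inside the quotient $\centr_m(L)/\coC_m[L]$. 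This is precisely the compatibility of the isomorphisms in \eqref{equ:isomorphism} across consecutive levels: the basis operators $B_j$ are the same for both maps, and an element of $V_{m-1}$ corresponds to a sum $\sum_{j \in J_{n,m-1}} \xi_j B_j$ whose order is at most $m-1$, hence it represents a class coming from $\centr_{m-1}(L)$. With this compatibility in hand, $\varphi_m$ descends to an isomorphism on quotients,
\begin{equation*}
    \frac{V_m}{V_{m-1}} \simeq \frac{\centr_m(L)/\coC_m[L]}{\centr_{m-1}(L)/\coC_{m-1}[L]}.
\end{equation*}

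To finish I would simplify the right-hand side. Since $m \notin [0]_n$, Lemma \ref{lem-CL} implies $\coC_m[L] = \coC_{m-1}[L]$ (no polynomial in $L$ has order exactly $m$), so the denominators agree and the double quotient collapses by the third isomorphism theorem to $\centr_m(L)/\centr_{m-1}(L)$. Combining this with the displayed isomorphism yields the claim. The main obstacle I anticipate is the bookkeeping in verifying the compatibility of the two instances of the isomorphism \eqref{equ:isomorphism} at levels $m$ and $m-1$ — specifically, confirming that the embedding $V_{m-1} \hookrightarrow V_m$ of Lemma \ref{lem-flag} matches the natural inclusion $\centr_{m-1}(L) \hookrightarrow \centr_m(L)$ under $\varphi$. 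Everything else is a formal diagram chase once the basis $\{B_j\}$ is seen to be shared between the two levels and the coincidence $\coC_m[L] = \coC_{m-1}[L]$ is invoked.
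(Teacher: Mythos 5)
Your proposal is correct and follows essentially the same route as the paper: identify both $V_m$ and $V_{m-1}$ with centralizer quotients via Proposition \ref{prop-Vm}, note that $\coC_m[L]=\coC_{m-1}[L]$ when $m\notin n\bbZ$, and collapse the resulting double quotient by the third isomorphism theorem. The only difference is that you spell out the compatibility of the isomorphism \eqref{equ:isomorphism} with the embedding $V_{m-1}\hookrightarrow V_m$, a verification the paper leaves implicit.
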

\begin{proof}

Observe that if $m\notin n\bbZ$ then $\coC_m[L]=\coC_{m-1}[L]$. The result follows by the third isomorphism theorem
    \begin{equation}
        \frac{V_m}{V_{m-1}}\simeq \frac{\centr_m(L)/\coC_{m-1}[L]}{\centr_{m-1}(L)/\coC_{m-1}[L]}.
    \end{equation}
\end{proof}

\subsection{Optimized GD hierarchies}

The next results are devoted to describe the basis that is uniquely determined by the filtration of the centralizer. Such basis is obtained from the unique solutions of an optimization of the systems $\gd_{n,m}(L)$.

\medskip

Let $\cG(L)=\{G_0=1,G_1,\ldots ,G_{d-1}\}$ be a basis of $\centr (L)$ and assume that
\begin{equation}\label{eq-ordseq}
M_0=0<M_1=\ord(G_1)<M_2=\ord(G_2)<\cdots <M_{t-1}=\ord(G_{d-1}).
\end{equation}
The next lemma is a consequence of \eqref{eq-ordseq}.

\begin{lemma}\label{lem-directsum}
Given $m\in\bbN$, the following identity holds:
    \[\centr_{m}(L) = \bigoplus_{G \in \cG(L)} \coC_{m-\ord(G)}[L] G,\]
assuming that $\coC_{m-\ord(G)}[L]$ is zero if $m-\ord(G)<0$.
In particular, for every $i\in \{1,\ldots ,d-1\}$ we can write
\[\centr_{M_i} (L)= \bigoplus_{j=0}^i \coC_{M_i-M_j} [L] G_j.\]
\end{lemma}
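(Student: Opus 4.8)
The plan is to prove the direct sum decomposition of $\centr_m(L)$ by combining Goodearl's basis theorem (Theorem \ref{thm-cenbasis}) with the order bookkeeping forced by the ordering \eqref{eq-ordseq}. Since $\cG(L) = \{G_0=1, G_1, \ldots, G_{d-1}\}$ is a $\coC[L]$-basis of $\centr(L)$, every $Q \in \centr(L)$ has a unique expression $Q = \sum_{G \in \cG(L)} p_G(L)\, G$ with $p_G(L) \in \coC[L]$. The whole content of the lemma is then to characterize, in terms of this unique expansion, exactly when $\ord(Q) \le m$.

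First I would establish the order formula for a single summand. By Lemma \ref{lem-PQ}(1), $\centr(L)$ is a domain and $\ord$ is additive on products, so $\ord(p_G(L)\, G) = \ord(p_G(L)) + \ord(G) = n\cdot \deg_L(p_G) + \ord(G)$, where $\deg_L$ denotes the degree as a polynomial in $L$. The crucial observation is that the distinct summands $p_G(L)\,G$ have pairwise distinct orders modulo $n$: indeed $\ord(p_G(L)\,G) \equiv_n \ord(G)$, and by condition 2 of Theorem \ref{thm-cenbasis} the classes $[\ord(G_0)]_n, \ldots, [\ord(G_{d-1})]_n$ are the $d$ distinct elements of the group $\cO$. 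Hence whenever two summands are nonzero their orders differ (they lie in different residue classes mod $n$), so by \eqref{eq-ord-max} and \eqref{eq-ord-null} there is no cancellation between distinct summands and $\ord(Q) = \max_{G} \ord(p_G(L)\,G)$.

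With no cancellation, the condition $\ord(Q) \le m$ is equivalent to $\ord(p_G(L)\,G) \le m$ for every $G$ simultaneously, i.e. $n\cdot\deg_L(p_G) + \ord(G) \le m$, which says precisely that $p_G(L) \in \coC_{m-\ord(G)}[L]$ (and forces $p_G = 0$ when $m - \ord(G) < 0$, matching the stated convention). This yields the containment $\centr_m(L) \subseteq \bigoplus_{G} \coC_{m-\ord(G)}[L]\,G$; the reverse inclusion is immediate since each term $\coC_{m-\ord(G)}[L]\,G$ has order at most $m$ and lies in $\centr(L)$, hence in $\centr_m(L)$. The sum is direct because it is a restriction of the direct sum decomposition $\centr(L) = \bigoplus_G \coC[L]\,G$ from Theorem \ref{thm-cenbasis}. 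The specialization to $\centr_{M_i}(L)$ then follows by substituting $m = M_i$ and using \eqref{eq-ordseq}: for $j > i$ we have $M_i - M_j < 0$ so those summands vanish, while for $j \le i$ the term $\coC_{M_i - M_j}[L]\,G_j$ survives, giving $\centr_{M_i}(L) = \bigoplus_{j=0}^i \coC_{M_i - M_j}[L]\,G_j$.

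The main obstacle is the no-cancellation argument, and it is essential to get it exactly right: it relies on the fact that the orders of the basis elements are genuinely distinct \emph{modulo} $n$, not merely distinct as integers. This is what guarantees that multiplying $G_k$ by any power of $L$ (which shifts the order by multiples of $n$) can never bring $\ord(p_{G_k}(L)\,G_k)$ into coincidence with $\ord(p_{G_j}(L)\,G_j)$ for $j \ne k$, so \eqref{eq-ord-null} applies term by term. Everything else is routine degree bookkeeping once this is in place.
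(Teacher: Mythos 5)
Your proof is correct: the no-cancellation argument via distinct residue classes modulo $n$, the additivity of $\ord$ from Lemma \ref{lem-PQ}(1), and the resulting equivalence $\ord(Q)\le m \Leftrightarrow p_G(L)\in\coC_{m-\ord(G)}[L]$ for every $G$ are all sound. The paper itself gives no proof of this lemma, remarking only that it is ``a consequence of \eqref{eq-ordseq}''; your argument is exactly the routine verification the authors leave implicit, combining Theorem \ref{thm-cenbasis} with the order bookkeeping, so it matches the intended approach.
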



\begin{rem}
The following piece of the flag of constants is of special importance
\begin{equation}
        V_{M_1} \subset \cdots \subset V_{M_1+\ell_1 n} \subset \cdots \subset V_{M_2} \subset \cdots \subset V_{M_{d-1}}.
    \end{equation}
Observe that $V_{M_i-1}$ can be identified with $V_{M_{\ell}+h_{\ell} n}$, where $$M_{\ell}+h_{\ell} n=\max\{M_k+h n\mid 0\leq hn<M_i-M_k, k=0,\ldots ,i-1
\}.$$
\end{rem}

The next theorem will allow us to define an iterative algorithm to search for the basis of $\centr (L)$. At each step, we compute the constant solutions of the  {\sf optimized GD $(n,m)$-system}
\begin{equation}\label{eq-wgdm}
\gd^*_{n,m}(L): \overline{\sigma}_{m}+\sum_{j\in J^*_{n,m-1}} c_j \overline{\sigma}_j=0,
\end{equation}
which is a modified GD system obtained by removing at each iteration $i$ the indices of the classes $[M_k]_n$, $M_k<m$, with
\begin{equation}
J^*_{n,m-1}=\bbZ_{m-1}\backslash \cup_{M_k<m} [M_k]_n.
\end{equation}

\begin{theorem}\label{thm:widehat}
Let $L$ be a differential operator in $\Sigma[\partial]$, of order $n$ and in normal form.
There exists a unique basis
\begin{equation}\label{eq-Gbasis}
\cG^*(L)=\{1, G^*_1, \ldots , G^*_{d-1}\}
\end{equation}
of $\centr (L)$ as a $\coC [L]$-module satisfying the following conditions:
\begin{enumerate}
    \item The orders $M_i=\ord(G^*_i)$ form an strictly increasing sequence
\[0<M_1<\cdots <M_{d-1}.\]
    \item For each $i\in \{1,\ldots , d-1\}$
\begin{equation}
 \frac{\centr_{M_i}(L)}{\centr_{M_i-1}(L)}\simeq \coC G^*_i.
\end{equation}
\item $G^*_i$ is defined by the unique solution  $\bc^*_{M_i}=(\bc^*_{M_{i},j}\mid j\in J^*_{n,M_{i}-1})$
 in $\coC^{| J^*_{n,M_{i}-1}|}$ of the system $\gd^*_{n,M_i}(L)$, namely
\begin{equation}\label{eq-hatG}
G^*_i=B_{M_i}+\sum_{j\in J^*_{n,M_{i}-1}} \bc^*_{M_i,j} B_j.
\end{equation}
\end{enumerate}
We will call $\cG^*(L)$ \sf{the filtered basis of $\centr (L)$}.
\end{theorem}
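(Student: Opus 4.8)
The plan is to construct the filtered basis inductively by order, using the flag of constants from Lemma \ref{lem-flag} to control when a new generator appears, and the optimized systems $\gd^*_{n,m}(L)$ to pin down each generator uniquely. The key realization is that the orders $M_1 < \cdots < M_{d-1}$ at which new generators appear are exactly the values $m \notin [0]_n$ where the flag strictly grows, i.e.\ where $\dim T_m = 1$ in the notation of Lemma \ref{lem-flag}(2). Indeed, by Lemma \ref{lem-quotient} we have $V_m/V_{m-1} \simeq \centr_m(L)/\centr_{m-1}(L)$, so a jump in the flag detects precisely a new order class in the group of orders $\cO$, and by Goodearl's Theorem \ref{thm-cenbasis} there are exactly $d = |\cO|$ such jumps (counting $M_0 = 0$). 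This already gives both the correct cardinality and condition (1).

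\medskip

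\textbf{First I would} establish condition (2). For each jump index $M_i$, Lemma \ref{lem-quotient} gives $\centr_{M_i}(L)/\centr_{M_i-1}(L) \simeq V_{M_i}/V_{M_i-1} = T_{M_i}$, which is one-dimensional, so the quotient is $\coC G^*_i$ for any $G^*_i \in \centr_{M_i}(L) \setminus \centr_{M_i-1}(L)$; this is exactly condition (2). The content is then in condition (3) and in \emph{uniqueness}. The idea is that the optimized system $\gd^*_{n,M_i}(L)$ removes the indices lying in the classes $[M_k]_n$ for the previously found generators $M_k < M_i$; by Lemma \ref{lem-directsum} these removed basis operators $B_j$ span exactly the contributions of $\coC_{M_i - M_k}[L]\, G^*_k$ already accounted for in $\centr_{M_i - 1}(L)$. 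Removing them forces any solution to be a genuinely new element modulo $\centr_{M_i-1}(L)$, so the solution set of $\gd^*_{n,M_i}(L)$ in $\coC^{|J^*_{n,M_i-1}|}$ is a single point $\bc^*_{M_i}$, and \eqref{eq-hatG} defines the unique $G^*_i$.

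\medskip

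\textbf{The main step} is to verify that the optimization indeed collapses the affine solution space $F_{M_i} = \{\bc_{M_i}\} + V_{M_i - 1}$ (from Lemma \ref{lem-flag}(3)) to a single point. The translation part $V_{M_i-1}$ is precisely the freedom of adding lower-order elements of the centralizer, and Lemma \ref{lem-directsum} tells us that every such lower-order element is a $\coC[L]$-combination of the previously constructed $G^*_0, \ldots, G^*_{i-1}$, whose leading-order contributions live in the classes $[M_k]_n$. Restricting the index set to $J^*_{n,M_i-1} = \bbZ_{M_i-1}\setminus \bigcup_{M_k < M_i}[M_k]_n$ kills exactly these degrees of freedom, so the restricted system has a zero-dimensional (hence singleton) solution set. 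Uniqueness of the whole basis then follows because each $G^*_i$ is determined with no remaining choice, and any basis satisfying (1)--(3) must produce the same jump orders $M_i$ and the same unique solution vectors.

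\medskip

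\textbf{The hardest part} I expect is the bookkeeping showing that the removed classes $\bigcup_{M_k < M_i}[M_k]_n$ account for \emph{all} and \emph{only} the lower-order centralizer contributions at order $M_i$ — equivalently, that $|J_{n,M_i-1}| - |J^*_{n,M_i-1}| = \dim V_{M_i-1}$. This is a combinatorial count matching the dimension of $V_{M_i-1}$ (built from the already-found generators via $L$-powers as in Lemma \ref{lem-directsum}) against the number of congruence classes removed below $M_i$. Once this dimension match is in place, the passage from the affine space $F_{M_i}$ to a singleton is automatic, and the existence half is guaranteed by Theorem \ref{thm-cenbasis} together with Lemma \ref{lem-flag}, so I would present existence briefly and devote the care to this dimension-counting argument for uniqueness.
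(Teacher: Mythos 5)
Your high-level framework is the same as the paper's: Lemma \ref{lem-flag} for the flag, Lemma \ref{lem-quotient} for conditions (1)--(2), and Lemma \ref{lem-directsum} for the decomposition $\centr_{M_i-1}(L)=\bigoplus_{k=0}^{i-1}\coC_{M_i-M_k-1}[L]\,G_k$. The divergence --- and the gap --- is in how you finish. The paper's actual crux is an order-class argument: if $\gd^*_{n,M_i}(L)$ had two distinct constant solutions, their difference would yield a nonzero element of $\centr_{M_i-1}(L)$ whose order lies in $J^*_{n,M_i-1}$; but by the decomposition above, every nonzero element of $\centr_{M_i-1}(L)$ has order of the form $M_k+hn$ with $h\geq 0$, so no such order is available, a contradiction. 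You replace this by the claim that once the count $|J_{n,M_i-1}|-|J^*_{n,M_i-1}|=\dim V_{M_i-1}$ is verified, ``the passage from the affine space $F_{M_i}$ to a singleton is automatic.'' It is not. What is needed is that the projection of $V_{M_i-1}$ onto the removed coordinates is injective; this is not a coordinate-subspace statement, because the elements $L^hG_k$ spanning $V_{M_i-1}$ equal $B_{M_k+hn}$ \emph{plus tails supported on non-removed indices} $B_j$. With matching dimensions but without injectivity, the solution set of $\gd^*_{n,M_i}(L)$ could be empty or positive-dimensional. The injectivity is exactly the order-class argument you demoted to a side remark; it, not the bookkeeping, is the content of the proof.

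Moreover, the count you single out as ``the hardest part'' is false under the paper's literal definition $J^*_{n,m-1}=\bbZ_{m-1}\setminus\bigcup_{M_k<m}[M_k]_n$. Take $n=5$, $M_1=6$, $M_2=7$ (the paper's own running example): at $i=2$ one removes from $J_{5,6}=\{1,2,3,4,6\}$ the whole class $[6]_5=[1]_5$, i.e.\ the two indices $\{1,6\}$, while $\dim V_6=1$ (the only new element of order $\leq 6$ is $G_1$). The count, and indeed the existence half of condition (3), only hold under the reading in which one removes from each class $[M_k]_n$ only the indices $M_k+hn\geq M_k$ --- precisely the orders realized by $L^hG_k$ --- which is what the worked example after the theorem does (it keeps $c_1$ in $\gd^*_{5,7}(L)$). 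If you pursue your plan you must adopt this reading; then the count becomes routine, existence follows by triangular elimination of the coefficients at the removed indices, and uniqueness still requires the order-class contradiction. A smaller slip in the same direction: the flag strictly grows at infinitely many orders (every $M_k+hn$), not only at $M_1,\dots,M_{d-1}$; the $M_i$ are the minimal orders in each new congruence class of $\cO$, not ``the values where the flag strictly grows.''
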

\begin{proof}
By Lemma \ref{lem-flag} then
    \[V_{M_i}=V_{M_i-1}\oplus T_i,\]
where $T_i$ is a one dimensional $\coC$-vector space $T_i=\langle (\bc_{M_i},1) \rangle$. Observe that $T_i$ is not unique and can be replaced by any $\langle (\bc_{M_i}+\xi,1) \rangle$ with $\xi\in V_{M_i-1}$. By Lemma \ref{lem-directsum}
\begin{equation}\label{eq-cenMi}
   \centr_{M_i-1}(L) = \bigoplus_{k=0}^{i-1} \coC_{M_i-M_k-1}[L] G_k,
\end{equation}
and by Lemma \ref{lem-quotient}  then
\begin{equation}
    \frac{V_{M_i}}{V_{M_i-1}}\simeq \frac{\centr_{M_i}(L)}{\centr_{M_i-1}(L)}=\langle G_i+ \centr_{M_i-1}(L)\rangle=\langle G^*_i+ \centr_{M_i-1}(L)\rangle,
\end{equation}
with $G^*_i$ as in \eqref{eq-hatG}. If there exists $\xi\neq \bc^*_{M_i}$ that is also a solution of $\gd^*_{n,M_i}(L)$ then there exists a solution of $\gd_{n,m}(L)$ for some $m\in J^*_{n,M_i-1}$, which is a contradiction by \eqref{eq-cenMi}.
\end{proof}

\begin{example}
    Let us assume that $n=5$ and $M_1=6$. Then
    $$G_1=B_6+\bc_{6,4} B_4+\bc_{6,3} B_3+\bc_{6,2} B_2+\bc_{6,1} B_1$$
    with $\bc_6=(\bc_{6,4},\bc_{6,3},\bc_{6,2},\bc_{6,1})$. Observe that $V_6=\langle (\bc_6,1) \rangle$ and $V_5=\{\overline{0}\}$.

    Let us also assume that $M_2=7$ then $V_7=V_6\oplus T_7$ with $T_7=(\bc_7,1)$ and  $\bc_7=(\bc_{7,6},\bc_{7,4},\bc_{7,3},\bc_{7,2},\bc_{7,1})$.
    Let
    $$G_2=B_7+\bc_{7,6} B_6+\bc_{7,4} B_4+\bc_{7,3} B_3+\bc_{7,2} B_2+\bc_{7,1} B_1.$$
    If $\bc_{7,6}\neq 0$ define $G^*_2=G_2
    -\bc_{7,6} G_1$ and observe that
    $$\bc^*_7=(\bc_{7,4}-\bc_{7,6},\bc_{7,3}-\bc_{7,6},\bc_{7,2}-\bc_{7,6},\bc_{7,1}-\bc_{7,6})$$
    is the unique solution of the system $$\gd^*_{5,7}(L): \overline{\sigma}_7+ c_4 \overline{\sigma}_4+c_{3}\overline{\sigma}_3+c_{2}\overline{\sigma}_2+c_{1}\overline{\sigma}_1=\overline{0}.$$
\end{example}

\subsection{Filtered basis algorithm}\label{sec:bounds}

We are ready to write an algorithm to compute the filtered basis of $\centr (L)$. Given $L$ in $\Sigma[\partial]$, of order $n$ and level $M$, with $\rho=(n,M)$, then by Theorem \ref{thm-Maxbasis} the elements of any basis $\cG_{\rho}$ of the maximal $\coC [L]$-submodule $\cM_{\rho}$ of rank $\rho$ verify
\[\ord(G)<B=(d-1) M, \mbox{ with }d=\frac{n}{\rho}.\]
The next algorithm computes a $\coC$-basis of
\[\widehat{\centr}_B(L):=\frac{\centr_{B}(L)}{\coC_{B}[L]}\]
and as a consequence a $\coC[L]$-basis of $\cM_{\rho}$. In particular, if $\rho=1$, it computes the filtered basis of  $\centr (L)$.

\begin{algorithm}[!ht]
\caption{\texttt{Filtered\_basis}}\label{alg:graded_basis}
    \Input{$L$ in $\Sigma[\partial]$, of order $n$ and in normal form; the level $M$ of $L$.}
    \Output{A $\coC$-basis of $\widehat{\centr}_B(L)$.}

    $B \gets (n/(n,M)-1)M$\;
    $\text{found} \gets \{[0]_n\}$\;
    $m\gets M$\;

    \For{$m=M,\ldots,B$}{%
        \If{$[m]_n \notin \text{found}$}{%
            $\xi\gets \text{Solve } \gd^*_{n,m}(L)$ in $\coC^{|J^*_{n,m-1}|}$\tcp*{$\leq$ one solution}

            \If{$\xi$ exists}{%
                $\widehat{G}_{|\text{found}|} \gets \sum_{j\in J} \xi_j B_j + B_m$\tcp*{As in\eqref{eq-wgdm}}
                $\text{found} \gets \text{found} \cup \{[m]_n\}$\;
            }
        }
    }
    \Return $[1,G^*_1,\ldots,G^*_{|\text{found}|}]$\;
\end{algorithm}

\begin{rem}
    By Theorem \ref{thm:widehat}, for every $m\notin [0]_n$ the optimized GD system $\gd^*_{n,m}(L)$ either has no solution or a unique solution until the next $M_i$ is found.
\end{rem}

\begin{theorem}
Consider $L \in \Sigma [\partial]$ of order $n$ and level $M$.
\begin{enumerate}
    \item  If $(n,M)=1$ then Algorithm \ref{alg:graded_basis} computes the filtered basis of the centralizer $\centr (L)$.

    \item  If $\rho=(n,M)\neq 1$ then Algorithm \ref{alg:graded_basis} computes a basis of a $\coC[L]$-submodule $\cM$ of $\centr (L)$ that contains the maximal submodule $\cM_{\rho}$ of $\centr (L)$,
    \[\cM_{\rho}\subseteq \cM \subseteq \centr (L).\]
\end{enumerate}
\end{theorem}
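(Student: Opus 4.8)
The plan is to analyze what Algorithm \ref{alg:graded_basis} actually produces and compare it to the filtered basis from Theorem \ref{thm:widehat} and the maximal submodule $\cM_\rho$ from Theorem \ref{thm-Maxbasis}. The algorithm iterates $m$ from $M$ to $B=(d-1)M$, and at each step where the class $[m]_n$ has not yet been \emph{found}, it solves the optimized system $\gd^*_{n,m}(L)$, recording a new generator $G^*_{|\text{found}|}$ whenever a (necessarily unique) solution exists. The key observation is that the algorithm faithfully executes the iterative construction underlying Theorem \ref{thm:widehat}: by the remark following the algorithm, for each $m\notin[0]_n$ the optimized system has at most one solution until the next level $M_i$ is encountered, so the operators it collects are precisely the $G^*_i$ of the filtered basis, each order minimal in $\centr(L)$ with $\ord(G^*_i)=M_i$ in a strictly increasing sequence.

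For part (1), when $(n,M)=1$ we have $\rho=1$ and $d=n$, so $B=(n-1)M$. The plan is to invoke Theorem \ref{thm-Maxbasis} with $\rho=1$: here $\cM_1=\centr(L)$ since the condition $\rho\mid\ord(Q)$ is vacuous, and the bound $\ord(G_k)<(d-1)M=B$ guarantees that every generator of the full $\coC[L]$-basis has order strictly below the upper limit of the loop. Hence the iteration range $[M,B]$ is large enough to encounter every order class in $\cO=\bbZ_n$, and by Theorem \ref{thm:widehat} the collected operators form exactly the filtered basis $\cG^*(L)$ of $\centr(L)$. I would make this precise by noting that each $M_i$ satisfies $M_i\le M_1+(i-1)\cdot(\text{something})$, but the cleanest route is simply that the theorem's bound $\ord(G_k)<B$ ensures all $d$ basis elements are found before the loop terminates.

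For part (2), when $\rho=(n,M)\neq 1$, Theorem \ref{thm-Maxbasis} gives the $\coC[L]$-basis of $\cM_\rho$ consisting of $d=n/\rho$ operators $G_k$ with $\ord(G_k)\equiv_n k\rho$ and $\ord(G_k)<(d-1)M=B$. The plan is to argue that the submodule $\cM$ generated by the operators the algorithm returns satisfies $\cM_\rho\subseteq\cM\subseteq\centr(L)$. The right inclusion is immediate since every $G^*_i$ the algorithm produces lies in $\centr(L)$ (it is built from the specialized almost-commuting basis as a genuine solution of the stationary Lax equation, hence commutes with $L$). For the left inclusion $\cM_\rho\subseteq\cM$, I would show that every order class $[k\rho]_n$ realized by a generator of $\cM_\rho$ is eventually \emph{found} by the algorithm: since each such generator has order $<B$, its order class is detected at some iteration $m\le B$ where $\gd^*_{n,m}(L)$ has a solution, yielding an order-minimal operator in that class which generates the same part of the centralizer modulo lower order. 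Thus every basis element of $\cM_\rho$ is expressible in terms of the algorithm's output, giving $\cM_\rho\subseteq\cM$.

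The main obstacle will be justifying that the algorithm's output $\cM$ need not equal $\cM_\rho$ but may be strictly larger, which is exactly why the statement claims only a chain of inclusions rather than equality. The subtlety is that the optimized system $\gd^*_{n,m}(L)$ can detect order classes $[m]_n$ that are \emph{not} multiples of $\rho$ in $\bbZ_n$: since the loop removes only the classes already found (not all non-$\rho$-divisible classes a priori), the algorithm may pick up generators in classes outside $\cO_\rho$, producing a $\coC[L]$-module $\cM$ whose group of orders properly contains $\cO_\rho$. I would handle this by emphasizing that the bound $B=(d-1)M$ is calibrated to $\cM_\rho$ but does not preclude finding order-minimal operators in other classes within that range; the containment $\cM_\rho\subseteq\cM$ is what the bound guarantees, while equality holds precisely when no extra classes appear, i.e.\ when $(n,M)=1$ as in part (1).
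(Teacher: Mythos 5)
Your proposal is correct and takes essentially the same route as the paper's own (very terse) proof: the order bound $\ord(G_k)<(d-1)M$ from Lemma \ref{lem-Grho}/Theorem \ref{thm-Maxbasis} guarantees the loop range $[M,B]$ suffices to encounter every relevant order class, and Theorem \ref{thm:widehat} identifies each solution of an optimized system $\gd^*_{n,m}(L)$ with a filtered-basis element. You actually spell out the inclusion $\cM_{\rho}\subseteq\cM$ in part (2), and the reason $\cM$ may exceed $\cM_{\rho}$, more explicitly than the paper does.
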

\begin{proof}
    By Lemma \ref{lem-Grho} and Theorem \ref{thm:widehat},  Algorithm~\ref{alg:graded_basis}  computes the filtered basis $\cG^*(L)=\{1, G^*_1, \ldots , G^*_{n-1}\}$ of $\centr (L)$ as a $\coC [L]$-module. Otherwise it computes a basis of the $\coC [L]$-module
    \[\cM=\oplus_{k=0}^{|\text{found}|} \coC[L] G^*_k.\]
\end{proof}

\begin{rem}
Given the basis $\cB$ computed in Algorithm \ref{alg:graded_basis}, $\cM$ has rank
\[\rank(\cM)=\gcd\{\ord(G)\mid G\in \cB\}.\]
Observe that:
\begin{enumerate}
    \item  $|\text{found}|<n-1$ because once $|\text{found}|=n$ the instruction in the loop is not executed. The number of systems $\gd^*_{n,m}(L)$, as in \eqref{eq-wgdm}, to be solved is at most $(d-1) M$, with $d=n/(n,M)$ but in practice it may be much smaller, since the procedure will stop once $G^*_{n-1}$ has been found. Note the examples in Section \ref{sec:families}.

    \item $\cM$ may not be a maximal submodule of $\rank(\cM)$.

    \item $\cB$ contains a basis of $\cM_{\rho}$ and possibly a set of extra operators $\cE$ that it is worth to analyze. If $\cE$ contains an operator $G^*$ whose order $M^*$ is coprime with $n$, then we can replace the bound $B$ by $B^*=(n-1)M^*$ and continue executing the while loop in Algorithm \ref{alg:graded_basis} to reach a basis of $\centr (L)$.
\end{enumerate}
\end{rem}

The key to implement Algorithm~\ref{alg:graded_basis} is to work in a field $\Sigma$ where the instruction of line 6 can be performed, namely where constant solutions of systems of linear equations with coefficients in $\Sigma$ can be computed. This is the goal of Section \ref{sec:computable}.

\section{Linearly computable differential fields}\label{sec:computable}

In  Section~\ref{sec:flag}, we developed Algorithm~\ref{alg:graded_basis} to compute the filtered basis $\cG^*(L)$ of the centralizer $\centr(L)$ as a $\coC[L]$-module. For this purpose we studied the linear systems ~\eqref{equ:gd}, derived from the Gelfand-Dickey hierarchies, and the flag of constants $V_m$ obtained as the constant solutions of the homogeneous systems ~\eqref{equ:hgd}. The algorithm is based on computing the constant solutions of the optimized systems $\gd^*_{n,m}(L)$ in \eqref{eq-wgdm} and this is the purpose of this section.

It is important to remark that the linear systems  $\gd_{n,m}(L)$ in ~\eqref{equ:hgd} and $\gd^*_{n,m}(L)$ in \eqref{eq-wgdm}, have their coefficients in the differential field $\Sigma$, the differential field of coefficients of our differential operator $L$. On the other hand, the solutions we are looking for are in $\coC^{|J_{n,m}|}$ (instead of the usual $\Sigma^{|J_{n,m}|}$). Hence, classical algorithms to compute solutions of linear systems are not enough.
In this section, we are going to study a particular case where, with some structural information about the differential field $\Sigma$, we can actually perform all the necessary operations to precisely compute the solutions in $\coC^{|J_{n,m}|}$ of our linear system.

We propose to study computable differential fields, where all operations can be performed algorithmically. In particular, we are interested in those that have a zero-testing algorithm that is linear in some sense. The simplest computable differential fields of this kind that we encounter are the so-called monomial extensions \cite{Bron},  differential extensions $\coC\langle\eta\rangle$ by a transcendental element $\eta$ whose derivative belongs to $\coC\langle\eta\rangle$.

\begin{definition}
    Let $(\Sigma, \partial)$ be a differential field with field of constants $\coC$.
    A {\sf constant solution of a linear system} $S$, with coefficients in $\Sigma$ and $s$ unknowns, is a solution in $\coC^s$.
    We say that $\Sigma$ is {\sf linearly-computable} if it is computable and there is an algorithm that for any linear system with coefficients in $\Sigma$ computes all the constant solutions.
\end{definition}

In this paper, motivated by the main solutions of the KdV hierarchy \cite{Veselov}, we are going to consider a special type of linearly-computable differential extension of the field of constants $\coC$, although similar constructions and ideas could be applied to any linearly-computable differential field.

\begin{lemma}\label{lem:def_elliptic}
    Let us consider $(\coC,0)$ be a differential field, $\eta$ a transcendental element over $\coC$, and $\nu$ an algebraic element over $\coC(\eta)$. Then, there is a unique derivation $\partial$ such that  $(\coC(\eta)[\nu],\partial)$, with $\nu=\partial(\eta)$, is a differential extension of $(\coC, 0)$.
\end{lemma}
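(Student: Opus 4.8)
The plan is to construct the derivation explicitly and then verify uniqueness, splitting the construction into two stages corresponding to the two algebraic layers $\coC(\eta)$ and $\coC(\eta)[\nu]$. First I would handle the purely transcendental extension $\coC(\eta)$. Since $\eta$ is transcendental over $\coC$, every element of $\coC[\eta]$ is a polynomial $p(\eta)=\sum_i a_i \eta^i$ with $a_i\in\coC$, and once we declare $\partial(\eta)=\nu$ and require $\partial|_{\coC}=0$, the Leibniz rule and additivity force
\begin{equation*}
    \partial(p(\eta))=\sum_i i\, a_i \eta^{i-1}\nu = p'(\eta)\,\nu,
\end{equation*}
where $p'$ is the formal derivative. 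This defines $\partial$ on $\coC[\eta]$ unambiguously; one checks it is additive and satisfies Leibniz directly on monomials, hence on all of $\coC[\eta]$. The extension to the fraction field $\coC(\eta)$ is then forced by the quotient rule, $\partial(p/q)=(\partial(p)q-p\,\partial(q))/q^2$, which is well-defined because $\coC(\eta)$ is a field and is the unique derivation extending the one on $\coC[\eta]$.

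Next I would extend $\partial$ across the algebraic layer to $\coC(\eta)[\nu]$. Let $\mu(T)\in\coC(\eta)[T]$ be the minimal polynomial of $\nu$ over $\coC(\eta)$. Any derivation extending the one already built on $\coC(\eta)$ must satisfy, by differentiating the relation $\mu(\nu)=0$,
\begin{equation*}
    \mu^{\partial}(\nu)+\mu'(\nu)\,\partial(\nu)=0,
\end{equation*}
where $\mu^{\partial}$ denotes the polynomial obtained by applying $\partial$ to each coefficient of $\mu$ and $\mu'$ is the formal $T$-derivative. Since the characteristic is zero and $\mu$ is irreducible (hence separable), $\mu'(\nu)\neq 0$, so this equation determines $\partial(\nu)$ uniquely as $\partial(\nu)=-\mu^{\partial}(\nu)/\mu'(\nu)\in\coC(\eta)[\nu]$. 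I would then define $\partial$ on a general element $\sum_j b_j \nu^j$ (with $b_j\in\coC(\eta)$) by additivity and Leibniz, using the already-determined value of $\partial(\nu)$, and verify that this prescription is consistent, i.e. independent of the polynomial representative modulo $\mu$. Consistency is the point where one must be careful: the definition respects the ideal $(\mu(\nu))$ precisely because $\partial(\mu(\nu))=0$ by the displayed relation, so the map descends to the quotient $\coC(\eta)[T]/(\mu)$.

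The main obstacle is exactly this well-definedness on the algebraic layer, namely showing that the naive formula on polynomial representatives is compatible with the defining relation $\mu(\nu)=0$; everything else is a routine verification of the derivation axioms. The uniqueness statement is the easy half once the formulas above are in place, since at each stage the value of $\partial$ was \emph{forced} by the axioms. One subtlety worth flagging is that the lemma as stated writes $\coC(\eta)[\nu]$, implicitly asserting this ring is already a field (so that no separate localization step is needed); this holds because $\nu$ is algebraic over the field $\coC(\eta)$, so $\coC(\eta)[\nu]=\coC(\eta)(\nu)$ is a finite field extension, and the derivation we construct automatically extends to it by the quotient rule, matching the uniqueness we proved on the polynomial ring.
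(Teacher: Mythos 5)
Your proof is correct and follows essentially the same route as the paper's: both force the value of $\partial(\nu)$ by differentiating the minimal-polynomial relation (using separability in characteristic zero to invert the formal derivative at $\nu$, where the paper instead cites Bronstein for the inverse $s(\nu)$ of $\partial_y(p)(\nu)$), and then define $\partial$ on all of $\coC(\eta)[\nu]$ via the Leibniz-forced formula, which simultaneously yields uniqueness. The only differences are presentational: you split the construction into the transcendental layer $\coC(\eta)$ and the algebraic layer, and you explicitly verify well-definedness modulo the minimal polynomial, a point the paper's proof leaves implicit.
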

\begin{proof}
    First, we need to remark that $\partial(\eta)$ determines the value of $\partial(\nu)$. Since $\nu$ is algebraic over $\coC(\eta)$, there is a monic irreducible polynomial $p(y) \in \coC(\eta)[y]$ such that $p(\nu) = 0$. Hence, for any derivation over $\coC(\eta)[\nu]$ it holds that $\partial(p(\nu)) = 0$. Similar to Theorem~3.2.3 from~\cite{Bron}, there is a polynomial $s(y) \in \coC(\eta)[y]$ such that $s(\nu)\partial_y(p)(\nu) = 1$, so we have
    \[\partial(\nu) = -s(\nu)\kappa_\partial(p)(\nu),\]
    where $\kappa_\partial$ is the derivation on $\coC[\eta,y]$ defined by $\kappa_\partial(\eta) = \partial(\eta)$ and $\kappa_\partial(y) = 0$.

    Then, we can show that, for any $f(\eta,\nu) \in \coC(\eta)[\nu]$, we can write
    \[\partial(f(\eta,\nu)) = \kappa_\partial(f)(\eta,\nu) + \partial(\nu)\partial_y(f)(\eta,\nu).\]
    Hence, given $\partial(\eta)$, we have defined the values for $\partial(f(\eta,\nu))$ for all $f(\eta,\nu) \in \coC(\eta)[\nu]$. In particular, there is a unique derivation extending the zero derivation over $\coC$ such that $\partial(\eta) = \nu$.
\end{proof}
\begin{definition}\label{def-elliptic}
Consider a transcendental element $\eta$ over $\coC$, and $\nu$ an algebraic element over $\coC(\eta)$ of degree at most two, i.e. $\nu^2 = \gamma_0(\eta)+\gamma_1(\eta)\nu$ for an irreducible polynomial $y^2 -\gamma_1(\eta)y-\gamma_0(\eta)$ with coefficients in $\coC(\eta)$. We call {\sf hyperelliptic extension} of $\coC$ by $\eta$, and denote it by $\coC\langle\eta\rangle$, to the differential field extension $(\coC(\eta)[\nu],\partial)$ where $\partial(\eta) =\nu$.
\end{definition}

This definition is satisfied by some elementary functions over $\coC=\bbC$ taking $\partial=d/dx$. For example, if we consider $\eta=e^x$, then $\eta' = e^x$ is in $\bbC(\eta)$. In the case of hyperbolic trigonometric functions, such as $\eta=\cosh(x)$, we have $\eta'=\sinh(x)$ with the identity $\eta'^2 = 1-\eta^2$. Finally, the Weierstrass $\wp(x,g_2,g_3)$, $g_2,g_3\in \bbC$, function also falls into this setting, since we know that $\wp'^2=\wp^3+g_2\wp+g_3$.

\begin{proposition}\label{prop:diff_computable}
    Let $\coC\langle \eta\rangle$ be a hyperelliptic differential extension of $\coC$. Then $\coC\langle \eta\rangle$ is linearly-computable.
\end{proposition}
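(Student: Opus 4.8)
The plan is to show that the three field operations—addition, multiplication, zero-testing, and crucially the computation of constant solutions of a linear system—can all be carried out algorithmically in $\coC\langle\eta\rangle = (\coC(\eta)[\nu],\partial)$. First I would observe that every element of $\coC\langle\eta\rangle$ has the canonical form $f(\eta) + g(\eta)\nu$ with $f,g\in\coC(\eta)$, because $\nu^2 = \gamma_0(\eta)+\gamma_1(\eta)\nu$ lets us reduce any power of $\nu$ back to degree at most one. Arithmetic and zero-testing in this representation reduce to rational-function arithmetic over $\coC$, which is computable by assumption on $\coC$; so the only substantive task is the linear-solving part of the definition of linearly-computable.

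The heart of the argument is the reduction of a linear system with coefficients in $\coC\langle\eta\rangle$, whose solutions are sought in $\coC^s$, to an (extended) linear system with coefficients in $\coC$ itself. Given a single equation $\sum_{i} a_i c_i = 0$ with $a_i\in\coC\langle\eta\rangle$ and unknowns $c_i$ ranging over the constants $\coC$, I would write each coefficient as $a_i = f_i(\eta)+g_i(\eta)\nu$. Clearing denominators and collecting, the left side becomes an element $F(\eta)+G(\eta)\nu$ of $\coC[\eta][\nu]$ whose coefficients $F,G$ are polynomials in $\eta$ that are \emph{linear} in the unknowns $c_i$. Since $1$ and $\nu$ are linearly independent over $\coC(\eta)$ (as $y^2-\gamma_1 y-\gamma_0$ is irreducible, $\nu\notin\coC(\eta)$) and $\eta$ is transcendental over $\coC$, the expression $F(\eta)+G(\eta)\nu$ vanishes if and only if every $\coC$-coefficient of every power $\eta^k$ in both $F$ and $G$ vanishes. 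Each such coefficient is a $\coC$-linear form in the $c_i$, so the original system is equivalent to a finite, explicitly constructible linear system over $\coC$, whose solution space in $\coC^s$ is computable by ordinary linear algebra over the computable field $\coC$.

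In more detail, the key step I would spell out is that zero-testing in $\coC\langle\eta\rangle$ is \emph{linear} in the sense the paper requires: membership $F(\eta)+G(\eta)\nu = 0$ decomposes into the conjunction of the scalar equations $[\eta^k]F = 0$ and $[\eta^k]G = 0$, each a $\coC$-linear condition on the unknowns. Assembling these conditions over all equations of the system and all the finitely many monomials $\eta^k$ that appear after clearing a common denominator yields the extended system over $\coC$; solving it and pulling the solutions back gives exactly the constant solutions of the original system. This establishes that $\coC\langle\eta\rangle$ is linearly-computable.

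The main obstacle—really the only delicate point—is the passage from ``$F(\eta)+G(\eta)\nu=0$ as an element of $\coC\langle\eta\rangle$'' to ``all $\coC$-coefficients vanish.'' This requires two facts handled with care: that $\{1,\nu\}$ is a basis of $\coC(\eta)[\nu]$ over $\coC(\eta)$, which follows from the assumed irreducibility of $y^2-\gamma_1(\eta)y-\gamma_0(\eta)$ in Definition~\ref{def-elliptic}, and that $\eta$ being transcendental over $\coC$ forces a polynomial in $\eta$ to be zero iff all its $\coC$-coefficients are zero. One must also check that clearing denominators (the $\gamma_i$ and the denominators of the $f_i,g_i$) does not disturb the constant-solution set, i.e. that the cleared equation is equivalent to the original over the locus where the denominators are nonzero—this is routine since the denominators lie in $\coC(\eta)$ and are nonzero as field elements. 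With these linear-independence and transcendence facts in hand, the construction of the extended $\coC$-system is purely mechanical.
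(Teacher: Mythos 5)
Your proposal is correct and follows essentially the same route as the paper's proof: write each coefficient in the form $f(\eta)+g(\eta)\eta'$ (your $\nu$ is exactly $\partial(\eta)=\eta'$), clear denominators, and use the transcendence of $\eta$ together with the $\coC(\eta)$-linear independence of $\{1,\nu\}$ to split each equation into $\coC$-linear conditions on the unknowns, yielding an extended system over $\coC$ solvable by ordinary linear algebra. The only difference is presentational: you make explicit two facts the paper leaves implicit (that $\{1,\nu\}$ is a basis via irreducibility, and that clearing denominators preserves the constant-solution set), which is a welcome but not substantive refinement of the same argument.
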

\begin{proof}
An hyperelliptic differential extension is defined by an irreducible polynomial $y^2-\gamma_1(\eta)y-\gamma_0(\eta)$ with coefficients in $\coC(\eta)$.
Thus $\coC\langle \eta\rangle$ is isomorphic to the fraction field of the domain
$\frac{\coC(\eta)[y]}{(y^2-\gamma_1(\eta)y-\gamma_0(\eta))}$.
It is also computable as a field. Moreover, we can use $y^2-\gamma_1(\eta)y-\gamma_0(\eta)=0$ to compute the derivative of $\eta'$, hence we can compute the derivative of any element in $\coC\langle \eta\rangle$.

Now, let $S = \{e_i = 0\mid i=1,\ldots,r\}$ be a linear system with coefficients in $\coC\langle\eta\rangle$ and unknowns $a_1, \ldots ,a_s$. Let $$e_i = \alpha_1 a_1  + \ldots + \alpha_s a_s , \mbox{ where } \alpha_k = \frac{p_{k,1}(\eta)}{q_{k,1}(\eta)}\eta' + \frac{p_{k,0}(\eta)}{q_{k,0}(\eta)},$$
    where $p_{k,j}$ and $q_{k,j}$ are polynomials in $\coC[\eta]$.
    Using the fact that $\coC\langle\eta\rangle[a_1,\ldots,a_s] \simeq \coC[a_1,\ldots a_s]\langle\eta\rangle$, we can rewrite equation $e_i$ as a polynomial in $\eta'$ whose coefficients are rational functions in $\coC[a_1,\ldots,a_s](\eta)$:
    \[e_i = \frac{\Delta_0}{\Lambda_0} +  \frac{\Delta_1}{\Lambda_1}\eta'\]
    where $\Lambda_j=\text{lcm}(q_{k,j})$, $D_{k,j}=\Lambda_j q_{k,j}$ and
    \[
    \Delta_j=\sum_{k=1}^s D_{k, j} p_{k,j}(\eta) a_k,\,\,j=0,1.
    \]
    Computing the constant solutions in $\coC^s$ of $e_i = 0$ is equivalent to equating $\Delta_0(\eta) = 0$ and $\Delta_1 (\eta) = 0$. Since $\eta$ is transcendental over $\coC$, this is equivalent to equating all the coefficients of the monomials $\eta^h$ in $\Delta_j$ to zero, which are linear polynomials in $\coC[a_1,\ldots,a_s]$. Gathering all the coefficients of $\eta^h$ in $\Delta_0$ and $\Delta_1$ for each $e_i$, we obtain a system $\tilde{S}$ of linear equations in $a_k$ with coefficients in $\coC$.
We conclude that the constant solutions of the linear system $S$ are the  solutions of the extended system $\tilde{S}$.
\end{proof}

Let us denote by $\mathbb{M}_{r\times s}(K)$ the ring of matrices with coefficients in a field $K$.
We represent a system $S$ with $r$ linear equations, with coefficients in $\coC\langle \eta\rangle$, and $s$ unknowns, by a matrix in $\mathbb{M}_{r\times s}(\coC\langle \eta\rangle)$. The goal of Algorithm~\ref{alg:extend_system} is to obtain an extended linear system $\tilde{S}$ with coefficients in $\coC$, whose solution set equals the set of constant solutions of $S$, the solutions in $\coC^s$ of $S$.

Algorithm~\ref{alg:extend_system} is based on collecting the coefficients of a polynomial in the variable $\eta$. Given $p\in \coC [\eta]$ we denote by $\coeff(p,\eta^h)$ the coefficient of $\eta^h$ in $p$. Each equation $e$ of the system $S$ can be written as
\begin{equation}\label{equ:equations}
    e=\frac{\Delta_0}{\Lambda_0} + \frac{\Delta_1}{\Lambda_1} \eta',
\end{equation}
where $\Delta_j$ are polynomials in $\eta$, whose sets of non-zero coefficients $T_{e,j}$ form the linear equations of the extended system $\tilde{S}$.
Observe that the total number of equations of $\tilde{S}$ is
\begin{equation}\label{equ:monomials}
    T=\sum_{e\in S} (|T_{e,0}|+|T_{e,1}|).
\end{equation}

\begin{algorithm}[!ht]
\caption{\texttt{Extend\_system}}\label{alg:extend_system}
    \Input{Hyperelliptic differential extension $\coC\langle \eta\rangle$ of $\coC$; linear system $S \in \mathbb{M}_{r\times s}(\coC\langle \eta\rangle)$ with $s$ unknowns}
    \Output{Extended linear system $\tilde{S}\in \mathbb{M}_{T\times s}(\coC)$ whose solution set is the set of constant solutions of $S$}

    $\tilde{S} \gets []$\;
    \For{$e$ row of $S$}{%
        Assume $e = (\alpha_1,\ldots,\alpha_s)$ with $\alpha_i = \frac{p_{i,1}(\eta)}{q_{i,1}(\eta)}\eta' + \frac{p_{i,0}(\eta)}{q_{i,0}(\eta)}$\;
        $D_{i,j} = \text{lcm}(q_{k,j}(\eta)\ :\ k=1,\ldots,s) / q_{i,j}(\eta)$\;
        $T_{e,j} \gets \{h \in \mathbb{N}\ \mid\ \coeff(p_{i,j}(\eta) D_{i,j}, \eta^h)\neq 0
        \text{ for some }i=1,\ldots,s\}$\;
        \For{$j=0,1$}{%
            \For{$h \in T_{e,j}$}{%
                Append row
                $\left(\coeff(p_{i,j}(\eta) D_{i,j}, \eta^h) \ :\ i=1,\ldots,s\right)$
                to $\tilde{S}$\;
        }}
    }
    \Return $\tilde{S}$\;
\end{algorithm}

Algorithm~\ref{alg:get_system} builds on Algorithm~\ref{alg:extend_system} to define the linear systems to solve in order to compute elements of the centralizer. More precisely, if we provide a finite set $J$ of positive integers, then any solution to the inhomogeneous linear system returned by Algorithm~\ref{alg:get_system} corresponds to an element of the centralizer $\centr(L)$ of order exactly the maximum $\max(J)$ of $J$,
\begin{equation}
    F_J:=\{\xi\in \coC^{|J|}\mid \xi \mbox{ is a solution of } \overline{\sigma}_{m}+\sum_{j\in J} c_j \overline{\sigma}_j=0,\ m= \max(J)\}.
\end{equation}
Of course, it may happen that $F_J$ is the empty set.
If $J = J_{n,m}$, then the system is $\gd_{n,m}(L)$ and its solution set is precisely $F_m$. In Algorithm \ref{alg:goodearl_basis} we will consider $J = J^*_{n,m}$, in which case Algorithm \ref{alg:get_system} builds the system $\gd^*_{n,m}(L)$ in \eqref{eq-wgdm}.

\begin{algorithm}[!ht]
\caption{\texttt{Build\_system}}\label{alg:get_system}
    \Input{Hyperelliptic differential extension $\coC\langle \eta\rangle$ of $\coC$; $\upsilon_L=\{\upsilon_2,\ldots,\upsilon_n\}\subset \coC\langle \eta\rangle$; finite set $J \subset \bbN$}
    \Output{A linear system $(A|b)\in \mathbb{M}_{T\times |J|}(\coC)$, whose solution set is $F_J$
}

    $\sigma_{k,j} \gets \cE_L (H_{k,j})$\tcp*{  specialization in~\eqref{eq-sigmakj}}
    $S \gets \left(\sigma_{k,j}\ \mid\ k=0,\ldots,n-2,\quad j\in J\right) \in \mathbb{M}_{(n-1)\times |J|}(\coC\langle \eta\rangle)$\;
    $\tilde{S} \gets \texttt{Extend\_system}(S)$ \tcp*{$\tilde{S} \in \mathbb{M}_{ T\times |J|}(\coC)$}
    $A \gets$ first $|J|-1$ columns of $\tilde{S}$\;
    $b \gets$ last column of $\tilde{S}$\tcp*{last column is linked to $\max(J)$}
    \Return $(A|-b)$\;
\end{algorithm}

With Algorithm~\ref{alg:get_system}, we have all the necessary tools to apply all the results of Section~\ref{sec:flag} in order to implement Algorithm~\ref{alg:graded_basis}. We rewrite the algorithm in Algorithm  ~\ref{alg:goodearl_basis}, \texttt{Filtered\_basis\_computable} assuming now that the coefficients $\upsilon_L=\{\upsilon_2,\ldots,\upsilon_n\}$ of $L$ belong to an hyperelliptic differential field $\coC\langle \eta\rangle$. We can now replace instruction 6 of Algorithm~\ref{alg:graded_basis} by a call to \texttt{Build\_system} in Algorithm ~\ref{alg:goodearl_basis}.

\begin{algorithm}[!ht]
\caption{\texttt{Filtered\_basis\_computable}}\label{alg:goodearl_basis}
    \Input{Hyperelliptic differential extension $\coC\langle \eta\rangle$ of $\coC$; $\upsilon_L=\{\upsilon_2,\ldots,\upsilon_n\}\subset \coC\langle \eta\rangle$; the level $M$ of $L$; bound $B= (n-1)M$ if $(n,M)=1$ or $B\geq (n-1)M$ otherwise}
    \Output{Filtered basis of $\centr_B (L)/\coC_B[L]$}

    $\text{found} \gets \{[0]_n\}$\;

    \For{$m=M,\ldots,B$}{%
        \If{$[m]_n \notin \text{found}$}{%
            $(A|b) \gets \texttt{Build\_system}(\upsilon_L, J^*_{n,m})$\;
            $\xi \gets \texttt{solve}(A, b)$\tcp*{Solving the linear system in $\coC$}
            \uIf{$\xi\text{\emph{ \texttt{"exists"}}}$}{%
                $G^*_{|\text{found}|} \gets \sum_{j\in J} \xi_j B_j + B_m$\tcp*{As in\eqref{eq-wgdm}}
                $\text{found} \gets \text{found} \cup \{[m]_n\}$\;
            }
        }
    }
    \Return $[1,G^*_k, k=1,\ldots ,|\text{found}|]$\; 
\end{algorithm}

\begin{rem}
If $(n,M)=1$, the search bound is $B=(n-1)M$ and output is the filtered basis $\{1,G^*_1,\ldots,G^*_{n-1}\}$ of $\centr(L)$ as a $\coC [L]$-module.

If $(n,M)\neq 1$, the search bound can be chosen $B\geq (n-1)M$, normally equal.
\end{rem}

\begin{example}[Continuation of Example~\ref{exm:hierarchy:3:4:eval}, Hyperbolic]\label{exm:centralizer:3:4:hyperbolic}
Since the field $\Sigma=\bbC\langle \cosh(x)\rangle$ is a hyperelliptic differential extension we can apply the previous algorithms. Let us run Algorithm~\ref{alg:goodearl_basis} with $\upsilon_L=\{v_2=6, v_3=0\}$, $M=4$ and $B=(n-1) M=8$. To start $\text{found}=\{[0]_3\}$.

We start the for loop in Step 3 with $m=4$ and call $\texttt{Build\_system}(\upsilon_L, J^*_{3,4})$, with $J^*_{3,4}=\{1,2,4\}$.
In Example ~\ref{exm:hierarchy:3:4:eval}, Hyperbolic, denoting $\eta=\cosh(x)$, we computed $\hgd_{3,4}(L)$ with coefficient matrix $S(3,4)$ whose columns are indexed by $c_1$, $c_2$ and $c_4$
    \[S(3,4)=\begin{pmatrix}
0 & \displaystyle\frac{48-32\eta^2}{\eta^5}\eta' & \displaystyle\frac{192-128\eta^2}{3\eta^5}\eta'\\
\displaystyle\frac{12 \eta'}{\eta^3} & \displaystyle\frac{24\eta^2 - 36}{\eta^4} & \displaystyle\frac{32\eta^2 - 48}{\eta^4}
\end{pmatrix}.\]
Let us build the extended linear system $\tilde{S}(3,4)$ with coefficients in $\bbC$.
For each row, Algorithm~\ref{alg:extend_system} clears denominator and collects the elements with the same monomials. As an example, take the second row of $S(3,4)$, multiplying by $\eta^4$ we obtain
    \[\left(12\eta'\eta, 24\eta^2 - 36, 32 \eta^2 - 48\right).\]
Collecting coefficients of the monomials $\eta\eta'$, $\eta^2$ and $1$ we obtain the last three rows of
    \[\tilde{S}(3,4)=\begin{pmatrix}
        0 & 48 & 64\\
        0 & -32 & -\frac{128}{3}\\
        12 & 0 & 0 \\
        0 & 24 & 32 \\
        0 & -36 & -48
    \end{pmatrix}\begin{array}{l} \\   \\\rightarrow \eta\eta' \\ \rightarrow \eta^2 \\ \rightarrow 1
    \end{array}\]
Solving the system in $c_1$, $c_2$ and $c_4$ with this coefficient matrix we obtain $V_4=\langle (0, \frac{-4}{3},1)\rangle$, the $\coC$-vector space of constant solutions of $\hgd_{3,4}$ is, see \eqref{eq-Vm}.
The first iteration of
Algorithm~\ref{alg:goodearl_basis}  returns the operator
    \[G^*_1=B_4-\frac{4}{3}B_2= \partial^4 +\left(\frac{24 -4 \eta^{2}}{3 \eta^{2}}\right)\partial^2+ \left(\frac{-8\eta'}{\eta^{3}} \right)\partial,\]
and sets $\text{found}=\{[0]_3, [1]_3\}$.

In the second iteration with $m=5$ and call $\texttt{Build\_system}(\upsilon_L, J^*_{3,5})$, with $J^*_{3,5}=\{1,2,5\}$. Since we have found an element in the centralizer of order $4$, we are going to look for a linear combination $c_1 B_1 + c_2 B_2 + c_5B_5$, to obtain the filtered basis. The system $\gd^*_{3,5}(L)$ has  coefficient matrix $S(3,5)$ with columns indexed by $c_1,c_2,c_5$ and extending this matrix with Algorithm~\ref{alg:extend_system} we obtain $\tilde{S}(3,5)$:
\[S(3,5)=\begin{pmatrix}
0 & \displaystyle\frac{48-32\eta^2}{\eta^5}\eta' & 0\\
\displaystyle\frac{12 \eta'}{\eta^3} & \displaystyle\frac{24\eta^2 - 36}{\eta^4} & \displaystyle\frac{-64\eta'}{3\eta^3}
\end{pmatrix},\quad \tilde{S}(3,5) = \begin{pmatrix}0 & 48 & 0 \\
0 & -32 & 0 \\
12 & 0 & -\frac{64}{3} \\
0 & -36 & 0 \\
0 & 24 & 0\end{pmatrix}.\]
This matrix represents the homogeneous system, whose solutions are of the form $\tilde{V}_5 = \langle(16/9,0,1)\rangle$. Hence
\begin{align*}
    G^*_2 = B_5+\frac{16}{9}B_1 & = \partial^5 + \frac{10}{\eta^2}\partial^3 - 20\frac{\eta'}{\eta^3}\partial^2
     + \left(\frac{16\eta^4+240\eta^2-180}{9\eta^4}\right)\partial.
\end{align*}
Now $\text{found}=\{[0]_3, [1]_3, [2]_3\}$.
This concludes the execution of Algorithm~\ref{alg:goodearl_basis}, since from $m=6,\ldots ,B=8$, $[m]_3\in \text{found}$.
Namely, the filtered basis of the centralizer $\centr (L)$ is $\{1,G^*_1,G^*_2\}$, where the orders of the operators are $M_0=0$, $M_1=4$ and $M_2=5$.
\end{example}

\begin{example}\label{exm:Zheglov}\cite[Example 22]{PZ}
    In~\cite{PZ}, an algorithm is designed, using the Sato operator, to compute pairs of commuting differential operators of a given rank. In Example~22, a pair of ODOs of orders $4$ and $6$ with rational coefficients is provided. Let us consider the operator $L$ of order $4$
\begin{align*}
\partial^4- 20 \frac{x^3(x^5-120)}{
(x^5 + 30)^2} \partial^2-3000
\frac{x^2(7x^5-90)}{(x^5 + 30)^3} \partial + 18000
\frac{x(3x^{10}-145x^5 + 450)}{
(x^5 + 30)^4}.
\end{align*}
    From this information we cannot determine the rank of the centralizer $\centr (L)$, we can only say that is less than or equal to $2$, see Remark \ref{rem-Zheglov}.

    We checked that $L$ has level $M=6$, i.e. there is not a non-trivial element in $\centr(L)$ of order smaller than 6. We can do so by running the for loop in Algorithm ~\ref{alg:goodearl_basis}, for $m=0,\ldots ,6$. By means of Algorithm~\ref{alg:goodearl_basis} we can prove that $\rank (\centr(L))=1$. We run Algorithm~\ref{alg:goodearl_basis} with $B=(n-1) M=18$ and obtain the filtered basis $\{1, G_1^*, G_2^*,G_3^*\}$ as a $\coC[L]$-module:
    \begin{itemize}
        \item $G_1^*$ has order $M_1=6$ coincides with the operator $P_6$ in ~\cite[Example 22]{PZ}.
        \item $G_2^*$ is an operator of order $M_2=7$. The coefficients of this operator are quotients of polynomials of degree bounded by $23$.
        \item $G_3^*$ is an operator of order $M_3=9$, whose coefficients are quotients of polynomials of degree bounded by $31$.
    \end{itemize}
    We refer to Section~\ref{sec:implementation} for indications on how to access the precise values of these operators.
\end{example}

\section{Level varieties}\label{sec-levelvar}

In this section we will design an algorithm to compute families of differential operators with non trivial centralizers, assuming that their coefficients belong to a fixed differential field dependent on a finite set $\Theta$ of algebraic parameters over the field of constants $\coC$.

Let us consider the field $\coC(\Theta)$ of rational functions in $\Theta$ to define a hyperelliptic differential extension $\bbK=\coC(\Theta)\langle \eta\rangle$ of $\coC(\Theta)$, as in Definition \ref{def-elliptic}, with the extended derivation $\partial$ such that $\partial \theta=0$, for every $\theta\in \Theta$. Given a fixed set of functions $f_2,\ldots, f_n \in \bbK$, we consider the following ansatz operator
\begin{equation}\label{eq-bbL}
\bbL= \partial^n + f_2\partial^{n-2}+\ldots+f_n.
\end{equation}
Our strategy will be to fix $M\in \bbN$ and obtain polynomial relations of the variables in $\Theta$ that force $\bbL$ to have level $M$, obtaining a family of operators $\bbL_M$ of level $M$.

Given $\zeta\in \coC^{|\Theta|}$ and replacing $\Theta$ by $\zeta$ in $\bbL$ we obtain a differential operator $\bbL(\zeta)$ in $\coC\langle \eta\rangle[\partial]$.
We know that $\bbL(\zeta)$ has level $M$ if the system $\gd_{n,M}(\bbL(\zeta))$ has a unique solution, by Lemma \ref{lem-flag}, if $F_M\neq \emptyset$, see~\eqref{eq-Fm}. Using  Algorithm~\ref{alg:get_system} with $\upsilon_{\bbL}=\{f_2,\ldots ,f_n\}\subset \bbK$ and $J=J_{n,M}$, we obtain a linear system with coefficient matrix $S_{\Theta}(n,M)= (A|b)$ with entries in $\coC (\Theta)$. This is a key fact in the development of the next algorithm since $F_M\neq \emptyset$ whenever the system $S_{\Theta}(n,M)$ has a unique solution $(\xi,1)$ with $\xi\in \coC^{|J_{n,M-1}|}$. However, imposing a unique solution involves a set of inequalities.
On the other hand, studying the set  $V_M(\bbL(\zeta))$  of constant solutions of $\hgd_{n,M}(\bbL(\zeta))$ as in \eqref{eq-Vm}, and imposing a non-zero solution is easy to handle algebraically.

\begin{definition}
Given $m\in\bbN$, we define the $m$-th {\sf level variety} of $\bbL$ to be
\[\cX_m=\{\zeta\in \coC^{|\Theta|} \mid V_m(\bbL(\zeta))\neq \{0\}\}.\]
\end{definition}

To impose the condition
$V_m(\bbL(\zeta))\neq \{0\}$, the rank of the matrix $S_{\Theta}(n,m)$ must be smaller than the number of variables $|J_{n,m}|$, assuming that the number of rows $T$ is greater than $|J_{n,m}|$. That is the case when all minors of size $|J_{n,m}|$ of the matrix $S_{\Theta}(n,m)$ must vanish. These minors are rational functions in $\coC(\Theta)$ and they all must be zero at the same time.

\begin{definition}
    Given $m\in\bbN$, we define the $m$-th {\sf level ideal } of $\bbL$ to be the ideal $I_m$ in $\coC[\Theta]$  generated by the numerators of the minors of size $|J_{n,m}|$ of $S_{\Theta}(n,m)$, assuming that $T\geq |J_{n,m}|$.
\end{definition}

\begin{lemma}\label{lem-chainvar}
Given $\bbL$ as in \eqref{eq-bbL}, then $\cX_m $ is the algebraic variety defined by the level ideal $I_m$
\[\cX_m=V(I_m)=\{\zeta\in \coC^{|\Theta|} \mid g(\zeta)=0, \forall g\in I_M\}.\]
Moreover, $\cX_{m-1}\subseteq \cX_m$, for every $m\geq 1$.
\end{lemma}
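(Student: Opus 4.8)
The plan is to establish the two assertions in turn: the determinantal description $\cX_m=V(I_m)$ and then the nesting $\cX_{m-1}\subseteq\cX_m$.

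\emph{The identity $\cX_m=V(I_m)$.} By definition $\zeta\in\cX_m$ precisely when $V_m(\bbL(\zeta))\neq\{0\}$. The proof of Proposition~\ref{prop:diff_computable}, applied to the homogeneous system $\hgd_{n,m}(\bbL(\zeta))$ over $\coC\langle\eta\rangle$, identifies $V_m(\bbL(\zeta))$ with the kernel in $\coC^{|J_{n,m}|}$ of the extended coefficient matrix, and this matrix is $S_{\Theta}(n,m)$ specialized at $\Theta=\zeta$. Since $S_{\Theta}(n,m)$ has $T\geq|J_{n,m}|$ rows and $|J_{n,m}|$ columns, its specialization has nontrivial kernel if and only if its rank is strictly below $|J_{n,m}|$, i.e. if and only if every maximal minor (of size $|J_{n,m}|$) vanishes at $\zeta$. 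Each such minor is a rational function $p/q\in\coC(\Theta)$ that vanishes at a point of its domain exactly when its numerator $p$ does, and by definition the numerators $p$ generate $I_m$; hence $\zeta\in\cX_m$ if and only if $\zeta\in V(I_m)$.

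\emph{The step I expect to be delicate} is justifying that $S_{\Theta}(n,m)|_{\Theta=\zeta}$ really is the extended matrix of $\hgd_{n,m}(\bbL(\zeta))$ and that passing to numerators records the rank drop faithfully. Because the $H_{k,j}$ are fixed differential polynomials and $\Theta\mapsto\zeta$ is a homomorphism of differential rings commuting with the specialization $\cE$, one has $\sigma_{k,j}|_{\Theta=\zeta}=\cE_{\bbL(\zeta)}(H_{k,j})$; the only way specialization can fail to commute with the construction of Algorithm~\ref{alg:extend_system} is through the clearing of $\eta$-denominators (the $\text{lcm}$ step), should a denominator of some $f_i$ or a leading coefficient degenerate at $\zeta$. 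I would therefore restrict attention to the locus on which $\bbL(\zeta)$ is a well-defined operator in $\coC\langle\eta\rangle[\partial]$—the only points for which $V_m(\bbL(\zeta))$ is defined at all—where the common denominator $q$ is nonzero and the minor $p/q$ vanishes iff $p(\zeta)=0$.

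\emph{The inclusion $\cX_{m-1}\subseteq\cX_m$.} Here I would argue pointwise. Fix $\zeta\in\cX_{m-1}$, so $V_{m-1}(\bbL(\zeta))\neq\{0\}$. If $m\in n\bbZ$ then $J_{n,m}=J_{n,m-1}$ and $\overline{\sigma}_m=0$ (since $P_m=L^{m/n}$ commutes with $L$), so $\hgd_{n,m}$ and $\hgd_{n,m-1}$ coincide and $V_m(\bbL(\zeta))=V_{m-1}(\bbL(\zeta))\neq\{0\}$. If $m\notin n\bbZ$ then $J_{n,m}=J_{n,m-1}\cup\{m\}$, and appending the coordinate $c_m=0$ sends any nonzero constant solution of $\hgd_{n,m-1}(\bbL(\zeta))$ to a nonzero constant solution of $\hgd_{n,m}(\bbL(\zeta))$; this is exactly the natural embedding $V_{m-1}\hookrightarrow V_m$ of the flag of constants in Lemma~\ref{lem-flag}, part 1, applied to $\bbL(\zeta)$. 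In either case $V_m(\bbL(\zeta))\neq\{0\}$, hence $\zeta\in\cX_m$, which gives the asserted inclusion.
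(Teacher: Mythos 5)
Your proof is correct and follows essentially the same route as the paper: the identity $\cX_m=V(I_m)$ is exactly the paper's preceding discussion (rank drop of $S_{\Theta}(n,m)$ iff all maximal minors vanish iff their numerators vanish), and the inclusion $\cX_{m-1}\subseteq\cX_m$ is obtained, as in the paper, from $V_{m-1}(\bbL(\zeta))\subseteq V_m(\bbL(\zeta))$ via Lemma~\ref{lem-flag}. Your extra care about specialization commuting with the denominator-clearing in Algorithm~\ref{alg:extend_system} fills in a point the paper's one-line proof leaves implicit, but it does not change the argument.
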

\begin{proof}
    From the previous discussion and the fact that $V_{m-1}(\bbL(\zeta)) \subseteq V_m(\bbL(\zeta))$ by Lemma \ref{lem-flag}.
\end{proof}

As a consequence of Lemma \ref{lem-chainvar}, there is a chain of algebraic varieties in $\coC^{|\Theta|}$
\[\cX_0 \subseteq \cX_1 \subseteq \cX_2 \subseteq \ldots \subseteq \cX_M \subseteq \ldots,\]
and by the ideal-variety correspondence \cite{Cox}, a chain of radical ideals
\[\sqrt{I_0}\supseteq \sqrt{I_1}\supseteq \sqrt{I_2} \supseteq \ldots \supseteq \sqrt{I_n} \supseteq \ldots.\]
Moreover, if $\zeta \in \coC^{|\Theta|}$ belongs to $\cX_M\setminus \cX_{M-1}$, then replacing $\Theta$ by $\zeta$ in $\bbL$ we obtain a differential operator $\bbL(\zeta)$ in $\coC\langle \eta\rangle[\partial]$, that has level $M$. Algorithm~\ref{alg:get_ideal} summarizes the steps to compute the variety $\cX_M$ for a fixed value of $M$ and a fixed template of parametric functions $f_2,\ldots,f_n$.

\begin{algorithm}[!ht]
\caption{\texttt{Level\_Variety}}\label{alg:get_ideal}
    \Input{Hyperelliptic differential extension $\coC\langle \eta\rangle$ of $\coC$; a set $\Theta$ of algebraic parameters; $\{f_2,\ldots,f_n\}\subset \coC(\Theta)\langle \eta\rangle$; desired level $M$}
    \Output{Level variety $\cX_M$}

    $S \gets \texttt{Build\_system}(\{f_2,\ldots,f_n\}, J_{n,M})$\;
    $T \gets \mbox{number of rows of } S$\;
    \uIf{$T>|J_{n,M}|$}
    {
    $t \gets |J_{n,M}|$\;
    $I \gets \left(\text{ minors of size $t\times t$ of $S$}\right)$\;
    $\cX\gets V(I)$\;
    }

    \Return  $\cX$\;
\end{algorithm}

The family of algebro-geometric ODOs of level $M$ is defined by
\begin{equation}
   \bbL_M=\{\bbL(\zeta)\mid \zeta\in \cX_M\backslash \cX_{M-1}\}.
\end{equation}
Observe that it could be empty, finite or infinite.

\begin{rem}
   The last step of Algorithm  \ref{alg:get_ideal}, the computation of $\cX_M$ from $I_M$, involves solving a system of non-linear equations defined by polynomials in $\coC[\Theta]$. A study of the system of nonlinear equations defined by $I_M$ determines the family $\bbL_M$. We illustrate this analysis in examples \ref{exm:ideal:3:4:hyperbolic} and \ref{exm:ideal:4:6:rational}, see also Section \ref{sec:families}. We propose the next conjecture, that is left for future research in connection with factorization results of the operators in the family $\bbL_M$.

\medskip

   {\sf Conjecture.} If $(n,M)=1$ then $\cX_M\backslash \cX_{M-1}$ is a finite set of points, otherwise it is a variety of dimension greater than zero.

   \medskip

   If the variety is a finite set of points, its computation can be achieved using polynomial system solvers as
\texttt{HomotopyContinuation.jl} \cite{BT2018} or \texttt{msolve} \cite{BES2021}.
The use of such tools is also a future project.
\end{rem}

\begin{example}[ODOs of level $M=4$ and hyperbolic coefficients]\label{exm:ideal:3:4:hyperbolic}

    We study operators of the form
    $$\bbL =\partial^3 + f_2 \partial + f_3= \partial^3 + \frac{a_2}{\cosh(x)^2}\partial + \frac{a_3\sinh(x)}{\cosh(x)^3},$$
    with set of parameters $\Theta=\{a_2,a_3\}$.
    We apply Algorithm~\ref{alg:get_ideal} with field $\bbC(\Theta)\langle\eta\rangle$, $\eta=\cosh(x)$.
    Observe that the specialization map $$\cE_{\bbL}:\bbC\{u_2,u_3\}\rightarrow \bbC(\Theta)\langle\eta\rangle$$ defined by  $\cE_{\bbL}(u_2) = f_2$ and $\cE_{\bbL}(u_3) =f_3$ is used to build $\hgd_{3,4}(\bbL)$.
    From $\texttt{Build\_system}(\{f_2,f_3\}, J_{3,4})$ in Algorithm \ref{alg:get_system} we obtain $S_{\Theta}(3,4)$
\begin{equation*}
\left(
\begin{array}{ccc}
-3 a_3 & 0& 0\\
2 a_3 & 0& 0\\
0& \frac{-4}{3} a_2^2 + 16 a_2 + 12 a_3 & -16 a_2^2 - \frac{16}{3}  a_2 a_3 + \frac{8}{3} a_3^2 + \frac{320}{3} a_2 + 80 a_3\\
0& \frac{16}{3} a_2 - 4 a_3 &  \frac{-64}{9} a_2 - \frac{16}{3} a_3\\
0& 0 & \frac{-8}{9} a_2^3 + 32a_2^2 + 12 a_2 a_3 - 4 a_3^2 - 160 a_2 - 120 a_3\\
2 a_2& 0& 0\\
0& -6 a_2 - 6 a_3 & \frac{16}{3}a_2^2 + \frac{16}{3}a_2 a_3 - 40 a_2 - 40 a_3\\
0& 4 a_2 + 4 a_3& \frac{16}{3}a_2 + \frac{16}{3} a_3\\
0& 0& \frac{-20}{3} a_2^2 -\frac{20}{3}a_2 a_3 + 40 a_2 + 40 a_3
\end{array}
\right)
\end{equation*}
with 39 non-zero $3\times 3$ minors that generate the level ideal $I_4$. The Gröbner basis of $I_4$ for the degree-reverse lexicographic order of $a_2$ and $a_3$ is
    \[I_4 = \left(a_2^3(a_2-6),\quad a_3^3(a_3+12),\quad  a_3  (2a_2 - a_3)  (2a_2 + a_3),\quad (2a_2 + a_3)  a_3^2\right).\]
This is an easy ideal to analyze, leading to
$$\cX_4 = \{(0,0), (6,0), (6,-12)\}.$$
We also compute $\cX_3=\cX_2$.
The matrix $S_{\Theta}(3,2)$ is the submatrix of $S_{\Theta}(3,4)$ obtained by removing the last column and rows $5$ and $9$. The ideal $I_2$ is generated by all $2\times 2$ minors of the matrix $S_{\Theta}(3,2)$. The corresponding variety is $\cX_2=\{(0,0)\}$. Finally
\[\bbL_4=\{\bbL(6,0), \bbL(6,-12)\}.\]
In examples~\ref{exm:hierarchy:3:4:eval} (the hyperbolic case) and~\ref{exm:centralizer:3:4:hyperbolic} the operator $L=\bbL(6,0)$ is used.
\end{example}

\begin{example}[Level varieties for Rational coefficients]\label{exm:ideal:4:6:rational}
    Let us consider an operator of the form
    \[\bbL =\partial^4 + f_2 \partial^2 + f_3\partial + f_4=\partial^4 + \frac{a_2}{x^2} \partial^2 + \frac{a_3}{x^3}\partial + \frac{a_4}{x^4},\]
    with set of parameters $\Theta=\{a_2,a_3,a_4\}$.
    To analyze the level varieties for different level values  $M=1,\ldots,6$, for $M\neq 4$, we apply Algorithm~\ref{alg:get_ideal}. The call $\texttt{Build\_system}(v_{\bbL},J_{4,M})$ for $v_{\bbL}=\{f_2,f_3,f_4\}$ returns a matrix $S_{\Theta}(4,M)$ with very simple structure: $|J_{4,M}|$ columns and $3|J_{4,M}|$ rows that split into three diagonal blocks
\[S_{\Theta}(4,6)=\begin{pmatrix}
{\rm diag}(4 a_{4},3 a_{2}^{2} - a_{2} a_{3} + 60 a_{2} - 20 a_{4},s_{1,3}, s_{1,4},s_{1,5})\\
{\rm diag}(3 a_{3},-2 a_{2}^{2} - 48 a_{2} - 12 a_{3} + 8 a_{4},s_{2,3}, s_{2,4},s_{2,5})\\
{\rm diag}(2 a_{2},12 a_{2} + 6 a_{3},s_{3,3}, s_{3,4},s_{3,5})
\end{pmatrix},\]
where $s_{i,j}$ belong to $\bbC[\Theta]$.
By Lemma \ref{lem-chainvar} it is guarantied that we have a chain of level varieties
\[
\begin{cases}
\cX_1=\{(0,0,0)\},\\
\cX_2=\cX_1 \cup \left\{\left(\frac{-t}{2}, t, \frac{1}{16}t^2 - \frac{3}{2}t\right)\ :\ t \in \bbC\right\},\\
\cX_3 = \cX_2 \cup \left\{(-8, 8, 0), (-8, 24, -24), (-20, 40, 0), (-4, 8, -8)\right\},\\
\cX_5 =\cX_3 \cup  \left\{(-24, 24, 0), (-24, 72, -72), (-12, 48, -72), (-12, 0, 0), \right.\\
\left.(-36, 24, 144), (-36, 120, 0), (-60, 120, 216), (-28, 56, -56),\right. \\
\left. (-16, 32, 40), (-12, 24, 0)\right\}\}.
\end{cases}.
\]
It can be easily checked that the variety $\cX_2\backslash\cX_1$ are the conditions for $\bbL$ to factor as a square of an operator of order $2$. Finally, for $M=6$ we obtain
\[\cX_6 = \cX_5 \cup \left(\cX_{6,1} \cup \cX_{6,2}\right).\]
where $\cX_{6,1}$ and $\cX_{6,2}$ are varieties defined respectively by the next ideals
\begin{align*}
            I_{6,1} & {}= \left(4 a_{2} a_{3} + a_{3}^{2} + 24 a_{3} + 16 a_{4}, a_{2}^{2} + 32 a_{2} - 6 a_{3} - 4 a_{4} + 240\right),\\
            I_{6,2} & {}= \left(2 a_{2} + a_{3}, a_{3}^{2} - 152 a_{3} - 16 a_{4} + 4480\right)
        \end{align*}
and we could check that they define irreducible curves in $\bbC^3$.
\end{example}

\section{Families of algebro-geometric ODOs}\label{sec:families}

In this section we use Algorithm \ref{alg:get_ideal} to generate families of algebro-geometric differential operators with coefficients in a fixed differential field $\coC\langle \eta\rangle$, for different types of $\eta$ inspired from the solutions of the classical KdV hierarchy. We combine it with Algorithm \ref{alg:goodearl_basis} to compute the centralizers of the ODOs obtained.

\subsection{Hyperbolic coefficients}

Let us continue with Example \ref{exm:ideal:3:4:hyperbolic}
$$\bbL =\partial^3 + \frac{a_2}{\cosh(x)^2}\partial + \frac{a_3\sinh(x)}{\cosh(x)^3},$$
    with set of parameters $\Theta=\{a_1,a_2\}$.
 We run Algorithm~\ref{alg:get_ideal} for values of $M$ from $4$ up to $10$ observing the following: For $M \equiv 2 \pmod 3$, there are no points $\zeta\in \coC^2$ such that $\bbL(\zeta)$ has level $M$. For $M=4,7,10$ we obtain:
\[\cX_4\backslash \cX_2 = \{(-6, 0), (-6, 12)\},\quad \cX_7\backslash \cX_4=\{(-18, -12), (-18, 48)\},\]
\[\cX_{10}\backslash \cX_7=\{(-36, -48), (-36, 120)\}.\]
\begin{table}[t]
	\centering
	$\begin{array}{|c|c|c|c|}
		\hline
		\text{$M$} & \text{$|\bbL_M|$} & \text{Orders} & \text{Found Relations} \\
		\hline
		4 & 2 & (4, 5) & \text{None} \\
		7 & 2 & (7, 8) & \text{None} \\
		{10} & 2 & (10, 11) & \text{None}  \\
		\hline
	\end{array}$
    \label{tab:hyper:3}
    \caption{Centralizers for $n=3$ and variable $M$ on the hyperbolic case.}
\end{table}

For each of these points, we computed  centralizers using Algorithm~\ref{alg:goodearl_basis}. Table~\ref{tab:hyper:3} summarizes the results. The column Orders indicates the orders of the ODOs in the filtered basis $\{1,G^*_1,G^*_2\}$ of the centralizer $\centr (\bbL(\zeta))$, for $\bbL(\zeta)\in \bbL_M$. For instance if $M=10$, given $L=\bbL(-36,-48)$ or $L=\bbL(-36,120)$ then
\[\centr(L)=\bbC[L,G^*_1,G^*_2],\]
where $\ord(G^*_1)=10$ and $\ord(G^*_2)=11$. To access the explicit values of $G^*_1$ and $G^*_2$ see Section \ref{sec:implementation}. The last column Relations Found indicates that no algebraic relations where found between $G^*_1$ and $G^*_2$. Observe that because of their order values no algebraic relations can exist.

\subsection{Elliptic coefficients}

Consider the differential field $\coC \langle \wp(x)\rangle$, with derivation $d/dx$, which fits into the setting of differential hyperelliptic extensions described in Section~\ref{sec:computable}. In this case, we have $\eta = \wp(x)$ and then $\eta'$ is algebraic over $\bbC(\eta)$ since
\[\wp'(x)^2 = 4\wp(x)^3 - g_2\wp(x) - g_3,\,\,\, g_2, g_3\in \bbC.\]
We study operators
$$\bbL = \partial^3 + a_2\eta \partial + a_3 \eta'$$
and compute the points $\zeta=(a_2,a_3)$ in $\bbC^2$ such that $\bbL(\zeta)$ has a non-trivial centralizer of level $M=4$. We run Algorithm~\ref{alg:get_ideal}, obtaining three points:
\[\cX_4\backslash \cX_2 = \mathcal{F}_g \cup \mathcal{F}_0=\left\{ \left(-\frac{3}{2}, 0\right), \left(-\frac{3}{2}, -\frac{3}{2}\right)\right\} \cup  \left\{\left(-\frac{15}{4}, -\frac{15}{8}\right)\right\}.\]
It is important to remark that for $\zeta$ in $\mathcal{F}_0$, $\bbL(\zeta)$ has a non-trivial centralizer and level $4$ in the case $g_2 = 0$, for $\zeta$ in $\mathcal{F}_g$, $g_2$ and $g_3$ are arbitrary.

\begin{table}[h]
	\centering
	$\begin{array}{|c|c|c|c|}
		\hline
		\text{Family} & \text{\# Cases} & \text{Orders} & \text{Found Relations} \\
		\hline
		\mathcal{F}_g & 2 & (4, 5) &  \text{None}\\
		\mathcal{F}_0 & 1 & (4, 8) & G_2^* - G_1^{*2} \\
		\hline
	\end{array}$
    \label{tab:ell:3:4}
    \caption{Structure of the centralizers for $n=3$, $M=7$ on the elliptic case.}
\end{table}

We analyzed the centralizers in all the three cases using Algorithm~\ref{alg:goodearl_basis}. The results are summarized in Table~\ref{tab:ell:3:4}.
As an example, let us choose
$$L=\bbL\left(-\frac{3}{2}, -\frac{3}{2}\right) = \partial^3 - \frac{3}{2}\wp(x)\partial - \frac{3}{2}\wp'(x).$$ In this case, the centralizer is $\centr (L)=\coC[L,G^*_1,G^*_2]$ where
\begin{align*}
    &G^*_1 = -\left(\frac{3}{2} \wp(x)^{2} + \frac{2}{3} g_{2}\right) - \left(3 \wp'(x)\right)\partial - \left(2 \wp(x)\right)\partial^{2} + \partial^{4},\\
    &G^*_2 = -\left(\frac{5}{2} \wp(x) \wp'(x)\right) - \left(5 \wp(x)^{2} + 2 g_{2}\right)\partial - \left(5 \wp'(x)\right)\partial^{2} - \left(\frac{5}{2} \wp(x)\right)\partial^{3} + \partial^{5}.
\end{align*}
In the case
$L=\bbL\left(-\frac{15}{4}, -\frac{15}{8}\right)$, with $g_2=0$, the centralizer is $\centr (L)=\coC[L,G^*_1]$ since one relation was found $G^*_2=G_1^{*2}$.

\subsection{Rational coefficients}

To finish we will generate families of algebro-geometric ODOs with coefficients in the differential field
 $\bbC(x)$, with derivation $\partial=d/dx$. We apply Algorithm~\ref{alg:get_ideal} to an operator $\bbL$ of prime order $n$, fixing a set of algebraic parameters $\Theta=\{a_2,\ldots a_n\}$
\begin{equation}\label{eq-ratfam}
    \bbL= \partial^n + f_2\partial^{n-2}+\ldots+f_n,\,\,\, \mbox{ with } f_j=\frac{a_j}{x^j}.
\end{equation}

We run Algorithm~\ref{alg:get_ideal} for different values of $M$ to obtain families $\bbL_M$ of algebro-geometric ODOs with level $M$.  We analyze the level varieties $\cX_M$ of solutions of the parameters $\Theta$. In this case, of rational coefficients \eqref{eq-ratfam} with $(n,M)=1$, we observe that all the level varieties are finite sets of points.

As we saw in Example~\ref{exm:ideal:4:6:rational}, the system matrix $S_{\Theta}(n,m)$ is always of size $(|\Theta||J_{n,m}|)\times|J_{n,m}|$, built with $|\Theta|$ diagonal blocks of size $|J_{n,m}|\times|J_{n,m}|$. Moreover, we can build the matrix $S_{\Theta}(n,m+1)$ by adding $|\Theta|$ rows and $1$ column to $S_{\Theta}(n,m)$.
    The vector of constants that generates a non-trivial element in the centralizer of order $m$ using the basis $\cB(L)$ (see~\eqref{eq:wilson_basis} or \eqref{eq-ACbasis}) is always $\bc_m=(0,\ldots ,0, 1)$.

\begin{enumerate}
\item {\bf ($n=3$, $M=7$)} We run Algorithm \ref{alg:get_ideal} with input $\Theta=\{a_2,a_3\}$, $\{f_2= \frac{a_2}{x^2},f_3=\frac{a_3}{x^3}\}$ and $M=7$. The output is
    \[\cX_7 = \cX_6 \cup \mathcal{F}_1 \cup \mathcal{F}_2 \cup \mathcal{F}_3,\]
    where \[\mathcal{F}_1 = \{(-18, -12), (-18, 48)\},\quad\mathcal{F}_2=\{(-30, 0), (-30, 60)\},\quad\mathcal{F}_3 = \{(-48, 48)\}.\]
The output of Algorithm \ref{alg:get_ideal} with  $M=6$ is $\cX_6=\cX_5$ with
\[
\begin{cases}
\cX_1=\{(0,0)\},\\
\cX_2=\cX_1\cup\{(-3,3)\},\\
\cX_4=\cX_2\cup\{(-6,12),(-15,15),(-6,0)\},\\
\cX_5=\cX_4\cup\{(-24,24),(-12,24),(-12,0)\}.
\end{cases}
\]
    \begin{table}[b]
	\centering
	$\begin{array}{|c|c|c|c|}
                \hline
                \text{Family} & \text{\# Cases} & \text{Orders} & \text{Found Relations} \\
                \hline
                \mathcal{F}_1 & 2 & (7, 8) &  \text{None}\\
                \mathcal{F}_2 & 2 & (7, 11) &  \text{None}\\
                \mathcal{F}_3 & 1 & (7, 14) & G_2^* - G_1^{ *2} \\
                \hline
        \end{array}$
    \label{tab:rat:3:7}
    \caption{Structure of the centralizers for $n=3$, $M=7$ on the rational case.}
\end{table}

    For each of the points in $\cX_7\backslash \cX_5$, we computed the filtered basis of the centralizer with Algorithm \ref{alg:goodearl_basis}. Table~\ref{tab:rat:3:7} summarizes the results.

    Consider the  algebro-geometric operator
    $$L=\bbL(-48,48) = \partial^3 -\frac{48}{x^2}\partial + \frac{48}{x^3}.$$ The  filtered basis of the centralizer $\{1, G^*_1,G^*_2\}$ contains operators of orders $7$ and $14$. In this case the centralizer is $\centr (L)=\bbC[L,G^*_1]$, with
\begin{itemize}
    \item $G^*_1 = \left(\frac{24640}{x^{6}}\right)\partial + \left(\frac{-13440}{x^{5}}\right)\partial^{2} + \left(\frac{1120}{x^{4}}\right)\partial^{3} + \left(\frac{560}{x^{3}}\right)\partial^{4} + \left(\frac{-112}{x^{2}}\right)\partial^5 + \partial^7,$
    \item $G^*_2 = (G^*_1)^2$.
\end{itemize}

    Similarly, we computed the centralizer of $$L=\bbL(-18,-12)=\partial^3 - \frac{18}{x^2}\partial - \frac{12}{x^3}$$
   and obtained $\centr (L)=\bbC[L,G^*_1,G^*_2]$ with
    \begin{itemize}
        \item $G^*_1 = -\left(\frac{1120}{x^{7}}\right) + \left(\frac{1120}{x^{6}}\right)\partial - \left(\frac{560}{x^{5}}\right)\partial^{2} + \left(\frac{140}{x^{3}}\right)\partial^{4} - \left(\frac{42}{x^{2}}\right)\partial^{5} + \partial^{7},$
        \item $G^*_2 = \left(\frac{7040}{x^{8}}\right) - \left(\frac{7040}{x^{7}}\right)\partial + \left(\frac{3520}{x^{6}}\right)\partial^{2} - \left(\frac{640}{x^{5}}\right)\partial^{3} - \left(\frac{240}{x^{4}}\right)\partial^{4} + \left(\frac{208}{x^{3}}\right)\partial^{5} - \left(\frac{48}{x^{2}}\right)\partial^{6} + \partial^{8}$.
    \end{itemize}

    \item {\bf ($n=5$, $M=6$)} We run Algorithm \ref{alg:get_ideal} with input $\Theta=\{a_1,a_2\}$, $\{f_j=\frac{a_j}{x^j}\mid j=2,\ldots , 5\}$ and $M=6$. The output is
    \[\cX_6 = \cX_4 \cup \left(\bigcup_{i=1}^{13}\mathcal{F}_i\right),\]
    where $|\cX_6\setminus \cX_4| = 30$
    distributed in $13$ families of points gathered according to the structure of the centralizer. We computed the centralizers for each of the operators in
    \[\bbL_6=\{\bbL(\zeta)\mid \zeta\in \cX_6\backslash\cX_4\}.\]
    The results are summarized in Table~\ref{tab:rat:5:6}.
    \[\begin{array}{rcl}
        \mathcal{F}_1 & = & \left\{(-20, 0, 0, 0), (-25, 55, 70, -70), (-30, 60, 180, 0),\right. \\ & & \left.(-25, 95, -50, -270), (-30, 120, 0, -720), (-20, 120, -360, 480)\right\}\\
        \mathcal{F}_2 & = & \left\{(-35, 35, 0, 0), (-35, 175, -420, 420)\right\}\\
        \mathcal{F}_3 & = & \left\{(-40, 80, -80, 0), (-40, 160, -320, 320)\right\}\\
        \mathcal{F}_4 & = & \left\{(-60, 240, 0, 0), (-60, 120, 360, -1440)\right\}\\
        \mathcal{F}_5 & = & \left\{(-55, 245, -245, 0), (-55, 85, 235, -640)\right\}\\
        \mathcal{F}_6 & = & \left\{(-50, 40, 240, -240), (-50, 260, -420, 0), (-55, 175, 280, -280), \right.\\ & &\left.(-55, 155, 340, -1620)\right\}\\
        \mathcal{F}_7 & = & \left\{(-45, 135, -270, 270)\right\}\\
        \mathcal{F}_8 & = & \left\{(-65, 15, 630, 330), (-65, 375, -450, -1470), (-80, 240, 1200, -2880)\right\}\\
        \mathcal{F}_9 & = & \left\{(-100, 200, 880, 0), (-100, 400, 280, -3520)\right\}\\
        \mathcal{F}_{10} & = & \left\{(-85, 255, 130, -770)\right\}\\
        \mathcal{F}_{11} & = & \left\{(-95, 125, 1155, 0), (-95, 445, 195, -3840)\right\}\\
        \mathcal{F}_{12} & = & \left\{(-125, 175, 2730, -2730), (-125, 575, 1530, -7290)\right\}\\
        \mathcal{F}_{13} & = & \left\{(-175, 525, 3955, -8960)\right\}
    \end{array}\]

    \begin{table}[ht]
	\centering
	$\begin{array}{|c|c|c|c|}
                \hline
                \text{Family} & \text{\# Cases} & \text{Orders} & \text{Found Relations} \\
                \hline
                \mathcal{F}_1 & 6 & (6, 7, 8, 9) &\text{None} \\
                \mathcal{F}_2 & 2 & (6, 8, 9, 12) & G_4^* - G_1^{ *2} \\
                \mathcal{F}_3 & 2 & (6, 7, 9, 13) & G_4^* - G_1^{ *}G_2^{ *} \\
                \mathcal{F}_4 & 2 & (6, 7, 13, 14) & G_3^* - G_1^{ *}G_2^{ *}, G_4^* - G_2^{ *2} \\
                \mathcal{F}_5 & 2 & (6, 8, 12, 14) & G_3^* - G_1^{ *2}, G_4^* - G_1^{ *}G_2^{ *} \\
                \mathcal{F}_6 & 4 & (6, 9, 12, 13) & G_3^* - G_1^{ *2} \\
                \mathcal{F}_7 & 1 & (6, 7, 8, 14) & G_4^* - G_1^{ *}G_3^{ *} \\
                \mathcal{F}_8 & 3 & (6, 12, 13, 14) & G_2^* - G_1^{ *2} \\
                \mathcal{F}_9 & 2 & (6, 12, 13, 19) & G_2^* - G_1^{ *2}, G_4^* - G_1^{ *}G_3^{ *} \\
                \mathcal{F}_{10} & 1 & (6, 9, 12, 18) & G_3^* - G_1^{ *2}, G_4^* - G_1^{ *3} \\
                \mathcal{F}_{11} & 2 & (6, 12, 14, 18) & G_2^* - G_1^{ *2}, G_4^* - G_1^{ *3} \\
                \mathcal{F}_{12} & 2 & (6, 12, 18, 19) & G_2^* - G_1^{ *2}, G_3^* - G_1^{ *3} \\
                \mathcal{F}_{13} & 1 & (6, 12, 18, 24) & G_2^* - G_1^{ *2}, G_3^* - G_1^{ *3}, G_4^* - G_1^{ *4} \\
                \hline
        \end{array}$
    \label{tab:rat:5:6}
    \caption{Structure of the centralizers for $n=5$, $M=6$ on the rational case.}
\end{table}

     As an example, let us consider a point $\zeta$ of  $\mathcal{F}_5$, corresponding to the algebro-geometric ODO
    \[L=\bbL(-55,85,235,-640) = \partial^5 - \frac{55}{x^2}\partial^3 + \frac{85}{x^3}\partial^2 + \frac{235}{x^4}\partial - \frac{640}{x^5}.\]
    In this case, since $n=5$, the filtered $\bbC[L]$-basis is $\{1, G^*_1, G^*_2, G^*_3, G^*_4\}$ with orders $6, 8, 12, 14$ respectively. For details
    \begin{itemize}
        \item $G^*_1 = \left(\frac{1584}{x^{6}}\right) + \left(\frac{-1584}{x^{5}}\right)\partial + \left(\frac{360}{x^{4}}\right)\partial^{2} + \left(\frac{168}{x^{3}}\right)\partial^{3} + \left(\frac{-66}{x^{2}}\right)\partial^{4} + \partial^{6}$,
        \item $G^*_2 = \left(\frac{34944}{x^{8}}\right) + \left(\frac{-34944}{x^{7}}\right)\partial + \left(\frac{17472}{x^{6}}\right)\partial^{2} + \left(\frac{-5824}{x^{5}}\right)\partial^{3} + \left(\frac{160}{x^{4}}\right)\partial^{4} + \left(\frac{400}{x^{3}}\right)\partial^{5} + \left(\frac{-88}{x^{2}}\right)\partial^{6} + \partial^{8}$,
        \item $G^*_3 = (G^*_1)^2$,
        \item $G^*_4 = G^*_1 G^*_2$.
    \end{itemize}
The centralizer is $\centr (L)=\bbC[L,G^*_1,G^*_2]$.
\end{enumerate}

\section{Implementation and experimentation}\label{sec:implementation}

The results presented in this paper have been implemented in the SageMath package \texttt{dalgebra}. This package provides tools for working with differential algebra, including computations related to hyperelliptic differential extensions (see Definition~\ref{def-elliptic}). The package is still under active development, and we provide the implementation of all algorithms starting from version 0.0.7, available at:
\begin{center}
    \url{https://github.com/Antonio-JP/dalgebra/tree/v0.0.7}
\end{center}

In order to ensure reproducibility and verification of our theoretical results, we provide a Jupyter Notebook in the repository, where we have detailed the necessary code to execute the examples in this paper. This notebook contains a section for each example including the corresponding code. Besides reproducing the examples on this paper, this notebook also provides code examples to use of the package \texttt{dalgebra}. The notebook, called \emph{EffectiveCentralizers\_PaperExamples.ipynb}, can be found in the following link:
\begin{center}
    \url{https://github.com/Antonio-JP/dalgebra/blob/v0.0.7/notebooks/EffectiveCentralizers_PaperExamples.ipynb}
\end{center}

Recall that in Section~\ref{sec:families} we presented a systematic set of computations for linear differential operators with rational, hyperbolic and elliptic coefficients. These results were computed using the aforementioned software package \texttt{dalgebra} (version 0.0.7). The code to reproduce these examples can be found in the folder \emph{experiments/centralizers}. There, the file \texttt{script.sage} can be found, with the SageMath code that takes the values $n$, $m$, and a family of operators (namely, \texttt{"rational"}, \texttt{"hyperbolic"} or \texttt{"elliptic"}) and computes all the possible operators of order $n$ with a non-trivial centralizer of level $m$ for the given family.

While writing this paper, we have executed this script in SageMath~\cite{sagemath} on a computer with the following specifications:
\begin{itemize}
    \item Processor: Intel(R) Xeon(R) W-3323 CPU @ 3.50GHz
    \item RAM: 50GB
    \item Operating System: AlmaLinux 9.5 (Teal Serval)
    \item SageMath Version: 10.5
\end{itemize}

For each execution, four different files have been created in the subfolder \emph{experiments/centralizers/results}:
\begin{itemize}
    \item A file \texttt{<family>\_$n$\_$m$\_matrix.md}, with the matrix $S_\Theta(n,m)$. This file can be executed in Maple and SageMath to generate the system matrix in both Computer Algebra systems.
    \item A file \texttt{<family>\_$n$\_$m$\_rows.md}, contains a list of tuples $(k, t)$ where $k \in \mathbb{N}$ and $t$ are monomials. These tuples index each row in the \emph{matrix} file. Namely, if the $i$th tuple in the file is $(k,t)$, then the $i$th row of the matrix was the coefficient of monomial $t$ from the $k$-th equation in the system $\hgd_{n,m}(\bbL)$.
    \item A file \texttt{<family>\_$n$\_$m$\_report.md}, contains a summary of all the computations. More precisely, it lists and analyzes all the operators of order $n$ and level $m$ and their centralizers. It finishes by summarizing all cases splitting them into families regarding the orders of the ODOs in the filtered basis.
    \item A file \texttt{<family>\_$n$\_$m$\_table.tex}, contains the TeX code for the tables displayed in this paper. They were generated from the \emph{report} file.
\end{itemize}

\noindent{\bf Acknowledgements.} All authors are partially supported by the grant PID2021-124473NB-I00, ``Algorithmic Differential Algebra and Integrability" (ADAI)  from the Spanish MCIN/AEI /10.13039/501100011033 and by FEDER, UE.

\bigskip
\bibliographystyle{acm}
\bibliography{Bibliography}

\begin{thebibliography}{10}

\bibitem{BES2021}
{\sc Berthomieu, J., Eder, C., and Safey El~Din, M.}
\newblock msolve: a library for solving polynomial systems.
\newblock {\em In: ISSAC’21: Proceedings of the 2021 International Symposium on Symbolic and Algebraic Computation\/} (2021), 51–58.

\bibitem{BT2018}
{\sc Breiding, P., and Timme, S.}
\newblock Homotopycontinuation.jl: a package for homotopy continuation in julia.
\newblock {\em In: International Congress on Mathematical Software. Springer\/} (2018), 458--465.

\bibitem{Bron}
{\sc Bronstein, M.}
\newblock {\em {Symbolic integration I: Transcendental functions (Vol. 1)}}.
\newblock Springer Science \& Business Media. Springer, 2013.

\bibitem{BC}
{\sc Burchnall, J., and Chaundy, T.}
\newblock Commutative ordinary differential operators.
\newblock {\em Proc. Lond. Math. Soc. s2-21\/} (1923), 420--440.

\bibitem{Cox}
{\sc Cox, D., Little, J., and O'Shea, D.}
\newblock {\em {Ideals, Varieties, and Algorithms. An Introduction to Computational Algebraic Geometry and Commutative Algebra}}.
\newblock Undergraduate Texts in Mathematics. Springer International Publishing, New York, 2007.

\bibitem{Dikii}
{\sc Dickey, L.~A.}
\newblock {\em {Soliton equations and Hamiltonian systems}}, vol.~26.
\newblock World Scientific, 2003.

\bibitem{DGU}
{\sc Dickson, R., Gesztesy, F., and Unterkofler, K.}
\newblock A new approach to the {B}oussinesq hierarchy.
\newblock {\em Math. Nachr. 198\/} (1999), 51--108.

\bibitem{DrinfeldSokolov1985}
{\sc Drinfel’d, V.~G., and Sokolov, V.~V.}
\newblock {Lie algebras and equations of Korteweg-de Vries type}.
\newblock {\em Journal of Soviet Mathematics 30}, 2 (July 1985), 1975–2036.

\bibitem{GGKM}
{\sc Gardner, C.~S., Greene, J.~M., Kruskal, M.~D., and Miura, R.~M.}
\newblock Method for solving the korteweg-devries equation.
\newblock {\em Physical Review Letters 19}, 19 (Nov. 1967), 1095–1097.

\bibitem{Good}
{\sc Goodearl, K.}
\newblock Centralizers in differential, pseudo-differential and fractional differential operator rings.
\newblock {\em Rocky Mountain J. Math. 13}, 4 (1983), 573--618.

\bibitem{Gri}
{\sc Grinevich, P.}
\newblock Rational solutions for the equation of commutation of differential operators.
\newblock {\em Funct. Anal. Appl. 16}, 1 (1982), 15--19.

\bibitem{JRZHD2025}
{\sc Jiménez-Pastor, A., Rueda, S.~L., Zurro, M.~A., Heredero, R.~H., and Delgado, R.}
\newblock {Computing almost commuting bases of ODOs and Gelfand-Dikey hierarchies}.
\newblock {\em To appear in Mathematics in Computer Sciences\/} (2025).

\bibitem{Kolchin}
{\sc Kolchin, E.~R.}
\newblock {\em Differential algebra and algebraic groups}.
\newblock No.~54 in Pure and Applied Mathematics. Academic Press, Boston, MA, 1973.

\bibitem{Krichever}
{\sc Krichever, I.~M.}
\newblock Rational solutions of {K}adomtsev-{P}etviashvili equation and integrable systems of n particles on a line.
\newblock {\em Funct. Anal. Appl. 12\/} (1978), 59--61.

\bibitem{Mi1}
{\sc Mironov, A.~E.}
\newblock Self-adjoint commuting ordinary differential operators.
\newblock {\em Invent. Math. 197}, 2 (2014), 417--431.

\bibitem{Mokhov_1990}
{\sc Mokhov, O.~I.}
\newblock Commuting differential operators of rank 3, and nonlinear differential equations.
\newblock {\em Mathematics of the {USSR}-Izvestiya 35}, 3 (jun 1990), 629--655.

\bibitem{MRZ1}
{\sc Morales-Ruiz, J.~J., Rueda, S., and Zurro, M.}
\newblock {Factorization of KdV Schr\" odinger operators using differential subresultants}.
\newblock {\em Adv. Appl. Math. 120\/} (2020), 102065.

\bibitem{MRZ2}
{\sc Morales-Ruiz, J.~J., Rueda, S., and Zurro, M.}
\newblock {Spectral {P}icard–{V}essiot fields for {A}lgebro-geometric Schr\" odinger operators}.
\newblock {\em Annales de l'Institut Fourier 71}, 3 (2021), 1287--1324.

\bibitem{Mu2}
{\sc Mulase, M.}
\newblock Geometric classification of commutative algebras of ordinary differential operators.
\newblock {\em J. Diff. Geom. 19\/} (1990), 13--27.

\bibitem{Og2018}
{\sc Oganesyan, V.~S.}
\newblock Alternative proof of mironov's results on commuting shelf-adjoint operators of rank 2.
\newblock {\em Siberian Mathematical Journal 59}, 1 (2018), 102–106.

\bibitem{Olver}
{\sc Olver, P.}
\newblock {\em Applications of {L}ie groups to differential equations}, vol.~107 of {\em GTM}.
\newblock Springer, n, 1986.

\bibitem{PZ}
{\sc Pogorelov, D.~A., and Zheglov, A.~B.}
\newblock An algorithm for construction of commuting ordinary differential operators by geometric data.
\newblock {\em Lobachevskii Journal of Mathematics 38}, 6 (Nov. 2017), 1075–1092.

\bibitem{PRZ2019}
{\sc Previato, E., Rueda, S.~L., and Zurro, M.~A.}
\newblock Commuting {O}rdinary {D}ifferential {O}perators and the {D}ixmier {T}est.
\newblock {\em {SIGMA Symmetry Integrability Geom. Methods Appl.} 15}, 101 (2019), 23 pp.

\bibitem{Previato2019}
{\sc Previato, E., Rueda, S.~L., Zurro, M.-A., and and}.
\newblock {Commuting Ordinary Differential Operators and the Dixmier Test}.
\newblock {\em Symmetry, Integrability and Geometry: Methods and Applications\/} (2019).

\bibitem{Ritt}
{\sc Ritt, J.}
\newblock {\em Differential algebra}, vol.~33.
\newblock American Mathematical Soc., 1950.

\bibitem{R2025arx}
{\sc Rueda, S.~L.}
\newblock On the classification of centralizers of odos: An effective differential algebra approach, 2025.

\bibitem{RZ2024}
{\sc Rueda, S.~L., and Zurro, M.-A.}
\newblock {Spectral Curves for Third-Order ODOs}.
\newblock {\em Axioms 13}, 4 (Apr. 2024), 274.

\bibitem{Sch}
{\sc Schur, I.}
\newblock \"{U}ber vertauschbare lineare {D}ifferentialausdr\"ucke.
\newblock {\em Berlin Math. Gesellschaft, Sitzungsbericht. Arch. der Math., Beilage 3}, 8 (1904), 2--8.

\bibitem{SW}
{\sc Segal, G., and Wilson, G.}
\newblock Loop groups and equations of {K}d{V} type.
\newblock {\em Publications mathématiques de l’IHÉS 61}, 1 (Dec. 1985), 5–65.

\bibitem{sagemath}
{\sc {The Sage Developers}}.
\newblock {\em {S}ageMath, the {S}age {M}athematics {S}oftware {S}ystem ({V}ersion 10.5)}, 2024.
\newblock {\tt https://www.sagemath.org}.

\bibitem{Veselov}
{\sc Veselov, A.~P.}
\newblock On darboux-treibich-verdier potentials.
\newblock {\em Lett. Math. Phys. 96}, 1 (2011), 209--216.

\bibitem{We}
{\sc Weikard, R.}
\newblock On commuting differential operators.
\newblock {\em Electron. J. Differential Equations 2000}, 19 (2000), 1--11.

\bibitem{Wilson}
{\sc Wilson, G.}
\newblock Algebraic curves and soliton equations.
\newblock In {\em Geometry {T}oday}, vol.~60 of {\em Progr. Math.} Birkh\"auser, Boston, 1985, pp.~303--329.
\newblock E. Arbarello et al. ed.

\bibitem{Wilson1985}
{\sc Wilson, G.}
\newblock Algebraic curves and soliton equations.
\newblock In {\em Geometry {T}oday\/} (1985), {E. Arbarello et al.}, Ed., Birkh\"auser, Boston, pp.~303--329.

\bibitem{Zheglov}
{\sc Zheglov, A.}
\newblock {\em Algebra, Geometry and Analysis of Commuting Ordinary Differential Operators.}
\newblock {Moscow State University, Moscow}, 2020.

\end{thebibliography}



\end{document}